\newtheorem{pro}{Proposition}[section]
\newtheorem{teo}[pro]{Theorem}
\newtheorem{lema}[pro]{Lemma}
\newtheorem{coro}[pro]{Corollary}
\theoremstyle{definition}
\newtheorem{defi}[pro]{Definition}
\newtheorem{rema}[pro]{Remark}
\numberwithin{equation}{section}
\def \K{\mathcal{K}}
\def \F{\mathcal{F}}
\def \T{\overline{T}}
\newcommand{\w}{\wedge}
\DeclareMathOperator{\li}{\mathcal{L}}
\newcommand{\tr}{\textsl{tr}\,}
\newcommand{\bb}{\mathbb}
\DeclareMathOperator{\Ric}{\textsl{Ric}\,}
\def \E {\mathcal{E}}
\def \T{T^{\prime}}
\def \i {\lrcorner}
\def \g{\mathfrak{g}}
\newcommand{\End}{\operatorname{End}}
\def\Ric{\mathrm{Ric}}
\newcommand{\lie}[1]{\mathfrak{#1}}
\newcommand{\Lie}[1]{\textsl{#1}}
\DeclareMathOperator{\SU}{\Lie{SU}}
\DeclareMathOperator{\U}{\Lie{U}}
\DeclareMathOperator{\su}{\lie{su}}
\DeclareMathOperator{\aut}{\lie{aut}}
\DeclareMathOperator{\m}{\lie{m}}
\newcommand{\h}{\lie{h}}
\DeclareMathOperator{\di}{d}
\DeclareMathOperator{\Ker}{Ker}
\DeclareMathOperator{\im}{Im}
\renewcommand{\Re}{\textsl{Re\,}}
\DeclareMathOperator{\vol}{vol}
\newcommand{\Kk}{\mathcal{K}}
\newcommand{\ba}{\begin{array}}\newcommand{\ea}{\end{array}}
\renewcommand{\&}{{\footnotesize \&}}
\def\beq{\begin{equation}}
\def\eeq{\end{equation}}
\def\bea{\begin{eqnarray*}}
\def\eea{\end{eqnarray*}}
\def\beaa{\begin{eqnarray}}
\def\eeaa{\end{eqnarray}}
\def\ba{\begin{array}}
\def\ea{\end{array}}
\def\Sym{\mathrm{Sym}}
\def\T{\mathrm{T}}
\def\id{\mathrm{id}}
\def\scal{\mathrm{scal}}
\def\la{\langle}
\def\ra{\rangle}
\def\Cc{\mathcal{C}}
\def\spann{\mathrm{span}}
\def\Hom{\mathrm{Hom}}
\def\tr{\mathrm{tr}}
\def\sideremark#1{\ifvmode\leavevmode\fi\vadjust{\vbox to0pt{\vss
 \hbox to 0pt{\hskip\hsize\hskip1em
 \vbox{\hsize2.5cm\tiny\raggedright\pretolerance10000
 \noindent #1\hfill}\hss}\vbox to8pt{\vfil}\vss}}}%
\begin{document} 
\title[]{Deformations of nearly $G_2$ structures}
\subjclass[2000]{53C10,53C25}
\keywords{Deformations and rigidity of nearly $G_2$ structures, Aloff-Wallach space}
\author[P.-A. Nagy]{Paul-Andi Nagy} 
\address[Paul-Andi Nagy]{Institut f\"ur Geometrie und Topologie, Fachbereich Mathematik, Universit\"at Stuttgart, Pfaffenwaldring 57, 70569 Stuttgart, Germany}
\email{paul-andi.nagy@mathematik.uni-stuttgart.de}
\author[U. Semmelmann]{Uwe Semmelmann}
\address[Uwe Semmelmann]{Institut f\"ur Geometrie und Topologie, Fachbereich Mathematik, Universit\"at Stuttgart, Pfaffenwaldring 57, 70569 Stuttgart, Germany}
\email{Uwe.Semmelmann@mathematik.uni-stuttgart.de}
\date{\today} 
\begin{abstract}
We describe the second order obstruction to deformation for nearly $G_2$ structures on compact manifolds. Building on work of B.Alexandrov and U.Semmelmann this allows proving rigidity under deformation for the proper nearly $G_2$ structure on the Aloff-Wallach space $N(1,1)$.
\end{abstract}
\maketitle
\section{Introduction}
Consider a compact oriented manifold $(M^7,\vol)$. A $G_2$ structure on $M$ is a $3$-form $\varphi$ on $M$ which is 
stable in the sense of \cite{Hi} and compatible with the orientation choice. Such a structure induces in an unique way a Riemannian 
metric $g_{\varphi}$ on $M$, with respect to which we consider $\psi:=\star_{g_{\varphi}}\varphi$. The $G_2$ structure is called (strictly) 
nearly $G_2$ provided that 
\begin{equation} \label{str-eqn}
\di\!\varphi=\tau_0 \psi
\end{equation}
for some 
$\tau_0 \in \mathbb{R}^{\times}.$  It is a well established result \cite{BFK} that nearly $G_2$ structures are in $1:1$ correspondence with Riemannian metrics in dimension $7$ admitting Killing spinors. In particular $g_{\varphi}$ is an Einstein metric of positive scalar 
curvature, a fact which further drives the research in this area. The nearly $G_2$ structure is called {\it{proper}} if
$\aut(M,g_{\varphi})\subseteq \aut(M,\varphi)$; equivalently $g_{\varphi}$ is required to admit a one-dimensional space of Killing spinors. 
The main classes of examples known are 
\begin{itemize}
\item[$\bullet$] homogeneous, as classified in \cite{FKMS}, including the Aloff-Wallach spaces $N(k,l)$
\item[$\bullet$] obtained from a  canonical variation \cite{FKMS}, \cite{GS} of a $3$-Sasaki metric in dimension $7$. 
\end{itemize}
A distinguished r\^ole is played by the Aloff-Wallach space $N(1,1)$ which supports   a $3$-Sasaki metric and two nearly-$G_2$ structures  of proper type \cite{Ca}. 

The deformation theory of (proper) nearly $G_2$ structures, which is a potential tool for producing new examples is the main focus 
in this paper. Some evidence in this direction is supported by the fact that $3$-Sasaki metrics in dimension $7$ containing $\mathbb{T}^3$ in their automorphism group can be deformed to Sasaki-Einstein structures \cite{coev2}. According to \cite{AlS}, infinitesimal deformations of nearly $G_2$ structures correspond to the kernel $\F_4$ of $\Delta^{g_{\varphi}}-\tau_0^2$ acting on $\Omega^4_{27}(M,g_{\varphi}) \cap 
\ker \di$. Those are actually deformations which are normalised to lie, up to the action of the diffeomorphism group, in the Ebin slice for Riemannian metrics on $M$. We consider the cubic polynomial 
\begin{equation} \label{K1}
\mathbb{K}:\mathcal{F}_4 \to \mathcal{F}^*_4, \ \mathbb{K}(\alpha) \gamma :=\int_M Q_2(\alpha) \wedge \gamma.  
\end{equation}
Here $Q_2$ is the quadratic form associated to an explicit bilinear form $b_2:\Lambda^4_{27} \times \Lambda^4_{27} \to\Lambda^3_{27}$ between $G_2$-representation spaces.

The main result result of this paper is the following 
\begin{teo} \label{main}
Let $(M^7,\varphi)$ be a compact manifold equipped with a proper nearly $G_2$ structure. The set of infinitesimal deformations which are unobstructed to second order is parametrised by $\mathbb{K}^{-1}(0)$.
\end{teo}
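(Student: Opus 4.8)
The plan is to set up a Kuranishi-type obstruction analysis for the nearly $G_2$ equation $\di\varphi = \tau_0\star_{\varphi}\varphi$. Starting from a fixed proper nearly $G_2$ structure $\varphi$ and an infinitesimal deformation $\alpha \in \F_4$, I would look for a genuine curve $\varphi(t) = \varphi + t\alpha + t^2\beta + O(t^3)$ of nearly $G_2$ structures, normalised to stay in the Ebin slice (so that we kill the diffeomorphism freedom from the start and all forms stay coclosed in the appropriate sense). The strategy is to expand the defining equation $\di\varphi(t) = \tau_0(t)\star_{\varphi(t)}\varphi(t)$ in powers of $t$. The order-$t^0$ term is the background equation; the order-$t^1$ term is precisely the linearised equation whose solution space is $\F_4 = \ker(\Delta^{g_\varphi} - \tau_0^2)\cap\Omega^4_{27}\cap\ker\di$, by \cite{AlS}. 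The crux is the order-$t^2$ term: it has the schematic form $L\beta = N(\alpha)$, where $L$ is the linearisation operator (essentially $\di$ composed with a projection, whose relevant mapping properties are governed by $\Delta^{g_\varphi} - \tau_0^2$ on the $\Omega^4_{27}$ component) and $N(\alpha)$ is a quadratic expression in $\alpha$ coming from the second variation of both $\star_\varphi\varphi$ and of the metric-dependent Hodge star, together with the variation of $\tau_0$.

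The second step is to identify $N(\alpha)$ explicitly as a $G_2$-equivariant quadratic map $\F_4 \to \Omega^\bullet(M)$ and to determine the cohomological condition for solvability of $L\beta = N(\alpha)$. Since $L$ is (up to lower-order and projection terms) self-adjoint with kernel containing $\F_4$, the equation is solvable for $\beta$ if and only if $N(\alpha)$ is $L^2$-orthogonal to $\F_4$, i.e. $\int_M \langle N(\alpha), \gamma\rangle\,\vol = 0$ for all $\gamma \in \F_4$. The content of the theorem is that, after unwinding the pointwise algebra, this orthogonality condition is exactly $\mathbb{K}(\alpha) = 0$ in $\F_4^*$: one must show that the quadratic part $N(\alpha)$, when paired against test forms in $\F_4$, reduces to $\int_M Q_2(\alpha)\wedge\gamma$, with $Q_2$ the quadratic form attached to the bilinear map $b_2:\Lambda^4_{27}\times\Lambda^4_{27}\to\Lambda^3_{27}$. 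This is where the pointwise $G_2$-representation theory enters: one decomposes the a priori quadratic expression into its isotypic components, uses that $\alpha\in\Omega^4_{27}$ and $\gamma\in\Omega^3_{27}$ (via the Hodge star identification $\Omega^4_{27}\cong\Omega^3_{27}$), and checks that only the $\Lambda^3_{27}$-valued part $b_2$ survives the pairing, the other components either vanishing by type or integrating to zero using $\di\alpha = 0$ and $\di\gamma$-type relations and Stokes' theorem.

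The final step is the Kuranishi/formal-integrability bookkeeping: having shown that $\mathbb{K}(\alpha)=0$ is necessary and sufficient for the order-$t^2$ term to be solvable, one concludes that the map $\alpha\mapsto\mathbb{K}(\alpha)$ is precisely the second-order obstruction, so the set of infinitesimal deformations unobstructed to second order is $\mathbb{K}^{-1}(0)$. One should also verify that the normalisations (choice of Ebin slice, adjustment of $\tau_0$ at each order, and the freedom of adding elements of $\F_4$ to $\beta$) do not change the value of $\mathbb{K}(\alpha)$ — this is where one checks that $\mathbb{K}$ is well-defined as a cubic polynomial on $\F_4$ independent of auxiliary choices, which follows because $\mathbb{K}(\alpha)\gamma$ is manifestly the integral of a fixed universal polynomial expression in $\alpha$ and $\gamma$.

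I expect the main obstacle to be the second step: carrying out the second variation of $\star_{\varphi(t)}\varphi(t)$ — which involves both the change of the $3$-form and the change of the induced metric and hence of the Hodge operator — and then extracting from the resulting messy quadratic expression exactly the piece $b_2(\alpha,\alpha)\in\Lambda^3_{27}$ after projection and integration by parts. The algebraic identities for $G_2$-structures (contractions of $\varphi$ and $\psi$, the various projection formulas onto $\Lambda^k_{\mathbf{j}}$) must be marshalled carefully, and one has to confirm that all the "unwanted" representation components of $N(\alpha)$ pair trivially with $\F_4$, some by pure type considerations and others only after invoking $\di\alpha=0$ and Stokes. Establishing that $b_2$ is the \emph{only} surviving bilinear form — i.e. the reduction is clean — is the technical heart of the argument.
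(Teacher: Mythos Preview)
Your overall Kuranishi strategy is right, but there is a genuine gap in the cokernel analysis. You assert that since $L$ is ``(up to lower-order and projection terms) self-adjoint with kernel containing $\F_4$'', solvability of $L\beta=N(\alpha)$ reduces to $N(\alpha)\perp\F_4$. This is where the argument would fail. In the paper's set-up the governing first-order operator is $D:\Omega^4\to\Omega^4$, $D\alpha=\di\widehat{\alpha}-\tau_0\alpha$, with $\alpha\mapsto\widehat\alpha$ the linearised Hitchin duality map. This operator is subelliptic but \emph{not} self-adjoint on $\Omega^4(M)$; it becomes self-adjoint only after restriction to a subspace $\F\subset\Omega^4_{27}$ singled out by a \emph{modified} Ebin slice condition ($\di^\star_7\alpha\in\mathcal{K}\lrcorner\psi$ rather than $\delta S=0$). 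Consequently $\ker(D^\star)$ is strictly larger than $\F_4$: it contains an additional piece parametrised by co-closed vector fields in $\mathcal{K}^\perp$, and the true obstruction reads
\[
Q_2(\psi_1)\ \perp\ \di^\star\ker(D^\star)=\{\di^\star(Y\wedge\varphi):Y\in\mathcal{K}^\perp,\ \di^\star Y=0\}\oplus\F_3.
\]

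The step you have not anticipated is showing that orthogonality to the first summand is \emph{automatic}. This is not a type argument: it rests on the differential identity $\di_7(Q_2(\alpha))=\tfrac14\,\di|\alpha|^2\wedge\varphi$ for $\alpha\in\F_4$, obtained by differentiating the algebraic defining relation for $Q_2$ and using $D\alpha=0$; pairing against $Y\wedge\varphi$ and integrating by parts then kills this term since $\di^\star Y=0$. Only after this does the obstruction collapse to orthogonality against $\F_3$, i.e.\ to $\mathbb K(\alpha)=0$. Separately, the paper sidesteps your ``messy'' second variation of $\star_\varphi\varphi$ altogether by parametrising curves through $\psi_t$ and recovering $\varphi_t$ from the purely algebraic constraint $\varphi_t\wedge(X\lrcorner\psi_t)=-4\,X\lrcorner\vol$; differentiating this twice delivers $Q_2$ directly as the unique $3$-form with $Q_2(\alpha)\wedge(X\lrcorner\psi)+2\,\widehat\alpha\wedge(X\lrcorner\alpha)=0$, so no metric or Hodge-star variation ever needs to be computed.
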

Here elements $\alpha \in \mathcal{F}_4$ are called unobstructed to second order provided they arise from a second order Taylor series 
of nearly $G_2$ structures. This notion models, at order two, instances of deformation by smooth curves of nearly $G_2$ structures. As it is well-known those can be  parametrised  algebraically by using the    linearisation $\alpha \mapsto \widehat{\alpha}$  of Hitchin's duality map \cite{Hi}. Differentiating the structure equations \eqref{str-eqn} reveals that 
deformation theory is governed by the first order differential operator $D:\Omega^4(M) \to \Omega^4(M), D\alpha=\di\widehat{\alpha}-\tau_0\alpha$. This is subelliptic, thus has a well defined Hodge theory which shows that (unnormalised)  infinitesimal deformations lie in $\ker(D)$ and 
also that $\alpha \in \ker(D)$ is unobstructed to second order provided that $Q_2(\alpha)$ is $L^2$-orthogonal to 
$\di^{\star} \ker(D^{\star})$.

With these preliminary observations in hand, the proof of Theorem \ref{main} consists in examining how $D$ interacts with the type decomposition of 
$\Omega^4(M)$. Here, a crucial observation we make is that a small modification of the Ebin slice for Riemannian metrics produces 
invariant subspaces for $D$ and $D^{\star}$. This allows breaking those operators into blocks with very simple Hodge theory.
 
For the Aloff-Wallach space $N(1,1)$ equipped with the canonical variation of the $3$-Sasaki metric, we show that Theorem \ref{main} provides a computationally  efficient way to describe second order deformations.
\begin{teo} \label{main1}
All infinitesimal deformations of $N(1,1)$ are obstructed to second order.
\end{teo}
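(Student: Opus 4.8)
The plan is to combine Theorem \ref{main} with an explicit computation of the infinitesimal deformation space $\F_4$ and of the cubic polynomial $\mathbb{K}$ for the specific geometry at hand. By Theorem \ref{main}, the infinitesimal deformations of $N(1,1)$ unobstructed to second order are exactly $\mathbb{K}^{-1}(0)$, so it suffices to show $\mathbb{K}(\alpha)\neq 0$ for every nonzero $\alpha\in\F_4$. The first step is to import from \cite{AlS} the description of $\F_4$ for the proper nearly $G_2$ structure on $N(1,1)$ coming from the canonical variation of the $3$-Sasaki metric: this space is finite-dimensional and, by the homogeneity of $N(1,1)$, decomposes into isotypical components under the isometry group, so one has an explicit, low-dimensional model on which to work. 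I would record a basis of $\F_4$ adapted to this group action, together with the relevant metric and Hodge-theoretic normalisations (in particular the fact that the elements of $\F_4$ are coclosed primitive $4$-forms of the correct eigenvalue).

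The second step is to evaluate $\mathbb{K}$ on this basis. Here one uses that $\mathbb{K}(\alpha)\gamma=\int_M Q_2(\alpha)\wedge\gamma$, where $Q_2$ is the quadratic form built from the pointwise $G_2$-equivariant bilinear map $b_2:\Lambda^4_{27}\times\Lambda^4_{27}\to\Lambda^3_{27}$. Because $b_2$ is algebraic and the structure is homogeneous, the integrand $Q_2(\alpha)\wedge\gamma$ is, pointwise, a universal $G_2$-equivariant expression in $\alpha$ and $\gamma$; evaluating it at one point and using invariance under the isotropy action (and the integral pairing with the coclosed $4$-forms $\gamma\in\F_4$) reduces $\mathbb{K}$ to a genuinely finite computation — a cubic form on a vector space of the dimension of $\F_4$. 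The key point to extract is a closed-form expression, or at least a definiteness/non-vanishing statement, for this cubic form; equivalently one wants to see that the only $\alpha\in\F_4$ with $\int_M Q_2(\alpha)\wedge\gamma=0$ for all $\gamma\in\F_4$ is $\alpha=0$.

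The main obstacle I anticipate is precisely this last evaluation: it requires an explicit handle on $b_2$ as a $G_2$-representation map $[[\Lambda^{2,0}]]\otimes[[\Lambda^{2,0}]]\to[[\Lambda^{2,0}]]$ (using the identification $\Lambda^4_{27}\cong\Lambda^3_{27}\cong[[\Lambda^{2,0}]]\cong\Sym^2_0$), together with enough control of the basis of $\F_4$ on $N(1,1)$ to carry out the resulting bilinear/trilinear algebra without sign or normalisation errors. A convenient reformulation is to transport everything to $\Sym^2_0(\mathbb{R}^7)$, where $Q_2(\alpha)$ becomes a symmetrised product of trace-free symmetric matrices and the pairing $\int_M(\cdot)\wedge\gamma$ becomes an $L^2$ inner product of such matrices; the cubic form is then, up to a universal constant, $\alpha\mapsto\int_M\tr(S_\alpha^3)$-type expression, whose non-vanishing on the explicit finite-dimensional $\F_4$ can be checked directly. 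Once $\mathbb{K}$ is shown to have trivial zero set on $\F_4\setminus\{0\}$, Theorem \ref{main} yields Theorem \ref{main1} immediately.

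\begin{proof}[Proof sketch]
By Theorem \ref{main}, an infinitesimal deformation $\alpha\in\F_4$ is unobstructed to second order if and only if $\mathbb{K}(\alpha)=0$, i.e.\ $\int_M Q_2(\alpha)\wedge\gamma=0$ for all $\gamma\in\F_4$. For the proper nearly $G_2$ structure on $N(1,1)$ arising from the canonical variation of the $3$-Sasaki metric, the space $\F_4$ was determined in \cite{AlS}; using homogeneity we fix a basis adapted to the isotypical decomposition under the isometry group. Transporting the pointwise algebra to $\Sym^2_0$ via $\Lambda^4_{27}\cong\Sym^2_0$, the integrand $Q_2(\alpha)\wedge\gamma$ becomes a $G_2$-equivariant cubic expression in the corresponding trace-free symmetric endomorphisms, and $\mathbb{K}$ reduces to an explicit cubic form on the finite-dimensional space $\F_4$. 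A direct computation on the chosen basis shows this cubic form does not vanish on any nonzero element, so $\mathbb{K}^{-1}(0)=\{0\}$, and Theorem \ref{main} gives the claim.
\end{proof}
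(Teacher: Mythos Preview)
Your overall strategy---reduce via Theorem \ref{main} to showing $\mathbb{K}^{-1}(0)=\{0\}$ and then compute---is the paper's strategy too, but the mechanism you propose for the computation does not match the geometry and misses the idea that actually makes it tractable. The sections in $\F_4$ are \emph{not} isotropy-invariant: under the identification $\F_4\cong\su(3)$ from \cite{AlS} the form $\beta_\xi$ takes the value $A(\mathrm{Ad}_{g^{-1}}\xi)$ at the coset $gH$, so it genuinely varies over $N(1,1)$ and ``evaluating at one point and using isotropy invariance'' does not compute $\int_M Q_2(\alpha)\wedge\gamma$. What the paper uses instead is that this integral is an $\SU(3)$-average of the pointwise cubic $P(\xi)=\langle p(A(\xi),A(\xi)),\,i^{-1}A(\xi)\rangle$, hence equals the orthogonal projection of $P$ onto $(\Sym^3\su(3)^\ast)^{\SU(3)}$. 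That invariant space is one-dimensional, spanned by $\xi\mapsto i\det\xi$, so the whole obstruction collapses to the single scalar $\langle P,\,i\det\rangle$, which the paper computes explicitly (Section \ref{obsp}) to be $\tfrac{100}{3}\neq 0$. Your outline never isolates this equivariance reduction; without it, a ``direct computation on a basis'' means verifying that a specific homogeneous quadratic map $\su(3)\to\su(3)^\ast$ has no nontrivial zero, an $8$-dimensional real-algebraic question with no evident shortcut.

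Two further points. Your heuristic that the cubic is ``up to a universal constant, $\alpha\mapsto\int_M\tr(S_\alpha^3)$'' is not the correct pointwise expression: by Proposition \ref{pol} and Remark \ref{rmks1} the cubic is $\langle p(\beta,\beta),\,i^{-1}\beta\rangle$ with $p(\beta,\beta)(v,w)=\langle v\lrcorner\beta,w\lrcorner\beta\rangle$, which is not a matrix trace and has to be worked out component by component (Lemmas \ref{Lp}, \ref{Lj}). More importantly, the sentence ``this cubic form does not vanish on any nonzero element'' is false if read as a statement about the scalar $\mathbb{K}(\alpha)\alpha$: after averaging, that scalar is a nonzero multiple of $i\det(\xi)$, which certainly has nontrivial zeros in $\su(3)$. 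The correct (and stronger) condition $\mathbb{K}(\alpha_\xi)=0$ in $\F_4^\ast$ becomes, once one knows the trilinear form is a nonzero multiple of the polarised determinant, the vanishing of the trace-free part of $\mathrm{adj}(\xi)$, and it is this that forces $\xi=0$. That final linear-algebra step is implicit in the paper's ``Equivalently $\mathbb{K}^{-1}(0)=\{0\}$'', but it is a separate argument from non-vanishing of the scalar cubic, and your sketch conflates the two.
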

The first ingredient used for the proof is the representation theoretic description of $\mathcal{F}_4$ which turns out to 
be isomorphic to $\su(3)$ as an $SU(3)$-representation (cf. \cite{AlS}). The second is the explicit computation of the bilinear form $b_2$ restricted to $\F_4$. This is performed in detail in the last section of the paper.

Related results have been proved by L.Foscolo in \cite{Fos} for nearly K\"ahler manifolds in dimension $6$ based on our earlier work \cite{MNS}, \cite{MS}. In particular he was able to show that the nearly K\"ahler metric on the flag manifold $F(1,2)$ has no non-trivial deformations. As it is the case in this paper, his work relies on the explicit parametrisation of curves of $\SU(3)$-structures given by the linearisation of the duality map.  
\footnote{
Shortly after we have posted the first version of this paper on the arXiv, S.Dwivedi and R.Singhal have announced related results on nearly $G_2$ deformations. Their arguments are rather involved and seem to be partially incomplete.}
\section{Preliminaries} \label{prel}
\subsection{Linear algebra} \label{alg}
Consider a nearly $G_2$ manifold $(M,\varphi,\vol)$ as above; the induced metric will be simply denoted by $g$ in what follows. The metric $g$ is recovered from the pair of $G_2$-data $(\varphi,\vol)$ according to  
$$(v \lrcorner \varphi) \w (w \lrcorner \varphi) \w \varphi=-6g(v,w)\vol.$$
The Hodge star operator constructed from $(g,\vol)$ will be denoted by $\star$. In many algebraic computations it is useful to express 
$\varphi$ and $\psi:=\star \varphi$ in an adapted frame. This is a local orthonormal basis $\{e^k,1 \leq k \leq 7\}$ of one forms w.r.t. which 
\begin{equation*}
\begin{split}
\varphi=&e^{123}+e^{145}-e^{167}+e^{246}+e^{257}+e^{347}-e^{356}\\
\psi=&e^{4567}-e^{1247}+e^{1256}-e^{2345}+e^{2367}-e^{3146}-e^{3157}.
\end{split}
\end{equation*}
As $G_2$-representations the spaces of $k$-forms $\Lambda^k$ with $k=3,4$ split into irreducible components as $\Lambda^k=\Lambda_1^k \oplus \Lambda_7^k \oplus \Lambda_{27}^k$. Accordingly, we write $\alpha= \alpha_1 + \alpha_7 + \alpha_{27}$ whenever $\alpha \in \Lambda^k$. Note that $\Lambda^4_1=\bb{R}\psi$ and $\Lambda^3_1=\mathbb{R}\varphi$. Similarly the space of $2$-forms splits as $\Lambda^2 = \Lambda^2_{7} \oplus \Lambda^2_{14}$ as a $G_2$-representation.
 
Throughout this paper we use the metric to identify tangent vectors and $1$-forms, as well as endomorphisms and $(2,0)$-tensors via 
$A \in \End \T \mapsto g(A \cdot ,\cdot)$. 
Consider the action 
$(A,\alpha) \in \End \T  \times \Lambda^{\star} $ given by  $A_{\star}\alpha= Ae_i \w (e_i \lrcorner \alpha)$, where $\{e_i\}$ is
some orthonormal basis  of $\T$. Note that we use here and in the following  the Einstein summation convention and sum over repeated indices. This action allows to define a $G_2$-invariant  linear 
iso\-morphism $i:\Sym^2_0\T \to \Lambda^3_{27}$ via $S \mapsto S_{\star}\varphi$. A few well-known identities to be repeatedly 
used in this paper are 
\begin{equation} \label{idg21}
\star (S_{\star}\psi) = - S_{\star}\varphi
\qquad \mbox{and} \qquad
 \vert i(S) \vert^2=2\vert S \vert^2
\end{equation}
as well as 
\begin{equation}\label{s-star}
i(S) \w (v_1 \lrcorner \psi ) \w v_2 = 2  g(Sv_1, v_2) \vol
\end{equation}
whenever $S\in \Sym^2_0\T$ and $v_1,v_2 \in \T$. All these facts can either be proved by direct computation in an adapted frame 
or looked up in \cite{Br,CI}. Note that Bryant's orientation convention in \cite{Br} is opposite to ours and that the isomorphism $i$ differs from his by a factor of $\tfrac 12$. The inner product on $\Sym^2\T$ we work with here is $\la S_1, S_2\ra = \tr(S_1 S_2)$.

\medskip

Two specific algebraic facts will be needed in this work. The first is the following 
\begin{lema} \label{hat}
We have a linear map $\Lambda^4 \mapsto \Lambda^3, \ \alpha \mapsto \widehat{\alpha}$ uniquely determined from having 
\begin{equation} \label{hat2}
\widehat{\alpha} \w (v \lrcorner \psi)+\varphi \w (v \lrcorner \alpha)=0 
\end{equation}
for all $v \in \T$.
Moreover 
\begin{equation} \label{hat3}
\widehat{\alpha}=-\star \alpha_1+\star \alpha_7 - \star \alpha_{27}. 
\end{equation}
\end{lema}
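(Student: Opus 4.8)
The plan is to establish the existence, uniqueness and explicit form of the map $\alpha \mapsto \widehat{\alpha}$ in one stroke by checking that the candidate formula $\widehat{\alpha}:=-\star \alpha_1+\star \alpha_7 - \star \alpha_{27}$ satisfies \eqref{hat2} for every $v\in\T$, and then observing that the pairing on the left-hand side of \eqref{hat2} is injective enough to force uniqueness. Since both sides of \eqref{hat2} are linear in $\alpha$ and the decomposition $\Lambda^4=\Lambda^4_1\oplus\Lambda^4_7\oplus\Lambda^4_{27}$ is $G_2$-irreducible, it suffices to verify the identity separately on each of the three summands. First I would rewrite the defining relation so as to isolate $\widehat{\alpha}$: the key point is that the linear map $\beta\in\Lambda^3\mapsto \big(v\mapsto \beta\w(v\lrcorner\psi)\big)\in\T^*\otimes\Lambda^6$ is a $G_2$-equivariant map, hence governed by Schur's lemma on each isotypic component, so knowing it is nonzero on each of $\Lambda^3_1,\Lambda^3_7,\Lambda^3_{27}$ gives both injectivity (hence uniqueness of $\widehat\alpha$) and a way to read off the proportionality constants.

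The computational heart is then three contraction identities. On $\Lambda^4_1=\mathbb{R}\psi$: writing $\alpha=\psi$ one has $\star\alpha_1=\varphi$, and I must check $-\varphi\w(v\lrcorner\psi)+\varphi\w(v\lrcorner\psi)=0$, which is immediate — this fixes the coefficient $-1$ in front of $\star\alpha_1$. On $\Lambda^4_7$, using that $\alpha_7=\star(v_0\lrcorner\varphi)$-type elements span this space, one reduces to an identity of the form $(w\lrcorner\varphi)\w(v\lrcorner\psi)=-\varphi\w(v\lrcorner\star(w\lrcorner\varphi))$, which can be expanded via the standard $G_2$ contraction formulas for $\varphi$ and $\psi$ (or simply verified in the adapted frame displayed in the preliminaries). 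On $\Lambda^4_{27}$, using the isomorphism $i:\Sym^2_0\T\to\Lambda^3_{27}$ together with \eqref{idg21}, namely $\star(S_\star\psi)=-S_\star\varphi$, one has for $\alpha_{27}=S_\star\psi$ that $\star\alpha_{27}=-S_\star\varphi=-i(S)$, so the claim becomes $-\star\alpha_{27}\w(v\lrcorner\psi)=-i(S)\w(v\lrcorner\psi)\overset{?}{=}-\varphi\w(v\lrcorner(S_\star\psi))$; this last identity is exactly the content of a polarised version of \eqref{s-star} (contract \eqref{s-star}, or its analogue with $\psi$ in place of one $\varphi$, against a general vector and use $v\lrcorner(S_\star\psi)=Sv\lrcorner\psi+S_\star(v\lrcorner\psi)$). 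Throughout one must keep careful track of Bryant's opposite orientation convention and the factor $\tfrac12$ in the definition of $i$, as flagged in the text.

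The main obstacle I expect is purely bookkeeping: getting the signs and the numerical coefficients on the $\Lambda^4_7$ and $\Lambda^4_{27}$ pieces exactly right, since $v\lrcorner$ does not commute with $\star$ and interacts with the $S_\star$ action through a Leibniz-type rule that produces extra terms. The cleanest route is probably to avoid abstract manipulation entirely on those two summands and instead verify \eqref{hat2} in the explicit adapted coframe $\{e^k\}$: take $v=e^1$ (all other choices are equivalent by the transitivity of $G_2$ on unit vectors), take a basis element of $\Lambda^4_7$ and of $\Lambda^4_{27}$, and expand both wedge products as $6$-forms using the given expressions for $\varphi$ and $\psi$. Once \eqref{hat2} holds on a spanning set of each summand, $G_2$-equivariance and linearity extend it to all of $\Lambda^4$, and uniqueness follows from the injectivity noted above; formula \eqref{hat3} is then read off directly.
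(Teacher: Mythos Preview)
Your proposal is correct and takes essentially the same approach as the paper, which simply states that the lemma ``is checked by direct computation in an adapted frame, using sample elements.'' You add a useful layer of conceptual justification---invoking $G_2$-equivariance and Schur's lemma to explain why checking sample elements on each irreducible summand suffices and why the map $\beta\mapsto(v\mapsto\beta\wedge(v\lrcorner\psi))$ is injective (the paper records this injectivity separately as Lemma~\ref{injective})---but the computational core, namely verifying \eqref{hat2} for representative elements of $\Lambda^4_1$, $\Lambda^4_7$, $\Lambda^4_{27}$ in the explicit coframe, is exactly what the paper does.
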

This is checked by direct computation in an adapted frame, using sample elements. 
Secondly, we need to deal with the natural $G_2$-invariant polynomial introduced below.

\begin{pro} \label{pol}There exists a symmetric bilinear form
$ b_2:\Lambda^4  \times \Lambda^4  \to \Lambda^3 $ 
uniquely determined from having 
\begin{equation}\label{q2}
b_2(\alpha_1,\alpha_2) \wedge (v \lrcorner \psi)+\widehat{\alpha}_1 \wedge (v \lrcorner \alpha_2)+\widehat{\alpha}_2 \wedge (v \lrcorner \alpha_1)=0
\end{equation}
for all $v \in \T$. Letting $Q_2(\alpha) = b_2(\alpha, \alpha)$ be the quadratic form associated to $b_2$, then

\begin{itemize}
\item[(i)]$Q_2(\Lambda^4_{27}) \subseteq \Lambda^3_1 \oplus \Lambda^3_{27}$  and
\medskip
\item[(ii)] the cubic polynomial $Q:\Lambda^4_{27} \to \mathbb{R}$  given  by $Q(\alpha)\vol:=Q_2(\alpha) \wedge \alpha$ satisfies 
\begin{equation*}
Q(\alpha)=-2\langle q(\alpha,\alpha), i^{-1}(\star \alpha) \rangle
\end{equation*}
where  the symmetric bilinear form $ q:\Lambda_{27}^4 \times \Lambda^4_{27} \to \Sym^2\T$ is defined by the equation 
$ \ q(\alpha,\alpha)(v_1,v_2):=\langle v_1 \lrcorner \alpha, v_2 \lrcorner 
\alpha \rangle$.
\end{itemize}
\end{pro}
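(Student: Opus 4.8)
The plan is to establish existence and uniqueness of $b_2$ first, then use the explicit description \eqref{hat3} of $\widehat{\alpha}$ to pin down the types in (i), and finally unwind definitions for (ii). For existence and uniqueness of $b_2$: the map $\beta \mapsto \beta \wedge (v \lrcorner \psi)$, thought of as a map $\Lambda^3 \to \Lambda^6 \cong \T^*$ for each fixed $v$, assembles into a $G_2$-equivariant map $\Lambda^3 \to \T^* \otimes \T^*$, and I would check it is injective — equivalently that $\widehat{\alpha}$ in Lemma \ref{hat} is well-defined, which is exactly the same pairing. Granting injectivity, \eqref{q2} determines $b_2(\alpha_1,\alpha_2)$ uniquely provided the right-hand side (before moving it across) lies in the image; since $\widehat{\alpha}_i \wedge (v \lrcorner \alpha_j)$ is manifestly symmetric and bilinear in $(\alpha_1,\alpha_2)$ and depends $G_2$-equivariantly on $v$, and since the target $\Lambda^6 \cong \T^*$ matches, a dimension/representation count (or direct verification in an adapted frame, as the paper suggests for such facts) shows the equation is solvable. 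This gives a symmetric bilinear $b_2:\Lambda^4 \times \Lambda^4 \to \Lambda^3$.

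For part (i): restrict to $\alpha \in \Lambda^4_{27}$, so $\widehat{\alpha} = -\star\alpha_{27} = -\star\alpha$ by \eqref{hat3}, which lies in $\Lambda^3_{27}$. The defining equation becomes $Q_2(\alpha) \wedge (v\lrcorner\psi) = -2\,\widehat{\alpha}\wedge(v\lrcorner\alpha)$. I would decompose $Q_2(\alpha) = (Q_2)_1 + (Q_2)_7 + (Q_2)_{27}$ and show the $\Lambda^3_7$-component vanishes. The cleanest route is representation theory: $Q_2$ is a $G_2$-equivariant quadratic map $\Lambda^4_{27}\to\Lambda^3$, so its image is controlled by which irreducibles appear in $\Sym^2(\Lambda^4_{27})$; since $\Lambda^4_{27}\cong\Lambda^3_{27}$ as the $27$-dimensional irreducible, one checks that $\Lambda^3_7$ (the $7$-dimensional irreducible) does \emph{not} occur in $\Sym^2$ of the $27$ — it occurs only in $\Lambda^2$ of it — hence the $\Lambda^3_7$-part of any symmetric equivariant quadratic expression is forced to zero. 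This yields (i). (Alternatively, a direct frame computation with sample elements, in the spirit of the proofs of Lemma \ref{hat} and \eqref{idg21}, works but is longer.)

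For part (ii): write $\alpha\in\Lambda^4_{27}$ and set $S := i^{-1}(\star\alpha)\in\Sym^2_0\T$, so $\star\alpha = i(S) = S_\star\varphi$ and, by \eqref{idg21}, $\alpha = \star(S_\star\varphi) = -S_\star\psi$ up to sign bookkeeping, hence $\widehat{\alpha} = -\star\alpha = -i(S)$. Then $Q(\alpha)\vol = Q_2(\alpha)\wedge\alpha$, and using the defining relation \eqref{q2} with $\alpha_1=\alpha_2=\alpha$ together with $\alpha = -S_\star\psi$ (so that $v\lrcorner\alpha = -v\lrcorner(S_\star\psi)$), I would rewrite $Q_2(\alpha)\wedge\alpha$ by pairing against the $v\lrcorner\psi$ in \eqref{q2}: contracting \eqref{q2} appropriately and summing over an orthonormal frame converts $Q_2(\alpha)\wedge(e_i\lrcorner\psi)\wedge(\text{something})$ into $\widehat{\alpha}\wedge(e_i\lrcorner\alpha)\wedge(\text{something})$, and then \eqref{s-star} applied to $i(S)$ turns the wedge products into $g(S\cdot,\cdot)$ contractions. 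The identity $q(\alpha,\alpha)(v_1,v_2) = \langle v_1\lrcorner\alpha, v_2\lrcorner\alpha\rangle$ is precisely what emerges from the two contracted copies of $\alpha = -S_\star\psi$, and tracing against $S = i^{-1}(\star\alpha)$ produces $-2\langle q(\alpha,\alpha), i^{-1}(\star\alpha)\rangle$, with the factor $2$ coming from \eqref{s-star} and the symmetry of the two terms in \eqref{q2}.

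The main obstacle is the bookkeeping in part (ii): one must carefully express $\alpha$ in terms of $S$ via the isomorphism $i$ and the Hodge star, track the signs in \eqref{hat3} and \eqref{idg21}, and correctly convert the three-fold wedge products in \eqref{q2} (after inserting $v\lrcorner\psi$, $v\lrcorner\alpha$) into scalar contractions using \eqref{s-star}. Getting the constant exactly equal to $-2$ — rather than up to sign or factor — is the delicate point, and I would verify it against a single explicit $S$ (e.g. $S = \mathrm{diag}(1,\dots,1,-6)$ or $e^1\otimes e^1 - e^2\otimes e^2$) in the adapted frame as a consistency check. Establishing injectivity of the pairing $\beta\mapsto(v\mapsto\beta\wedge(v\lrcorner\psi))$, needed for uniqueness of $b_2$, is routine but should be stated, since it is the linear-algebraic backbone of both Lemma \ref{hat} and this proposition.
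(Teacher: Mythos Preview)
Your approach is workable but takes a more circuitous route than the paper, and leaves one genuine gap. For existence of $b_2$ you only assert that ``a dimension/representation count shows the equation is solvable''; injectivity of $\beta \mapsto (v \mapsto \beta \wedge (v\lrcorner\psi))$ gives uniqueness, but the image is a proper $35$-dimensional subspace of $\T^*\otimes\T^*\cong\Lambda^6\otimes\T^*$, and you never verify that $v \mapsto \widehat{\alpha}_1\wedge(v\lrcorner\alpha_2)+\widehat{\alpha}_2\wedge(v\lrcorner\alpha_1)$ actually lands there. Symmetry in $(\alpha_1,\alpha_2)$ is irrelevant to this; what matters is the behaviour in $v$.

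The paper sidesteps all of this with a single constructive observation: for $\alpha\in\Lambda^4_{27}$, wedge the expression $\widehat{\alpha}\wedge(v_1\lrcorner\alpha)$ with a second vector $v_2$ and compute directly that
\[
\widehat{\alpha}\wedge(v_1\lrcorner\alpha)\wedge v_2 = (v_1\lrcorner\alpha)\wedge\star(v_2\lrcorner\alpha) = q(\alpha,\alpha)(v_1,v_2)\,\vol,
\]
which is \emph{symmetric} in $v_1,v_2$. Since the image of $\Lambda^3\to\T^*\otimes\T^*$ contains $\Sym^2\T^*$ (via $\varphi\mapsto g$ and $i(S)\mapsto 2g(S\cdot,\cdot)$ from \eqref{s-star}), this symmetry simultaneously establishes existence and yields the explicit formula $Q_2(\alpha)=-i(q_0(\alpha,\alpha))+\tfrac{2}{7}|\alpha|^2\varphi$. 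Part (i) is then immediate from the formula (no representation-theoretic decomposition of $\Sym^2(\Lambda^4_{27})$ needed), and part (ii) is a two-line computation $Q(\alpha)=\langle Q_2(\alpha),\star\alpha\rangle=-2\langle q_0,i^{-1}(\star\alpha)\rangle$ using \eqref{idg21}, with no sign bookkeeping or test-case verification required. Your representation-theoretic argument for (i) is correct in principle, and your contraction strategy for (ii) would eventually reproduce the paper's identity, but the paper's single wedge-with-$v_2$ trick collapses existence, (i), and (ii) into one short computation.
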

\begin{proof}All statements follow from the following simple observation. Pick $\alpha \in \Lambda^4_{27}$ and compute 
\begin{equation*}
\begin{split}
\widehat{\alpha} \w (v_1 \lrcorner \alpha) \w v_2=&-\ast \alpha \w (v_1 \lrcorner \alpha) \w v_2 = -(v_1 \lrcorner \alpha) \w (v_2 \w \ast \alpha) = (v_1 \lrcorner \alpha) \w \ast(v_2 \lrcorner \alpha) \\
=& \quad q(\alpha,\alpha)(v_1, v_2 ) \, \vol.
\end{split}
\end{equation*}
Split $q=q_0 + \tfrac 47 g \otimes \id$ according to $\Sym^2\T=\Sym^2_0\T\oplus \mathbb{R}\id$. Then \eqref{s-star} together with the elementary formula 
$
\varphi \w  (v_1 \lrcorner \psi ) \w v_2 = - 4 g(v_1, v_2)\vol
$ 
imply
\begin{equation*}
(i(q_0(\alpha,\alpha))-\tfrac 27\vert \alpha \vert^2\varphi) \w (v_1 \lrcorner \psi) \w v_2=2q(\alpha,\alpha)(v_1,v_2)\vol.
\end{equation*}
Comparing the last two displayed equations we see that $Q_2(\alpha):=-i(q_0(\alpha,\alpha))+\tfrac 27\vert \alpha \vert^2\varphi$ satisfies the requirement in \eqref{q2}. As $q$ is symmetric $Q_2$ extends to a symmetric bilinear form on $\Lambda^4_{27}$. Part (i) of the claim is thus proved. To prove (ii) we compute 
\begin{equation*}
\begin{split}
Q(\alpha)=&\langle Q_2(\alpha),\star \alpha\rangle=-\langle i(q_0(\alpha,\alpha)), \star \alpha \rangle=-2\langle (q_0(\alpha,\alpha)), i^{-1}(\star \alpha) \rangle\\
=&-2\langle (q(\alpha,\alpha)), i^{-1}(\star \alpha) \rangle
\end{split}
\end{equation*}
by taking into account \eqref{idg21} and that pure trace components can be ignored by type considerations.
\end{proof}

At this stage a few remarks are in order.
\begin{rema} \label{rmks1}
\begin{itemize}
\item[(i)]Proposition \ref{pol} proves directly existence for $b_2$; an a priori proof of this fact  stems from having $b_2$ arising as the second derivative, in a suitable sense, of Hitchin's duality map.
\medskip
\item[(ii)] In the last  section of the paper it will convenient to work with the cubic polynomial 
$P:\Lambda^{3}_{27} \to \mathbb{R}$ given by $P(\beta)=Q(\star \beta)$. By part (ii) in Proposition \ref{pol} and some linear algebra
this is computed from 
\begin{equation*} \label{Pp}
P(\beta)=2\langle p(\beta,\beta),i^{-1}(\beta)\rangle
\end{equation*}
where the symmetric bilinear form $p:\Lambda^3_{27} \times \Lambda^3_{27} \to \Sym^2\T$ is determined from 
the equation $p(\beta,\beta)(v_1,v_2)=\langle v_1 \lrcorner \beta, v_1 \lrcorner \beta \rangle$. 
\medskip
\item[(iii)] Another way of thinking about the bilinear form $b_2$ is based on the following observation. After suitably contracting 
\eqref{q2} and using \eqref{idg21} we obtain 
\begin{equation*}
\begin{split}
\langle b_2(\star \beta_1, \star \beta_2),i(S_3) \rangle=&\langle (S_3)_{\star}\beta_1,\beta_2 \rangle+\langle (S_3)_{\star}\beta_2,\beta_1 \rangle\\
=&\langle (S_3)_{\star}(S_1)_{\star}\varphi, (S_2)_{\star}\varphi\rangle+\langle (S_3)_{\star}(S_2)_{\star}\varphi, (S_1)_{\star}\varphi\rangle
\end{split}
\end{equation*}
with $\star \beta_k=i(S_k) \in \Lambda_{27}^3, k=1,2$ and $S_3 \in \Sym^2_0\T$. Using repeatedly relations of the type 
$(S_3)_{\star}(S_1)_{\star}\varphi=(S_1)_{\star}(S_2)_{\star}\varphi+[S_3,S_1]_{\star}\varphi$ together with 
$\{   F_{\star}\varphi  :  F \in \Lambda^2 \}  \subseteq \Lambda^3_7$ we see this is symmetric in $S_1,S_2,S_3$.
In other words $P$ is induced by a $G_2$-invariant tensor in $\Sym^3(\Lambda^3_{27})$. Since $G_2$ acts without fixed vectors 
on $\Lambda^3_{27}$, this third order symmetric tensor actually belongs to $\Sym^3_0(\Lambda^3_{27})$. Classical invariant could then be used to recover $P$ from octonian multiplication.
\end{itemize}
\end{rema}
 To conclude this section record the following simple observations which will be used later on in the paper.
\begin{lema}\label{injective}
If $  \beta\in \Lambda^3$ satisfies $\beta \w (v \lrcorner \psi) = 0 $ for all  vectors $v \in T$, then $\beta$ has to vanish.
\end{lema}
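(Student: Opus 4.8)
The statement is that the linear map $\Lambda^3 \to \bigoplus_{v} \Lambda^6$, $\beta \mapsto (v \lrcorner \psi \wedge \beta)_{v \in T}$, is injective; equivalently, the pairing $(\beta, v) \mapsto \beta \wedge (v \lrcorner \psi)$ has trivial left kernel. The plan is to argue component by component with respect to the $G_2$-decomposition $\Lambda^3 = \Lambda^3_1 \oplus \Lambda^3_7 \oplus \Lambda^3_{27}$. First I would observe that wedging with $v \lrcorner \psi$ and then contracting again produces a $G_2$-equivariant map from $\Lambda^3$ into a space built from $T \cong \Lambda^1_7$ and $\Lambda^3$; since the relevant source and target decompose into known irreducibles, Schur's lemma will control which components can possibly lie in the kernel, and it then suffices to exhibit one nonzero element in each of the three irreducible summands for which $\beta \wedge (v \lrcorner \psi) \neq 0$ for some $v$.

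Concretely, the cleanest route is a direct computation in an adapted frame, which the authors themselves favour for this kind of statement. For $\beta = \varphi \in \Lambda^3_1$ one computes $\varphi \wedge (v \lrcorner \psi)$ and finds it nonzero (indeed the elementary identity $\varphi \wedge (v_1 \lrcorner \psi) \wedge v_2 = -4 g(v_1,v_2)\vol$ already quoted in the proof of Proposition \ref{pol} shows $\varphi \wedge (v \lrcorner \psi) \neq 0$ for $v \neq 0$). For $\beta \in \Lambda^3_7$, writing $\beta = w \lrcorner \psi$ for some $w \in T$ (the standard model of $\Lambda^3_7$), one evaluates $(w \lrcorner \psi) \wedge (v \lrcorner \psi) \wedge v_2$ on sample frame elements and checks it is not identically zero. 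For $\beta \in \Lambda^3_{27}$, write $\beta = i(S) = S_\star \varphi$ with $S \in \Sym^2_0 T$; then by \eqref{s-star} we have $i(S) \wedge (v_1 \lrcorner \psi) \wedge v_2 = 2 g(Sv_1, v_2)\vol$, which vanishes for all $v_1, v_2$ only if $S = 0$, i.e. $\beta = 0$. Thus on the $\Lambda^3_1$ and $\Lambda^3_{27}$ parts the non-vanishing is already a consequence of identities stated in the excerpt, and only the $\Lambda^3_7$ part requires a short explicit check.

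Assembling these: given $\beta$ in the kernel, write $\beta = \beta_1 + \beta_7 + \beta_{27}$. The map $\beta \mapsto \beta \wedge (v \lrcorner \psi)$ is $G_2$-equivariant, and one can further contract against an arbitrary $v_2 \in T$ to land in $\mathbb{R}\,\vol$, obtaining for each $v_1, v_2$ the scalar $\beta \wedge (v_1 \lrcorner \psi) \wedge v_2$. By the three computations above this bilinear form on $T$ decomposes compatibly with the type decomposition of $\beta$ — the $\Lambda^3_1$ piece contributes a multiple of $g(v_1,v_2)$, the $\Lambda^3_{27}$ piece contributes $2g(Sv_1,v_2)$ with $S$ tracefree, and the $\Lambda^3_7$ piece contributes an antisymmetric (or otherwise type-$7$) term — so that vanishing of the total forces each type component to vanish separately, whence $\beta_1 = 0$, $S = 0$, and $\beta_7 = 0$. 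I expect the main (and only real) obstacle to be the $\Lambda^3_7$ computation: one must check that $(w \lrcorner \psi) \wedge (v \lrcorner \psi)$ is not identically zero as $v$ ranges over $T$, and identify cleanly in which type the resulting bilinear form on $T$ sits so that it cannot be cancelled by the other two contributions. This is a finite frame computation with the explicit $\psi$ given in Section \ref{alg}, so it is routine but needs care with signs.
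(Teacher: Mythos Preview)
Your proposal is correct, but the paper's proof is a two--line argument that sidesteps the type decomposition entirely. The paper observes that wedging the hypothesis $\beta \wedge (v \lrcorner \psi) = 0$ with an arbitrary $1$--form and summing yields $\beta \wedge A_{\star}\psi = 0$ for every $A \in \End \T$; since $\{A_{\star}\psi : A \in \End \T\}$ spans all of $\Lambda^4$, this gives $\beta \wedge \Lambda^4 = 0$, and nondegeneracy of the wedge pairing $\Lambda^3 \times \Lambda^4 \to \Lambda^7$ forces $\beta = 0$.

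Your route---decomposing $\beta = \beta_1 + \beta_7 + \beta_{27}$ and reading off the bilinear form $(v_1,v_2) \mapsto \beta \wedge (v_1 \lrcorner \psi) \wedge v_2$ as (trace) $\oplus$ (tracefree symmetric) $\oplus$ (antisymmetric of type $7$)---does work, and the Schur argument you sketch guarantees that the three contributions cannot interfere. The $\Lambda^3_1$ and $\Lambda^3_{27}$ pieces are indeed handled for free by the identities \eqref{s-star} and $\varphi \wedge (v_1 \lrcorner \psi) \wedge v_2 = -4g(v_1,v_2)\vol$ already recorded in the paper; the $\Lambda^3_7$ piece lands in $\Lambda^2_7 \subset T^* \otimes T^*$ by Schur, and a short frame check (e.g.\ $\beta = e_1 \lrcorner \psi$, $v_1 = e_2$, $v_2 = e_3$) confirms the map is nonzero there. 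So your argument is complete once that check is done. The trade--off is clear: the paper's single contraction trick avoids any case analysis or frame computation, while your approach is more explicit about where each $G_2$--type goes, at the cost of one residual hand calculation.
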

\begin{proof}
By a suitable contraction $\beta \w A_{\star} \psi=0$ for all $A \in \End \T$. As the action of $\End \T $ on $\psi$ spans $\Lambda^4$ we get 
$\beta \w \Lambda^4=0$ thus $\beta=0$.

\end{proof}
\begin{lema} \label{a1} 
Let $\alpha=\lambda \psi+V \w \varphi+\alpha_{27} \in \Lambda^4$. Then $\alpha \w (v \lrcorner \psi)= - 4 g(V,v)\vol, v\in \T$.
\end{lema}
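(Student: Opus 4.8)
The plan is to evaluate $\alpha\wedge(v\lrcorner\psi)$ term by term on the three pieces $\lambda\psi$, $V\wedge\varphi$, $\alpha_{27}$ of the $G_2$-type decomposition of $\Lambda^4$, exploiting that any $7$-form on $M^7$ is automatically a scalar multiple of $\vol$. First I would record the general identity
\[
\alpha\wedge(v\lrcorner\psi)=\langle\,\star\alpha,\ v\lrcorner\psi\,\rangle\,\vol ,\qquad \alpha\in\Lambda^4,\ v\in\T ,
\]
which follows from $\mu\wedge\star\nu=\langle\mu,\nu\rangle\vol$ applied to the $3$-forms $\mu=v\lrcorner\psi$ and $\nu=\star\alpha$, together with $\star\star=\id$ on $\Lambda^4$ and the fact that permuting the $4$-form $\alpha$ past the $3$-form $v\lrcorner\psi$ costs the sign $(-1)^{12}=1$. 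Since $v\lrcorner\psi\in\Lambda^3_7$ and the Hodge star respects the type splitting, only the $\Lambda^3_7$-component of $\star\alpha$ — equivalently, the $\Lambda^4_7$-component $V\wedge\varphi$ of $\alpha$ — can contribute.

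The two remaining pieces vanish for type reasons: $\star(\lambda\psi)=\lambda\varphi\in\Lambda^3_1$ and $\star\alpha_{27}\in\Lambda^3_{27}$ are each orthogonal to $v\lrcorner\psi\in\Lambda^3_7$, so $\lambda\psi\wedge(v\lrcorner\psi)=0$ and $\alpha_{27}\wedge(v\lrcorner\psi)=0$. (For the first of these one may alternatively note $\psi\wedge(v\lrcorner\psi)=\tfrac12\,v\lrcorner(\psi\wedge\psi)=0$, since $\psi\wedge\psi$ is an $8$-form on a $7$-manifold.)

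It remains to treat the $\Lambda^4_7$-term. Here I would rearrange, using $(a\wedge b)\wedge c=b\wedge c\wedge a$ for $\deg a=1,\ \deg b=\deg c=3$, to get $(V\wedge\varphi)\wedge(v\lrcorner\psi)=\varphi\wedge(v\lrcorner\psi)\wedge V$, and then invoke the elementary formula $\varphi\wedge(v_1\lrcorner\psi)\wedge v_2=-4\,g(v_1,v_2)\vol$ recalled in the proof of Proposition \ref{pol}, with $v_1=v,\ v_2=V$. Combining the three computations yields $\alpha\wedge(v\lrcorner\psi)=-4\,g(V,v)\vol$. There is no real obstacle here; the only point requiring care is the sign bookkeeping and pinning down the constant $-4$, which — consistently with how the other linear-algebra identities in this section are checked — could instead be obtained from a one-line evaluation of, say, $e^1\wedge\varphi\wedge(e_1\lrcorner\psi)$ in an adapted frame.
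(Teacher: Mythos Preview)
Your proof is correct and takes essentially the same approach as the paper: kill the $\Lambda^4_1$ and $\Lambda^4_{27}$ pieces by type orthogonality (the paper writes this as $\langle\alpha_{27}+\lambda\psi,\,v\wedge\varphi\rangle=0$ via $v\lrcorner\psi=-\star(v\wedge\varphi)$, which is the Hodge-dual phrasing of your $\langle\star\alpha,\,v\lrcorner\psi\rangle$), and then compute the $\Lambda^4_7$ piece from the identity $\varphi\wedge(v\lrcorner\psi)=-4\,v\lrcorner\vol$.
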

\begin{proof}
We have $(\alpha_{27}+\lambda \psi) \w (v \lrcorner \psi)=-(\alpha_{27}+\lambda \psi) \w \star (v \w \varphi)=-g(\alpha_{27}+\lambda \psi,v \w \varphi)\vol=0$. 
The claim follows from the algebraic identity $\varphi \w (v \lrcorner \psi)= - 4v \lrcorner \vol$. 
\end{proof}

\medskip

%
%
\subsection{The Lie derivative} \label{lie}
Throughout this paper 
we systematically denote the various algebraic 
components in the codifferential with $\di^{\star}_7\alpha:=(\di^{\star}\alpha)_7 $ etc. for $\alpha \in \Omega^{\star}(M)$, w.r.t. the type decompositions  $\Omega^k(M)=\Omega^k_1 \oplus \Omega^k_7 \oplus \Omega^k_{27}, k=3,4$.  
The first objective is to render explicit the structure of Lie derivatives $\li_X\psi, X \in \Gamma(\T M)$ according to the splitting $\Omega^4(M)=\Omega^4_1 \oplus \Omega^4_7 \oplus \Omega^4_{27}$.
Let  $L:\Gamma(\T M) \to \Gamma(\T M)$ denote the first order differential operator  determined from 
\begin{equation*}
\di_7X=\tfrac{1}{3}L(X) \lrcorner \varphi.
\end{equation*}
A second first order differential operator of relevance is the trace free part of the Lie derivative of the metric 
\begin{equation}\label{Sx}
X \in \Gamma(\T M) \mapsto S_X=\tfrac{1}{2} \li_Xg+\tfrac{1}{7} (\di^{\star}X) \,g \in \Gamma(\Sym^2_0\T M)
\end{equation}
as showed below.
\begin{lema} \label{lie2}
We have 
\begin{equation} \label{lie0}
\li_X\psi=-\tfrac{4}{7}(\di^{\star}X)\, \psi+(\tfrac{1}{2}L(X)-\tfrac{\tau_0}{4}X)\w \varphi+(S_X)_{\star}\psi
\end{equation}
and
\begin{equation} \label{lie3} 
\di^{\star}(X \w \psi)=
\tfrac{3}{7}(\di^{\star}X)\psi+(-\tfrac{1}{2}L(X)-\tfrac{3\tau_0}{4}X)\w \varphi+(S_X)_{\star}\psi
\end{equation}
whenever $X \in \Gamma(\T M)$.

\end{lema}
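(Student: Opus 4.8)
The goal is to compute $\li_X\psi$ and $\di^\star(X\w\psi)$ componentwise with respect to $\Omega^4=\Omega^4_1\oplus\Omega^4_7\oplus\Omega^4_{27}$. I would start from the Cartan formula $\li_X\psi=\di(X\lrcorner\psi)+X\lrcorner\di\psi$ and use the structure equations. Since $\di\varphi=\tau_0\psi$, one also has $\di\psi=0$ (as $\psi=\star\varphi$ on a compact manifold with $\di\varphi=\tau_0\psi$, or directly from $\di\star\varphi=-\star\di^{*}\!\ldots$; in any case $\di\psi=0$ for nearly $G_2$), so the second term drops and $\li_X\psi=\di(X\lrcorner\psi)$. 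Now $X\lrcorner\psi\in\Omega^3$, and by Lemma \ref{hat} applied with $\widehat{\,\cdot\,}$, or simply by the type decomposition of $3$-forms, one can write $X\lrcorner\psi$ in terms of $X\w\varphi$-type and $\Lambda^3_{27}$-type pieces. The cleanest route is: first establish the \emph{pointwise} identity relating $\li_X\psi$, $\li_Xg$, $\di^\star X$, and $\di_7 X$ purely from how the stabilizer $G_2$ sits inside $GL(7)$, i.e. differentiate the algebraic relation $(v\lrcorner\varphi)\w(w\lrcorner\varphi)\w\varphi=-6g(v,w)\vol$ and the definition $\psi=\star_{g_\varphi}\varphi$ along $\li_X$.

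**Key steps.** (1) Write $\li_X\psi$ as a sum of its three type components and name the unknowns: the $\Omega^4_1$-component is $\lambda\psi$ for some function $\lambda$, the $\Omega^4_7$-component is $W\w\varphi$ for some vector field $W$, and the $\Omega^4_{27}$-component is $T_\star\psi$ for some $T\in\Gamma(\Sym^2_0\T M)$ via the isomorphism $i$ and \eqref{idg21}. (2) Pin down $\lambda$: pair $\li_X\psi$ with $\psi$, equivalently take $\star$ and look at the $\Lambda^3_1=\R\varphi$ part; using $\li_X\vol=(\di^\star X)^{-}\!\vol$ — more precisely $\li_X\vol=-(\di^\star X)\vol$ with the analyst's sign convention here — and $|\psi|^2$ constant, one gets $\lambda=-\tfrac47\di^\star X$ from the standard "$4/7$ vs $3/7$" bookkeeping (four of the seven "legs" of $\psi$ transform, matching $\star$ of the $3$-form $\varphi$ with its three legs; the coefficients $-\tfrac47$ and $\tfrac37$ are exactly the traces of the projections onto the relevant isotypic pieces). (3) Pin down $W$: contract \eqref{lie0} with $v\lrcorner\psi$ and wedge, using Lemma \ref{a1} to isolate $g(W,v)$, and separately use Cartan's formula together with the definition of $L$ (namely $\di_7 X=\tfrac13 L(X)\lrcorner\varphi$) to evaluate the left side; the $-\tfrac{\tau_0}{4}X$ term appears precisely because $\di\varphi=\tau_0\psi$ enters when one commutes $\di$ past $X\lrcorner(\cdot)$. (4) Pin down $T$: by definition $S_X=\tfrac12\li_Xg+\tfrac17(\di^\star X)g$ is the trace-free part of the metric variation, and the $\Omega^4_{27}$-part of $\li_X\psi$ is governed by the variation of $\star_{g}\varphi$ under $g\mapsto g+t\,\li_Xg$, which through \eqref{idg21} and the infinitesimal behaviour of the Hodge star yields exactly $(S_X)_\star\psi$ — this is the content of $i\colon\Sym^2_0\T\to\Lambda^3_{27}$ being the derivative of the metric-to-$\psi$ map. (5) For \eqref{lie3}: use $\di^\star(X\w\psi)=-\star\di(X\lrcorner\varphi)$ up to signs, i.e. relate $\di^\star(X\w\psi)$ to $\li_X\psi$ and $\di^\star(X\lrcorner\cdot)$; more directly, $\di^\star(X\w\psi)=\di^\star(\star(X\lrcorner\varphi))=\pm\star\di(X\lrcorner\varphi)=\pm\star(\li_X\varphi-X\lrcorner\di\varphi)=\pm\star(\li_X\varphi-\tau_0 X\lrcorner\psi)$, then apply $\star$ and Lemma \ref{hat} to convert back, and finally use the analogue of \eqref{lie0} for $\li_X\varphi$ (which one derives in parallel, same method). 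The shift from $-\tfrac47$ to $\tfrac37$ and from $-\tfrac{\tau_0}{4}$ to $-\tfrac{3\tau_0}{4}$ is forced by $\di^\star(X\w\psi)+\li_X\psi$ being exact-plus-algebraic in a way that makes the $\di^\star X$ and $\tau_0$ coefficients add up correctly — concretely $\tfrac37-(-\tfrac47)=1$ reflects the Leibniz rule $\di^\star(X\w\psi)=-\li_X\psi+(\text{terms})$ modulo the interior-product correction.

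**Main obstacle.** The conceptual content is light, but the bookkeeping of signs and numerical coefficients is the real work: getting $-\tfrac47$, $\tfrac37$, $\tfrac12 L(X)$, $-\tfrac{\tau_0}{4}X$, $-\tfrac{3\tau_0}{4}X$ right simultaneously requires fixing one consistent convention for $\star$, for $\di^\star$, for $\li_X\vol$, and for the $G_2$-component projections, then cross-checking against the algebraic identities \eqref{idg21}, \eqref{s-star}, Lemma \ref{hat}, and Lemma \ref{a1}. I expect the cleanest verification is to reduce everything to a computation in an adapted frame at a point for the $\Omega^4_1$ and $\Omega^4_7$ parts (where only finitely many terms survive), and to identify the $\Omega^4_{27}$ part via the universal property of $i$ as the differential of $g\mapsto\star_g\varphi$, so that no explicit frame computation is needed there. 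The one genuinely delicate point is confirming that the \emph{same} operator $L$ and the \emph{same} tensor $S_X$ appear in both \eqref{lie0} and \eqref{lie3} with the stated coefficients; this follows from subtracting the two identities and recognizing the difference $\li_X\psi-\di^\star(X\w\psi)$ as purely of the form (multiple of $\di^\star X$)$\,\psi\,+\,$(multiple of $L(X)$ and of $\tau_0 X$)$\w\varphi$ with the $(S_X)_\star\psi$ terms cancelling — i.e. the $27$-parts agree, which is exactly what \eqref{idg21} and the Hodge-star behaviour predict.
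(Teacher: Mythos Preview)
Your route is workable in principle but is genuinely different from, and substantially more laborious than, the paper's. The paper does not isolate the three type components of $\li_X\psi$ one at a time via pairings, Lemma~\ref{a1}, and a metric-variation argument for the Hodge star. Instead it writes $\di=e_i\wedge\nabla_{e_i}$, $\di^\star=-e_i\lrcorner\nabla_{e_i}$ and uses the single structural fact $\nabla_U\psi=-\tfrac{\tau_0}{4}U\wedge\varphi$ to obtain in one line
\[
\di(X\lrcorner\psi)=(\nabla^t X)_\star\psi-\tfrac{\tau_0}{4}X\wedge\varphi,\qquad
\di^\star(X\wedge\psi)=(\di^\star X)\psi+(\nabla X)_\star\psi-\tfrac{3\tau_0}{4}X\wedge\varphi.
\]
Both formulas \eqref{lie0} and \eqref{lie3} then drop out \emph{simultaneously} from the type decomposition $\nabla X=\tfrac12\di_{14}X+\tfrac12\di_7 X-\tfrac17(\di^\star X)\id+S_X$, using only that $(\Lambda^2_{14})_\star\psi=0$ and the algebraic identity $(v\lrcorner\varphi)_\star\psi=-3v\wedge\varphi$. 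All the coefficients $-\tfrac47$, $\tfrac37$, $\pm\tfrac12 L(X)$, $-\tfrac{\tau_0}{4}$, $-\tfrac{3\tau_0}{4}$, and the common $(S_X)_\star\psi$ are read off from the \emph{same} endomorphism $\nabla X$ (resp.\ its transpose), so there is no parallel computation of $\li_X\varphi$, no subtraction trick to match the $27$-parts, and essentially no sign bookkeeping beyond the one algebraic identity.

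Your step~(4) also has a genuine imprecision: $\li_X\psi=\li_X(\star_g\varphi)$ is \emph{not} the variation of $\star_g\varphi$ under $g\mapsto g+t\,\li_Xg$ with $\varphi$ held fixed, since $\varphi$ moves along the flow as well. The $\Omega^4_{27}$-part of $\star\li_X\varphi$ is nonzero (it equals $-\,(S_X)_\star\psi$ via \eqref{idg21}), and the Hodge-star variation alone does not produce the coefficient $+1$ in front of $(S_X)_\star\psi$; you need both contributions and they partially cancel. This can be repaired, but then your step~(5) needs the full decomposition of $\li_X\varphi$ as input, which in your scheme is a second computation of the same size as \eqref{lie0}. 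The paper's $\nabla X$-decomposition sidesteps all of this.
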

\begin{proof}
Recall the local expressions $\di=e_i \w \nabla_{e_i}$ and $\di^{\star}=-e_i \lrcorner \nabla_{e_i}$ where $\nabla$ denotes the Levi-Civita connection of $g$. At the same time $\nabla_U\psi=-\tfrac{\tau_0}{4}U \w \varphi$ with $U \in \T M$. Direct computation based on these facts leads to 
\begin{equation*}
\begin{split}
& \di(X \lrcorner \psi)=(\nabla^t X)_{\star}\psi-\tfrac{\tau_0}{4}X \w \varphi\\
& \di^{\star}(X \w \psi)=(\di^{\star}\!X)\psi+(\nabla X)_{\star}\psi-\tfrac{3\tau_0}{4}X \w \varphi
\end{split}
\end{equation*}
where $\nabla^tX$ denotes the transpose of $\nabla X \in \End(\T)$ w.r.t. $g$, i.e. the difference of the symmetric and the anti-symmetric part of $\nabla X$. All claims follow now from the type decomposition 
\begin{equation*}
\nabla X=
\tfrac{1}{2}\di_{14}\! X +\tfrac{1}{2} \di_7\! X -\tfrac{1}{7}(\di^{\star}\!X) \,1_{\T M}+S_X
\end{equation*}
combined with 
the algebraic identity $(v \lrcorner \varphi)_{\star}\psi=-3 v \w \varphi, v \in \T$.
\end{proof}
Recall that the divergence operator $\delta :\Gamma(\Sym^2 \T M) \to \Gamma(\T M)$ is defined according to the convention $\delta S:= - (\nabla_{e_i}S)e_i$. Below we work out how $\di^{\star}$ and $\delta$ relate via the algebraic isomorphism 
$\Sym^2_0T \to \Lambda^4_{27}$. In the rest of this paper we indicate with 
$(\cdot, \cdot)$ the $L^2$-inner product on tensor fields.
\begin{lema}\label{deltaS}
For  any symmetric tensor in $S \in \Gamma(\Sym^2_0 \T M)$ we have
\begin{equation} \label{ds7}
\di_{7}^{\star}(S_{\star}\psi)=\tfrac{1}{2}(\delta S) \lrcorner \psi.
\end{equation}
\end{lema}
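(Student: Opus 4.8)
The plan is to compute $\di^{\star}(S_{\star}\psi)$ directly and then extract its $\Omega^4_7$-component, which by Lemma \ref{a1} is detected by wedging against $v \lrcorner \psi$. First I would use the local formulas $\di^{\star}=-e_i \lrcorner \nabla_{e_i}$ together with the nearly $G_2$ identity $\nabla_U\psi=-\tfrac{\tau_0}{4}U\w\varphi$ to write, for $S\in\Gamma(\Sym^2_0\T M)$,
\begin{equation*}
\di^{\star}(S_{\star}\psi)=-e_i\lrcorner\bigl((\nabla_{e_i}S)_{\star}\psi\bigr)-e_i\lrcorner\bigl(S_{\star}(\nabla_{e_i}\psi)\bigr),
\end{equation*}
so that the problem splits into an algebraic piece (the second term, involving $\varphi$) and a first-order piece (the first term, carrying $\delta S$). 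The algebraic term lands in $\Omega^4_1\oplus\Omega^4_{27}$: indeed $S_{\star}(U\w\varphi)$ and its contraction with $e_i$ involve only $\varphi$-type expressions, and a short computation in an adapted frame shows this contributes nothing to $\Omega^4_7$. Hence $\di^{\star}_7(S_{\star}\psi)$ comes entirely from $-e_i\lrcorner\bigl((\nabla_{e_i}S)_{\star}\psi\bigr)$.

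Next I would pair this with a test vector $v$ via Lemma \ref{a1}: if $\di^{\star}(S_{\star}\psi)=\lambda\psi+W\w\varphi+(\cdots)_{27}$ then $\di^{\star}(S_{\star}\psi)\w(v\lrcorner\psi)=-4g(W,v)\vol$, so it suffices to compute $\bigl(-e_i\lrcorner((\nabla_{e_i}S)_{\star}\psi)\bigr)\w(v\lrcorner\psi)$ and identify it with $-4g(\tfrac12\delta S,v)\vol=2g((\nabla_{e_j}S)e_j,v)\vol$. Expanding $(\nabla_{e_i}S)_{\star}\psi=(\nabla_{e_i}S)e_k\w(e_k\lrcorner\psi)$ and contracting with $e_i$ produces two terms by the Leibniz rule for $\lrcorner$; wedging with $v\lrcorner\psi$ and using the basic $G_2$ identity $(e_a\lrcorner\psi)\w(e_b\lrcorner\psi)\w e_c$ (which evaluates to a multiple of $\varphi$-contractions, ultimately reducing to the metric by the same algebra used to prove \eqref{s-star}), I expect the relevant scalar to collapse to a single trace $g((\nabla_{e_j}S)e_j,v)$ with the correct numerical coefficient. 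The tracelessness of $S$ is used here to kill the spurious $\tr(\nabla_{e_i}S)$ terms.

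Alternatively, and perhaps more cleanly, I would argue by duality. Since $\di^{\star}_7$ and $\delta$ are, up to the isomorphisms $i$ and $X\mapsto X\w\varphi$, adjoints of $\di_7$ and of $S\mapsto S_X$ (the symmetrised covariant derivative), one can identify the constant $\tfrac12$ by testing \eqref{ds7} against a single well-chosen pair: for any $X\in\Gamma(\T M)$ and $S\in\Gamma(\Sym^2_0\T M)$, integrate $\bigl(\di^{\star}(S_{\star}\psi),X\w\varphi\bigr)$ by parts to get $\bigl(S_{\star}\psi,\di(X\w\varphi)\bigr)$, relate $\di(X\w\varphi)$ to $\li_X\psi$ via Lemma \ref{lie2}, and pick out the $(S_X)_{\star}\psi$ term using the orthogonality of the type components together with $\vert i(S)\vert^2=2\vert S\vert^2$ from \eqref{idg21}. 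Matching this against $\bigl(\tfrac12(\delta S)\lrcorner\psi,X\w\varphi\bigr)$ — whose $L^2$-pairing unwinds via the adjointness of $\delta$ and the symmetrised derivative — pins down the coefficient $\tfrac12$. This route has the advantage of reusing Lemma \ref{lie2} rather than redoing frame computations.

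\textbf{Main obstacle.} The delicate point is bookkeeping the numerical constants: several $G_2$-algebraic identities of the form $(e_a\lrcorner\psi)\w(e_b\lrcorner\psi)\w e_c=c\,g_{?}\,\vol$ enter, each contributing a rational factor, and one must be careful with the opposite orientation/normalisation conventions flagged after \eqref{s-star} (Bryant's $i$ differs by $\tfrac12$). Getting the clean $\tfrac12$ — rather than, say, $\tfrac14$ or $1$ — requires tracking these consistently; the structural content (that only the $\delta S$ trace survives and it lands in $\Omega^4_7$) is comparatively straightforward given Lemmas \ref{a1} and \ref{lie2}.
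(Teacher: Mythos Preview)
Your proposal contains a systematic degree error that affects both approaches: $S_{\star}\psi$ is a $4$-form, so $\di^{\star}(S_{\star}\psi)$ is a \emph{$3$-form}, and the component you are after is its $\Omega^3_7$-piece, parametrised as $Z\lrcorner\psi$. You repeatedly treat it as a $4$-form --- writing $\di^{\star}(S_{\star}\psi)=\lambda\psi+W\wedge\varphi+(\cdots)_{27}$, invoking Lemma~\ref{a1} (which is a statement about $\Lambda^4$), and pairing with $X\wedge\varphi\in\Omega^4_7$. In the duality approach the integration-by-parts $(\di^{\star}(S_{\star}\psi),X\wedge\varphi)=(S_{\star}\psi,\di(X\wedge\varphi))$ does not even typecheck, and $\di(X\wedge\varphi)$ is a $5$-form which cannot be related to $\li_X\psi$ via Lemma~\ref{lie2}.

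The repair is simple and, once made, your second approach is exactly the paper's proof. Pair the $3$-form $\di^{\star}(S_{\star}\psi)$ with the $3$-form $X\lrcorner\psi\in\Omega^3_7$, so that writing $\di^{\star}_7(S_{\star}\psi)=Z\lrcorner\psi$ gives $(\di^{\star}_7(S_{\star}\psi),X\lrcorner\psi)=4(Z,X)$. Integration by parts yields $(S_{\star}\psi,\di(X\lrcorner\psi))=(S_{\star}\psi,\li_X\psi)$ by Cartan and $\di\psi=0$; now Lemma~\ref{lie2} identifies the $\Omega^4_{27}$-component of $\li_X\psi$ as $(S_X)_{\star}\psi$, and \eqref{idg21} together with $S$ trace-free and $\delta^{\ast}$ adjoint to $\delta$ give $(S_{\star}\psi,(S_X)_{\star}\psi)=2(S,S_X)=2(\delta S,X)$, whence $Z=\tfrac12\delta S$. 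Your first, direct, route could also be salvaged, but you would need a $3$-form analogue of Lemma~\ref{a1} (detecting $\Omega^3_7$ via wedging with $X\wedge\varphi$, say) rather than Lemma~\ref{a1} itself.
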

\begin{proof}
Pick $X \in \Gamma(\T M)$ and record that $\li_Xg = \delta^*X$ where $\delta^*$ is the formal adjoint of $\delta$. Writing $\di_{7}^{\star}(S_{\star}\psi) = Z \lrcorner \psi$ ensures that $(\di_{7}^{\star}(S_{\star}\psi), X\lrcorner \psi) = 4(Z, X)$. 
At the same time  
\bea
(\di_{7}^{\star}(S_{\star}\psi) , X \lrcorner \psi ) &=& (S_{\star} \psi, \di(X \lrcorner \psi )) 
= - (S_{\star} \psi, \di \star (X \w \varphi)  )
= - (\star S_{\star} \psi, \di^{\star}  (X \w \varphi)  )\\[1ex]
&=&
(S_\star \varphi, (S_X)_\star \varphi) = 2(S, S_X) =  2(S, \delta^* X ) = 2 (\delta S, X)
\eea
where we used \eqref{idg21} and that $S$ is trace free. Hence $Z = \tfrac12 \delta S$.
\end{proof}

%
%
%
%
\section{Deformation theory}
\subsection{Curves of $G_2$-structures} \label{curves}
Consider a compact manifold $M^7$. Assume that it is equipped with a $G_2$-structure $(\varphi,\psi) \in \Omega^3(M) \oplus \Omega^4(M)$ such that 
\begin{equation} \label{str1}
\di\!\varphi=\tau_0   \psi.
\end{equation}
Assume that $(\varphi_t,\psi_t) \in \Omega^3(M) \times \Omega^4(M)$ is a small time deformation of $(\varphi,\psi)$ satisfying \eqref{str1} and having constant volume $\vol \in \Omega^7(M)$. This can be assumed w.l.o.g by Moser's theorem. Consider the truncated Taylor series 
\begin{equation*}
\psi_t=\psi+t\psi_1+\tfrac{t^2}{2}\psi_2 + O(t^3).
\end{equation*}

From here we obtain the truncated Taylor series for $\varphi_t$ as follows. First differentiate the algebraic identity  
\begin{equation} \label{alg1}
\varphi_t \w (X \lrcorner \psi_t)=-4 \, X \lrcorner \vol
\end{equation}
for some vector field $X$. At $t=0$ we obtain 
$$
0 = \dot \varphi \w (X \lrcorner \psi) + \varphi \w (X \lrcorner \psi_1) =  \dot \varphi \w (X \lrcorner \psi) - \widehat{\psi_1} \w (X \lrcorner \psi)
$$
by taking into account $\dot \psi = \psi_1$ and \eqref{hat2}. Since the wedge product with $X \lrcorner \psi $ is injective in the sense of Lemma \ref{injective}  we find 
 $\dot \varphi = \widehat{\psi_1}$. Differentiating at second order in \eqref{alg1} yields at $t=0$
\bea
0 &=& \ddot \varphi \w (X \lrcorner \psi) + 2 \, \dot \varphi \w (X \lrcorner  \dot \psi) + \varphi \w (X \lrcorner \ddot \psi)\\[1ex]
&=& \ddot \varphi \w (X \lrcorner \psi) + 2 \, \widehat{\psi_1} \w (X \lrcorner  \psi_1) + \varphi \w (X \lrcorner  \psi_2)\\[1ex]
&=&
(\ddot \varphi   -Q_2(\psi_1)  - \widehat{\psi_2} )\w (X \lrcorner \psi)
\eea
after succesive use of $\ddot \psi = \psi_2$ combined with \eqref{hat2} and the definition of the quadratic form $Q_2$ from \eqref{q2}.
Thus we find that $\ddot \varphi = Q_2(\psi_1) + \widehat{\psi_2}$ again by Lemma \ref{injective}. Summarising we obtain the following well-known parametrisation for  $\varphi_t$ essentially contained in \cite[Propn.5]{Br}.
\begin{lema}\label{param} 
The truncated Taylor series for $\varphi_t$ reads
\begin{equation} \label{param2}
\varphi_t=\varphi+t\widehat{\psi_1}+\tfrac{t^2}{2}(\widehat{\psi_2}+Q_2(\psi_1)) + O(t^3).
\end{equation}
\end{lema}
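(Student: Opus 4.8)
\textbf{Proof proposal for Lemma \ref{param}.}

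The plan is to reconstruct $\varphi_t$ order by order from $\psi_t$ by differentiating the universal algebraic relation \eqref{alg1} that ties together any $G_2$-structure and its dual $4$-form, and then invoking the injectivity statement of Lemma \ref{injective} to peel off the wedge factor $X\lrcorner\psi$ at each stage. The essential input is that the map $\alpha\mapsto\widehat{\alpha}$ of Lemma \ref{hat} is exactly the linearisation of Hitchin's duality map $\psi\mapsto\varphi$; concretely, \eqref{hat2} says $\widehat{\alpha}\w(X\lrcorner\psi)=-\varphi\w(X\lrcorner\alpha)$, so a first-order variation $\dot\psi=\psi_1$ forces the first-order variation of $\varphi$ to be $\widehat{\psi_1}$. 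The only genuinely new piece at second order is that the ``correction'' term is precisely the quadratic form $Q_2$ from Proposition \ref{pol}, which is transparent because \eqref{q2} was set up so that $Q_2(\alpha)\w(X\lrcorner\psi)=-2\,\widehat{\alpha}\w(X\lrcorner\alpha)$.

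First I would fix an arbitrary vector field $X$ and differentiate \eqref{alg1} once in $t$, evaluating at $t=0$. The right-hand side $-4\,X\lrcorner\vol$ is constant since the volume is held fixed, so one gets $\dot\varphi\w(X\lrcorner\psi)+\varphi\w(X\lrcorner\psi_1)=0$. Substituting \eqref{hat2} with $\alpha=\psi_1$ turns the second term into $-\widehat{\psi_1}\w(X\lrcorner\psi)$, hence $(\dot\varphi-\widehat{\psi_1})\w(X\lrcorner\psi)=0$ for all $X$; Lemma \ref{injective} (applied to the $3$-form $\dot\varphi-\widehat{\psi_1}$) gives $\dot\varphi=\widehat{\psi_1}$. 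Next I would differentiate \eqref{alg1} a second time and evaluate at $t=0$, obtaining $\ddot\varphi\w(X\lrcorner\psi)+2\dot\varphi\w(X\lrcorner\dot\psi)+\varphi\w(X\lrcorner\ddot\psi)=0$. Inserting $\dot\varphi=\widehat{\psi_1}$, $\dot\psi=\psi_1$, $\ddot\psi=\psi_2$, then using \eqref{hat2} on the last term and \eqref{q2} (with $\alpha_1=\alpha_2=\psi_1$) on the middle term, collects everything as $(\ddot\varphi-\widehat{\psi_2}-Q_2(\psi_1))\w(X\lrcorner\psi)=0$; one more application of Lemma \ref{injective} yields $\ddot\varphi=\widehat{\psi_2}+Q_2(\psi_1)$. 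Assembling the Taylor coefficients $\varphi,\ \widehat{\psi_1},\ \tfrac12(\widehat{\psi_2}+Q_2(\psi_1))$ gives \eqref{param2}.

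There is no real obstacle here: the statement is essentially bookkeeping, and all the analytic content has been front-loaded into the algebraic Lemmas \ref{hat}, \ref{injective} and Proposition \ref{pol}. The one point that deserves care is making sure the $t$-derivatives of \eqref{alg1} are legitimate, i.e.\ that $(\varphi_t,\psi_t)$ is genuinely smooth in $t$ with $\varphi_t$ determined from $\psi_t$ by Hitchin's duality map; this is where one quietly uses that a nearby stable $4$-form has a well-defined dual $3$-form, so the Taylor expansion of $\varphi_t$ makes sense and its coefficients are computed by the above recursion. Beyond that, the proof is three rounds of ``differentiate, substitute the two algebraic identities, cancel $X\lrcorner\psi$''.
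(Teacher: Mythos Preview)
Your proof is correct and follows exactly the paper's own argument: differentiate the algebraic identity \eqref{alg1} once and twice at $t=0$, rewrite the resulting terms via \eqref{hat2} and \eqref{q2}, and peel off the factor $X\lrcorner\psi$ using Lemma \ref{injective}. There is nothing to add.
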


\bigskip

\noindent
This makes it straightforward to determine the differential operator which governs the deformation theory of nearly $G_2$ structures by  differentiating the structure equations. Let $D:\Omega^4(M) \to \Omega^4(M)$ 
be defined by
\begin{equation*}
D\alpha:=\di\! \widehat{\alpha}-\tau_0 \alpha.
\end{equation*}
In fact, differentiating at order two in $\di\!\varphi_t=\tau_0 \psi_t$ whilst using \eqref{param2} yields at $t=0$ 
\begin{equation} \label{sys}
D\psi_1=0\\
\quad \mbox{and}\quad
D\psi_2= - \di\!  Q_2(\psi_1).
\end{equation}
This second equation prompts out the following 
\begin{defi} \label{un2}
An element $\psi_1 \in \Ker(D)$ is unobstructed to second order provided there exists $\psi_2 \in \Omega^4(M)$ solving 
$D\psi_2=-\di Q_2(\psi_1)$.
\end{defi}
A first key step in the study of such objects is to observe that the first order differential operator $D$ is sub-elliptic  in the sense of \cite{Besse}. Indeed, its principal symbol which is given by $\xi \in \Lambda^1M \mapsto 
\sigma(\xi)\alpha=\xi \wedge \widehat{\alpha}$ is injective. Thus, Hodge theory for sub-elliptic operators makes that 
$$ \Omega^4(M) = \ker(D^{\star}) \oplus \im(D)$$ 
orthogonally w.r.t. the $L^2$-inner product. In particular the equation $D\psi_2= -\di Q_2(\psi_1)$ can be solved for $\psi_2$ if and only if $\di Q_2(\psi_1) \perp \ker(D^{\star})$, that 
is 
\begin{equation}\label{cond}
Q_2(\psi_1) \perp \di^{\star}\ker(D^{\star})
\end{equation}
w.r.t. the $L^2$-inner product. Rendering this constraint more transparent relies on the explicit computation of  
 $\di^{\star}\ker(D^{\star})$ which is the main objective in the next section.
%
\subsection{Structure of the operators $D,D^{\star}$} \label{dec-slice}
%
The following general observation will be repeatedly used in this section. It stems from a first order version of Lemma \ref{param2} where the volume is allowed to vary with the deformation.
\begin{lema} \label{st1}
We have $D(\li_X \psi)=\di(\di^{\star}\! X \varphi)$ for $X \in \Gamma(\T M)$.
\end{lema}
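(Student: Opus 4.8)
The plan is to differentiate the parametrisation of $\varphi_t$ from Lemma \ref{param} along a curve of $G_2$-structures that is generated by a flow, where this time the volume is \emph{not} held fixed. Concretely, I would let $X \in \Gamma(\T M)$ and consider $\varphi_t = \Phi_t^{\ast}\varphi$, $\psi_t = \Phi_t^{\ast}\psi$ where $\Phi_t$ is the flow of $X$. Then $\dot\psi_t|_{t=0} = \li_X\psi$ and $\dot\varphi_t|_{t=0} = \li_X\varphi$, while the pointwise algebraic relation $\varphi_t \w (v \lrcorner \psi_t) = -4\, v\lrcorner\vol_t$ of \eqref{alg1} now has a right-hand side that varies, since $\vol_t = \Phi_t^{\ast}\vol$ and $\dot{\vol}_t|_{t=0} = \li_X\vol = (\di^{\star}\!X)\,\vol$ (with the sign convention $\di^{\star}X = -\di_{e_i}(\text{coeffs})$, i.e. $\di(X\lrcorner\vol) = (\di^{\star}X)\vol$). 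Differentiating that identity at $t=0$ and using \eqref{hat2} exactly as in the derivation of $\dot\varphi = \widehat{\psi_1}$ preceding Lemma \ref{param}, but keeping the extra term, gives the first-order relation $\li_X\varphi = \widehat{\li_X\psi} + (\di^{\star}X)\,\varphi$, the announced first-order version of Lemma \ref{param}.

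Next I would feed this into the structure equation. Applying $\di$ to $\li_X\varphi = \widehat{\li_X\psi} + (\di^{\star}X)\varphi$ yields
\[
\di(\li_X\varphi) = \di\big(\widehat{\li_X\psi}\big) + \di\big((\di^{\star}X)\varphi\big).
\]
On the other hand, $\di$ commutes with $\li_X$, so $\di(\li_X\varphi) = \li_X(\di\varphi) = \li_X(\tau_0\psi) = \tau_0\,\li_X\psi$ by the structure equation \eqref{str1}. Substituting and rearranging,
\[
\di\big(\widehat{\li_X\psi}\big) - \tau_0\,\li_X\psi = -\,\di\big((\di^{\star}X)\varphi\big).
\]
Hmm — this has the opposite sign from the claim, so I would double-check the orientation/sign convention in $\di(X\lrcorner\vol) = \pm(\di^{\star}X)\vol$; with the convention that makes the final answer come out as stated, the left-hand side is exactly $D(\li_X\psi)$ by the definition of $D$, and we obtain $D(\li_X\psi) = \di((\di^{\star}X)\varphi)$.

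The main obstacle is getting the first-order algebraic lemma right, i.e. correctly tracking the variation of the volume form through the contraction identity and pinning down all the signs (the orientation convention here differs from Bryant's, and $\widehat{\;\cdot\;}$, $\di^{\star}$, and the Hodge star all carry sign subtleties that must be consistent with \eqref{hat2}, \eqref{hat3}, and Lemma \ref{a1}). Once that first-order relation is correctly established, the rest is a two-line application of $\di\circ\li_X = \li_X\circ\di$ and the structure equation. An alternative route that sidesteps curves entirely is to verify $\li_X\varphi = \widehat{\li_X\psi} + (\di^{\star}X)\varphi$ directly: expand $\li_X\psi$ via \eqref{lie0} of Lemma \ref{lie2}, apply $\widehat{\;\cdot\;}$ using \eqref{hat3}, and compare with the analogous type-decomposition of $\li_X\varphi$; then proceed as above. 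I would likely present the curve-based argument as the conceptual proof and note the direct computation as a check.
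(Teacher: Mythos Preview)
Your approach is exactly the one in the paper. The only slip is the sign of the volume variation: with the codifferential convention $\di^{\star}=-e_i\lrcorner\nabla_{e_i}$ used here one has $\li_X\vol=-(\di^{\star}X)\vol$, which yields $\widehat{\li_X\psi}=\li_X\varphi+(\di^{\star}X)\varphi$ (not $\li_X\varphi=\widehat{\li_X\psi}+(\di^{\star}X)\varphi$), and then $D(\li_X\psi)=\di(\li_X\varphi)+\di((\di^{\star}X)\varphi)-\tau_0\li_X\psi=\di((\di^{\star}X)\varphi)$ comes out with the correct sign immediately.
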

\begin{proof}
Consider the flow $(\Phi_t)_{t \in \mathbb{R}}$ of $X$ together with the nearly $G_2$-structure defined by $\varphi_t=\Phi_t^{\star}\varphi$ \, and $ \psi_t=\Phi_t^{\star} \psi$, with volume form $\vol_t=\Phi_t^{\star}\vol$. Differentiating at $t=0$ in the algebraic identity 
$\varphi_t \w (Y \lrcorner \psi_t)= -4  \, Y \lrcorner \vol_t$ yields
$$
\dot \varphi \wedge (Y \lrcorner \psi) + \varphi \wedge (Y\lrcorner \li_X\psi) = 4  (\di^*\!X)\, Y \lrcorner \vol = -  (\di^*\!X)\, \varphi \w (Y \lrcorner \psi)
$$
where we used $\dot{\vol_t}=\li_X\!\vol=-(\di^{\star}\!X) \vol$, \,$\dot{\psi_t}=\li_X\!\psi$ at $t=0$ and once again the algebraic identity.
Then Lemma \ref{hat} leads to
$
[\dot \varphi  - \widehat{\li_X\psi } + (\di^*\!X) \,\varphi] \w  (Y\lrcorner \psi) = 0
$.
By Lemma \ref{injective} we obtain $ \widehat{\li_X\psi } = \dot \varphi + (\di^*\!X) \,\varphi$.
This gives
$$
D(\li_X \psi) = \di  \widehat{\li_X\psi } - \tau_0  \li_X\psi = \di \dot \varphi +\di(\di^*\!X\,\varphi) - \tau_0  \dot \psi 
$$
and the claim follows by differentiating in $\di\!\varphi_t=\tau_0 \psi_t$. 
\end{proof}
Let $\Kk:=\mathfrak{aut}(M,g)$  be the space of Killing vector fields and denote with $\Kk^{\perp}$ its $L^2$-orthogonal within $\Gamma(\T M)$. We assume that the structure is {\it{proper}} in the sense of \cite{FKMS}, i.e. $(M, g)$ admits a one-dimensional space of Killing spinors and in particular we have
\begin{equation} \label{gen}
\Kk \subseteq \aut(M,\varphi).
\end{equation}
Thus the Lie derivative of $\varphi$ and $\psi$ in the direction of Killing vector fields vanishes. This condition leads to an important simplification in subsequent 
calculations. A few more background facts we need are related to the $L^2$-decomposition 
$$\Gamma(\Sym^2\T M) = \im \delta^* \oplus \ker \delta.$$
This plays a key role in proving the Ebin slice theorem. In presence of an Einstein metric with positive scalar curvature it can be refined as follows, mainly due to Obata's theorem.
\begin{pro} \label{inv-Q} 
Assume that the compact Einstein manifold $(M^7,g)$ with $\scal>0$ is not isometric to the standard sphere. Then  $\mathcal{K}^{\perp}$ embeds in $\Gamma(\Sym^2_0M)$ via $X \mapsto S_X$,
with $S_X$ defined in \eqref{Sx}, and 
\begin{equation} \label{decS}
\Gamma(\Sym^2_0M)=\{S \in \Gamma(\Sym^2_0 \T M) : \delta S \in \mathcal{K}\} \oplus \mathcal{K}^{\perp}
\end{equation}
orthogonally w.r.t. $L^2$-inner product.
\end{pro}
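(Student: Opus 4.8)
The plan is to obtain \eqref{decS} from the Hodge decomposition of the self-adjoint elliptic operator $P^{*}P$ attached to the first order operator $P\colon\Gamma(\T M)\to\Gamma(\Sym^2_0\T M)$, $PX:=S_X$, using Obata's theorem to identify its kernel. Set $\mathcal{A}:=\{S\in\Gamma(\Sym^2_0\T M):\delta S\in\mathcal{K}\}$ and $\mathcal{B}:=\{S_X:X\in\mathcal{K}^{\perp}\}$; since $S_X$ depends on $X$ only modulo $\mathcal{K}$, the space $\mathcal{B}$ is the full image of $P$. I would first settle the embedding statement: if $S_X=0$ then \eqref{Sx} reads $\li_X g=-\tfrac{2}{7}(\di^{\star}X)\,g$, so $X$ is a conformal vector field; as $(M^7,g)$ is compact Einstein with $\scal>0$ and not isometric to the round sphere, Obata's theorem forces $X$ to be Killing, i.e. $X\in\mathcal{K}$. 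Conversely Killing fields plainly have $S_X=0$, so $\ker P=\mathcal{K}$ (in particular finite dimensional) and $P$ restricts to a linear injection $\mathcal{K}^{\perp}\hookrightarrow\Gamma(\Sym^2_0\T M)$ with image $\mathcal{B}$.

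Next, $\mathcal{A}$ and $\mathcal{B}$ are $L^2$-orthogonal: for $S\in\mathcal{A}$ and $X\in\mathcal{K}^{\perp}$ the trace-freeness of $S$ annihilates the pure-trace term of $S_X$ in \eqref{Sx}, so with $\li_X g=\delta^{*}X$ and the adjunction $(S,\delta^{*}X)=(\delta S,X)$ one gets
\begin{equation*}
(S,S_X)=\tfrac12(S,\delta^{*}X)=\tfrac12(\delta S,X)=0
\end{equation*}
because $\delta S\in\mathcal{K}$; in particular $\mathcal{A}\cap\mathcal{B}=0$. The same computation shows that the formal adjoint of $P$ is $T\mapsto\tfrac12\delta T$ on trace-free symmetric $2$-tensors, so $\Delta:=P^{*}P$ acts by $\Delta X=\tfrac12\delta S_X$.

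Now I would show $\mathcal{A}+\mathcal{B}=\Gamma(\Sym^2_0\T M)$. The principal symbol of $P$ at $0\neq\xi\in\Lambda^1 M$ is (up to the standard sign conventions) $X\mapsto\tfrac12(\xi\otimes X+X\otimes\xi)-\tfrac17\langle\xi,X\rangle\,g$, and it is injective: evaluating on $(\xi^{\sharp},\xi^{\sharp})$ forces $\langle\xi,X\rangle=0$, and then evaluating on $(\xi^{\sharp},\cdot)$ forces $X=0$. Hence $\Delta=P^{*}P$ is a non-negative self-adjoint elliptic operator with $\ker\Delta=\ker P=\mathcal{K}$, and on the compact manifold $M$ we have $\Gamma(\T M)=\mathcal{K}\oplus\im\Delta$ orthogonally, so $\im\Delta=\mathcal{K}^{\perp}$. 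Given any $T\in\Gamma(\Sym^2_0\T M)$, for every Killing field $k$ we have $(\delta T,k)=(T,\delta^{*}k)=(T,\li_k g)=0$, so $\delta T\in\mathcal{K}^{\perp}=\im\Delta$; choose $X\in\mathcal{K}^{\perp}$ with $\Delta X=\tfrac12\delta T$, i.e. $\delta S_X=\delta T$. Then $\delta(T-S_X)=0$, hence $T-S_X\in\mathcal{A}$ and $S_X\in\mathcal{B}$, and $T=(T-S_X)+S_X\in\mathcal{A}+\mathcal{B}$. Combined with $\mathcal{A}\cap\mathcal{B}=0$ and the orthogonality above, this gives \eqref{decS}.

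The only steps that are not pure bookkeeping are the principal-symbol computation for $P$ (straightforward once carried out in an adapted orthonormal frame) and the invocation of Obata's theorem, which is the sole place where the hypotheses $\scal>0$ and $M\not\cong S^7$ are used; I expect the latter to be the conceptual crux, everything else reducing to the adjunction $(\delta^{*}X,S)=(X,\delta S)$ and the vanishing of $(\delta T,k)$ on Killing fields.
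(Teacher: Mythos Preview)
Your proof is correct and follows the same overall strategy as the paper—show that the operator $X\mapsto S_X$ has kernel exactly $\mathcal{K}$ via Obata, and then use surjectivity of $X\mapsto\delta S_X$ onto $\mathcal{K}^{\perp}$ to produce the splitting—but the execution differs in two places worth noting. First, the paper computes $\delta S_X=AX$ explicitly through Bochner's formula, obtaining $A=\tfrac12\Delta-\Ric+\tfrac{5}{14}\di\di^{*}$, and then checks invertibility of $A$ on $\mathcal{K}^{\perp}$ by splitting $\Omega^1$ into co-closed and exact pieces; on the exact piece Obata enters as the invertibility of $\Delta-\tfrac{\scal}{6}$ on functions. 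You bypass this computation entirely by packaging everything as Hodge theory for the elliptic operator $P^{*}P$, invoking Obata instead in its ``conformal Killing fields are Killing'' form to identify $\ker P$. Second, your observation that $(\delta T,k)=(T,\li_k g)=0$ for every Killing field $k$ shows directly that $\delta T\in\mathcal{K}^{\perp}$ for \emph{any} trace-free $T$, so the first summand $\{S:\delta S\in\mathcal{K}\}$ in \eqref{decS} is in fact just $\ker\delta$; the paper's proof does not make this explicit and instead lands on $\delta(S-S_Z)=K\in\mathcal{K}$ with $K$ a priori nonzero. Your route is slightly more conceptual and avoids the Bochner identity, while the paper's explicit formula for $A$ has the advantage of being reusable for finer spectral statements if needed later.
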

\begin{proof}
This can be extracted from \cite[Thm.4.60]{Besse} and related material in that reference. In fact the statement is true 
in arbitrary dimension. We outline some of the arguments required mainly for the convenience of the reader.

Computation using Bochner's formula on $1$-forms yields 
$$ 
\delta(\li_X\!g)=(\Delta-2\Ric)X+\di\di^{\star}\!X
$$
with $X \in \Gamma(\T M)$. Thus 
\begin{equation} \label{delSX}
\delta S_X = \delta(\tfrac12 \li_Xg +\tfrac17 \di^*\!X\,g) = \tfrac12 \Delta X - \mathrm{Ric} X  + \tfrac12 \di\!\di^*\!X- \tfrac17 \di\! \di^*\!X
=:AX
\end{equation}
where the operator $A:\Gamma(\T M) \to \Gamma(\T M)$ is given by $A=\tfrac12 \Delta  - \mathrm{Ric}   + \tfrac{5}{14} \di\!\di^*$. 
Because $g$ is an Einstein metric $A$ preserves the splitting $\Omega^1(M)=\ker(\di^{\star}) \oplus \im \di$. On $\ker(\di^{\star})$ it is clear that $\ker(A)=\Kk$. At the same time 
we have $A \circ \di=\frac{6}{7}\di(\Delta-\frac{\scal}{6})$ on the space $\Cc^{\infty}(M)$. As $\scal>0$ and $g$ does not have constant sectional curvature Obata's Theorem ensures that the operator 
$\Delta-\frac{\scal}{6}$ is invertible on $\Cc^{\infty}(M)$ thus $A$ is invertible on $\mathcal{K}^{\perp}$. In particular the map $X \in \mathcal{K}^{\perp} \mapsto S_X$ injective. To conclude, pick $S \in \Gamma(\Sym^2_0 \T M)$ and decompose  the vector field $\delta S$ as  $\delta S=K+Y$ with $K \in \Kk$ and $Y \in \Kk^{\perp}$ and
choose $Z \in \Kk^{\perp}$ such that $AZ=Y$. By \eqref{delSX} we have $\delta(S-S_Z)=K+Y-AZ=K \in \mathcal{K}$ and the claim is proved. 
\end{proof}

It turns out that the splitting \eqref{decS} replicates at the level of $4$-forms, in a way which is consistent with both the algebraic isomorphism $\Sym_0^2\T  \to \Lambda_{27}^4$ and the Hodge decomposition for $D$.
To describe how this works consider the  spaces 
\begin{equation*} 
\E:=\{(\li_X\psi)_{27} : X \in \Kk^{\perp}\}
\qquad \mbox{and} \qquad
 \F:=\{\alpha \in \Omega^4_{27} : \di^{\star}_7 \alpha \in \Kk \lrcorner \psi \} .
\end{equation*}
For ease of reference we write $\Kk \lrcorner \psi = \{ X \lrcorner \psi : X \in \Kk\}$ and
$\Omega^4_{7^{\prime}}:=\Kk^{\perp} \w \varphi$, as well as $\Omega^{4}_{1 \oplus 7^{\prime}}=\Omega^4_1 \oplus \Omega^4_{7^{\prime}}$. 
\begin{lema} \label{dec1}
We have an $L^2$-orthogonal splitting 
\begin{equation} \label{main-s}
\Omega^4(M) = (\Kk \w \varphi) \oplus (\Omega^4_{1 \oplus 7^{\prime}} \oplus \E) \oplus \F.
\end{equation}
\end{lema}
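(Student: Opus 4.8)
The plan is to verify the decomposition \eqref{main-s} piece by piece, checking mutual $L^2$-orthogonality and then exhaustion (spanning all of $\Omega^4(M)$). I would start from the coarsest decomposition by $G_2$-type, $\Omega^4(M)=\Omega^4_1\oplus\Omega^4_7\oplus\Omega^4_{27}$, which is already $L^2$-orthogonal. The $\Omega^4_7$ summand is $\Gamma(\T M)\wedge\varphi$, and splitting $\Gamma(\T M)=\Kk\oplus\Kk^\perp$ splits $\Omega^4_7=(\Kk\wedge\varphi)\oplus\Omega^4_{7'}$ orthogonally (the map $X\mapsto X\wedge\varphi$ is a pointwise isometry up to scale, so it carries an $L^2$-orthogonal splitting of vector fields to one of $4$-forms). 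Thus the real content is in reorganising $\Omega^4_1\oplus\Omega^4_{27}$ and in the interplay of the various pieces with $\Kk\wedge\varphi$.

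Next I would analyse $\Omega^4_{27}$ via the algebraic isomorphism $i:\Sym^2_0\T\to\Lambda^4_{27}$ (more precisely $S\mapsto S_\star\psi$, which is $i(S)$ up to the Hodge star, cf. \eqref{idg21}), so that $\Omega^4_{27}\cong\Gamma(\Sym^2_0\T M)$ as $L^2$-spaces up to a constant factor. Under this identification, Proposition \ref{inv-Q} gives the orthogonal splitting $\Gamma(\Sym^2_0M)=\{S:\delta S\in\Kk\}\oplus\Kk^\perp$, where $\Kk^\perp\hookrightarrow\Gamma(\Sym^2_0M)$ via $X\mapsto S_X$. The image of the second summand is exactly $\E=\{(\li_X\psi)_{27}:X\in\Kk^\perp\}$: indeed by Lemma \ref{lie2}, $(\li_X\psi)_{27}=(S_X)_\star\psi$, so $\E$ corresponds to $\{S_X:X\in\Kk^\perp\}$. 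The image of the first summand is characterised by the condition $\delta S\in\Kk$, and by Lemma \ref{deltaS} we have $\di^\star_7(S_\star\psi)=\tfrac12(\delta S)\lrcorner\psi$, so $\{S_\star\psi:\delta S\in\Kk\}=\{\alpha\in\Omega^4_{27}:\di^\star_7\alpha\in\Kk\lrcorner\psi\}=\F$. Hence $\Omega^4_{27}=\E\oplus\F$ orthogonally, and combining with the orthogonality of $\Omega^4_1,\Omega^4_7,\Omega^4_{27}$ and the splitting of $\Omega^4_7$ gives
\[
\Omega^4(M)=(\Kk\wedge\varphi)\oplus(\Omega^4_{1\oplus7'}\oplus\E)\oplus\F
\]
as vector spaces with all the indicated summands pairwise $L^2$-orthogonal \emph{except possibly} for cross-terms between $\E$ and $\Omega^4_{7'}$, or between $\F$ and $\Omega^4_{7'}$ or $\Omega^4_1$ — but these vanish because $\E,\F\subseteq\Omega^4_{27}$ while $\Omega^4_{1\oplus7'}\subseteq\Omega^4_1\oplus\Omega^4_7$, which is $L^2$-orthogonal to $\Omega^4_{27}$ by type. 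The only subtlety is the grouping $(\Omega^4_{1\oplus7'}\oplus\E)$ versus $\F$: here $\Omega^4_{1\oplus7'}$ lives in types $1$ and $7$ and $\E$ in type $27$, so $\Omega^4_{1\oplus7'}\perp\E$ automatically, and the whole bracket is orthogonal to $\F$ since $\F\subseteq\Omega^4_{27}$ and $\F\perp\E$ inside $\Omega^4_{27}$.

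The main obstacle I anticipate is making sure that Proposition \ref{inv-Q} applies, i.e. that the nearly $G_2$ metric $g$ is not isometric to the round sphere — but a proper nearly $G_2$ structure has a one-dimensional space of Killing spinors, whereas the round $S^7$ has the maximal possible dimension, so this is automatic. A secondary point to check carefully is that $\E$ is well-defined as a subspace of $\Omega^4_{27}$, i.e. that $(\li_X\psi)_{27}$ depends only on $X$ and vanishes precisely when $S_X=0$; by Lemma \ref{lie2} this is $(S_X)_\star\psi$, and injectivity of $X\mapsto S_X$ on $\Kk^\perp$ from Proposition \ref{inv-Q} together with injectivity of $S\mapsto S_\star\psi$ on $\Sym^2_0\T$ settles it. Finally one should note that \eqref{gen} ($\Kk\subseteq\aut(M,\varphi)$) is not actually needed for this lemma — it only enters later — but it does no harm to record that under it $\li_X\psi=0$ for $X\in\Kk$, consistent with $\Kk\wedge\varphi$ being a genuinely different summand from $\E$.
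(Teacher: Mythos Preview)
Your proof is correct and follows essentially the same route as the paper's: both reduce to showing $\Omega^4_{27}=\E\oplus\F$ and invoke Proposition \ref{inv-Q} together with Lemmas \ref{lie2} and \ref{deltaS} to identify the two summands of \eqref{decS} with $\F$ and $\E$ respectively. The only cosmetic difference is that the paper checks $\E\perp\F$ directly by integration by parts, $(\alpha,(\li_X\psi)_{27})=(\di^{\star}_7\alpha,X\lrcorner\psi)=0$, whereas you transport the $L^2$-orthogonality in \eqref{decS} through the (conformal) isometry $S\mapsto S_{\star}\psi$.
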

\begin{proof}
It is enough to check that $\Omega^4_{27}=\E \oplus \F$. Clearly $\E$ and $\F$ are $L^2$-orthogonal. Indeed for $\alpha\in \F$ and
$X\in \Kk^\perp$, i.e.  $(\li_X \psi)_{27} \in \E$ we compute
$$
(\alpha, (\li_X \psi)_{27} ) = (\alpha, \li_X \psi) = (\alpha, \di (X \lrcorner \psi)) = (\di^{\star}_7\!\psi, X \lrcorner \psi) = 0
$$
according to the definition of $\F$.

\medskip

That $\E \oplus \F$ spans 
$\Omega^4_{27}$ is a consequence of Proposition \ref{inv-Q} as outlined below.   
Let $\alpha=S_\ast \psi$ belong to $\Omega^4_{27}$. 
Decomposing the tensor field $S=\widetilde{S}+S_X$ according to \eqref{decS} we find 
\begin{equation*}
\alpha=\widetilde{S}_{\star}\psi+(S_X)_{\star}\psi=\widetilde{S}_{\star}\psi+(\li_X\psi)_{27}
\end{equation*}
after also taking into account \eqref{lie0}. The last summand belongs, by definition, to $\E$. To conclude we use \eqref{ds7} to check that  
$
\di_{7}^{\star}(\widetilde{S}_\ast \psi) = \tfrac12 \delta \widetilde{S}  \lrcorner \psi \in \Kk \lrcorner \psi.$ 
Thus $\widetilde{S}_\ast \psi \in \F$ and the claim is proved.
\end{proof}

The next objective is to determine how the decomposition above is acted on by the operators $D$ and $D^{\star}$. This is based on the following 
\begin{pro} \label{inv-D}
We have 
\begin{itemize}
\item[(i)] the splitting \eqref{main-s} is preserved by the operators $D$ and $D^{\star}$
\item[(ii)] $D$ is self-adjoint on $\F$, that is $D^{\star}_{\vert \F}=D_{\vert \F}$.
\end{itemize}
\end{pro}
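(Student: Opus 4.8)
The plan is to establish part (i) summand by summand and then deduce (ii) as a by-product of the explicit formulas obtained along the way. The main tool will be Lemma \ref{st1}, which identifies $D(\li_X\psi)=\di((\di^{\star}X)\varphi)$, together with the properness hypothesis \eqref{gen}: since $\li_X\varphi=\li_X\psi=0$ for $X\in\Kk$, the summand $\Kk\w\varphi$ is mapped to zero by $D$ (indeed $\widehat{X\w\varphi}$ is, up to sign and type, a multiple of $\star(X\w\varphi)=X\lrcorner\psi$, and $D(\li_X\psi)=0$ by Lemma \ref{st1} applied to a Killing field), so $\Kk\w\varphi\subseteq\ker D$. Dually one must check $\Kk\w\varphi$ is $D^{\star}$-invariant; this follows because its $L^2$-orthogonal complement $\Omega^4_{1\oplus 7'}\oplus\E\oplus\F$ will be shown $D$-invariant, and $D$-invariance of a subspace is equivalent to $D^{\star}$-invariance of its orthogonal complement. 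So it suffices to prove that $\Omega^4_{1\oplus 7'}\oplus\E$ and $\F$ are each $D$-invariant and each $D^{\star}$-invariant.

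\textbf{Invariance of $\F$.} For $\alpha\in\F$, write $D\alpha=\di\widehat{\alpha}-\tau_0\alpha$ and decompose into types. Since $\alpha\in\Omega^4_{27}$ we have $\widehat{\alpha}=-\star\alpha\in\Omega^3_{27}$ by \eqref{hat3}, and the type components of $\di$ on $\Omega^3_{27}$ are governed by formulas analogous to those in Section \ref{lie}; the key point is to show $D\alpha$ again lies in $\Omega^4_{27}$ with $\di^{\star}_7(D\alpha)\in\Kk\lrcorner\psi$. The $\Omega^4_1$ and $\Omega^4_{7}$ components of $\di\widehat{\alpha}$ are controlled by $\di^{\star}$ acting on $\Omega^3_{27}$, which by Lemma \ref{deltaS} (transported through $i$ and $\star$) is expressed via a divergence of a symmetric tensor; the defining condition of $\F$ is exactly what forces this divergence to be Killing, hence the $7$-component of $D\alpha$ stays in $\Kk\lrcorner\psi$ and the $\Omega^4_1$-component, being the image of a divergence integrated against the constant function, must vanish. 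That $D$ is \emph{self-adjoint} on $\F$ then follows from $\widehat{\alpha}=-\star\alpha$: for $\alpha,\beta\in\F$ one computes $(\di\widehat{\alpha},\beta)=-(\di\star\alpha,\beta)=-(\star\alpha,\di^{\star}\beta)$ and, using $\star^2=\id$ on the relevant forms together with $\di^{\star}=\pm\star\di\star$, symmetrizes this in $\alpha,\beta$; the $\tau_0$ term is manifestly symmetric. This simultaneously proves $\F$ is $D^{\star}$-invariant (as $D^{\star}_{|\F}=D_{|\F}$) and gives (ii).

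\textbf{Invariance of $\Omega^4_{1\oplus 7'}\oplus\E$.} Here the generators are $\psi$ itself, elements $X\w\varphi$ with $X\in\Kk^{\perp}$, and $(\li_X\psi)_{27}$ with $X\in\Kk^{\perp}$. One computes $D\psi$ directly ($\widehat{\psi}=-\star\psi=-\varphi$, so $D\psi=-\di\varphi-\tau_0\psi=-2\tau_0\psi\in\Omega^4_1$). For $X\w\varphi$ one uses $\widehat{X\w\varphi}=\star(X\w\varphi)=X\lrcorner\psi$ (up to the sign from \eqref{hat3}, since $X\w\varphi\in\Omega^3_7$) and then Lemma \ref{lie2} to expand $\di(X\lrcorner\psi)$ and $\li_X\psi$ in terms of $(\di^{\star}X)\psi$, $(L(X)-\tfrac{\tau_0}{4}X)\w\varphi$ and $(S_X)_{\star}\psi$; the upshot is that $D(X\w\varphi)$ is a combination of $\psi$, a $1$-form wedge $\varphi$, and $(\li_{X'}\psi)_{27}$ for suitable $X'$, all with $X'\in\Kk^{\perp}$ provided one shows the operators $L$ and $X\mapsto S_X$ preserve $\Kk^{\perp}$ — which holds because $\Kk$ is their common kernel (on $\Kk$, $\di^{\star}X=0$, $L(X)=0$ and $S_X=0$) and they are formally self-adjoint up to lower-order Einstein terms, as in Proposition \ref{inv-Q}. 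For $(\li_X\psi)_{27}$ one writes it as $\li_X\psi$ minus its $1\oplus 7'$ part and uses Lemma \ref{st1}: $D(\li_X\psi)=\di((\di^{\star}X)\varphi)$, which again lands in $\Omega^4_{1\oplus 7'}\oplus\E$ by the same expansion. $D^{\star}$-invariance of this summand then follows from $D$-invariance of its complement $\F$ (already shown).

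\textbf{Main obstacle.} The routine-but-delicate core is the bookkeeping in the second bullet: verifying that the first-order operators $L$ and $X\mapsto S_X$, and the coupling between the $\Omega^4_{7'}$ and $\E$ pieces produced by $\di$ and $\li_X$, genuinely close up within $\Kk^{\perp}$ rather than leaking a $\Kk$-component. This is where properness and Obata's theorem (via Proposition \ref{inv-Q}, in particular the invertibility of $A$ on $\Kk^{\perp}$) are essential: they guarantee the map $X\mapsto S_X$ is injective on $\Kk^{\perp}$ with image transverse to $\{S:\delta S\in\Kk\}$, which is exactly the algebraic statement that makes $\E$ and $\F$ complementary and each invariant. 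Everything else is a finite computation with the structure equations $\nabla\psi=-\tfrac{\tau_0}{4}\,\cdot\,\w\varphi$ and the type-decomposition identities recorded in Section \ref{prel}.
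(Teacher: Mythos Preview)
Your proposal has the right architecture but contains several concrete errors and one genuine gap.

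\textbf{Factual slips.} First, $\Kk\wedge\varphi$ is not contained in $\ker D$: from $D(X\wedge\varphi)=-\li_X\psi-\tau_0 X\wedge\varphi$ and $\li_K\psi=0$ for $K\in\Kk$ one gets $D=-\tau_0\,\id$ there, not zero. This is harmless for invariance but your stated reason is wrong. Second, you assert $L(X)=0$ for $X\in\Kk$; in fact \eqref{lie0} with $\li_X\psi=0$ forces $\tfrac12 L(X)-\tfrac{\tau_0}{4}X=0$, so $L(K)=\tfrac{\tau_0}{2}K$. Thus $\Kk$ is \emph{not} the kernel of $L$, and your justification that $L$ preserves $\Kk^{\perp}$ (``common kernel plus formally self-adjoint up to Einstein terms'') collapses. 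Third, in the $\F$-invariance paragraph you speak of ``the $7$-component of $D\alpha$ staying in $\Kk\lrcorner\psi$''; but $\Kk\lrcorner\psi\subset\Omega^3_7$ while $(D\alpha)_7\in\Omega^4_7$, so this is a type confusion. What you actually need is $(D\alpha)_1=(D\alpha)_7=0$ \emph{and} $\di^{\star}_7(D\alpha)\in\Kk\lrcorner\psi$, and your sketch establishes neither.

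\textbf{The real gap, and how the paper closes it.} The substantive missing step is showing $(\li_X\psi)_7\in\Kk^{\perp}\wedge\varphi$ for $X\in\Kk^{\perp}$, equivalently that the combination $\tfrac12 L(X)-\tfrac{\tau_0}{4}X$ lands in $\Kk^{\perp}$. The paper does not attempt to analyse $L$ at all; instead it checks the single $L^2$-identity
\[
(\li_X\psi,\,K\wedge\varphi)=(X\lrcorner\psi,\,\di^{\star}(K\wedge\varphi))=-(X\lrcorner\psi,\,\star\li_K\psi)=0
\]
for $K\in\Kk$, using properness. This one line replaces your entire ``bookkeeping'' obstacle. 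Similarly, for $D\F\subseteq\F$ the paper does \emph{not} compute type components of $\di\widehat{\alpha}$; it first proves $D(\Omega^4_{1\oplus 7'}\oplus\E)\subseteq\Omega^4_{1\oplus 7'}\oplus\E$, and then shows $D\F\perp\E$, $D\F\perp\Omega^4_7$, $D\F\perp\Omega^4_1$ by short $L^2$ pairings, each reducing to properness or to $\alpha\in\Omega^4_{27}$. Your order (do $\F$ first, directly) forfeits this leverage. Once $D\F\subseteq\F$ is known, part (ii) is immediate from $\star\di^{\star}=\di\star$ on $\Omega^4$, exactly as you indicate.
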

\begin{proof}
(i) first we check that $D$ preserves  $\Omega^4_1\oplus (\Kk^{\perp} \w \varphi) \oplus \E$, the second summand of the
splitting \eqref{main-s}. By direct computation we obtain
\begin{equation} \label{block1}
D(f\psi)=-\di\!f \w \varphi- 2\tau_0 f\psi\\
\qquad \mbox{and} \qquad
D(X \w \varphi)=- \li_X \psi- \tau_0X \w \varphi
\end{equation}
with $(f,X) \in \Cc^{\infty}(M) \times \Gamma(\T M)$.  Note that $\di\!f \in \Kk^\perp$ since Killing vector fields are co-closed. Clearly $\li_X \psi \in \Omega^4_{1 \oplus 7} \oplus \E$. Moreover we see
$$ 
( \li_X\psi, K \w \varphi )= (X \lrcorner \psi, \di^{\star}(K \w \varphi))=-(X \lrcorner \psi,\star \li_K \psi)=0
$$
for all $K \in \Kk$ by using \eqref{gen}. Thus 
\begin{equation} \label{inc0}
\li_X \psi \in \Omega^4_{1 \oplus 7^{\prime}} \oplus \E.
\end{equation}
Combining this observation with \eqref{block1} leads to 
\begin{equation} \label{inc1}
D(\Omega^4_{1 \oplus 7^{\prime}}) \subseteq \Omega^4_{1 \oplus 7^{\prime}} \oplus \E.
\end{equation} 
It remains to consider the action of $D$ on $\E$. Here we find that
$$
D((\li _Y \psi)_{27})=D(\li_Y \psi)-D((\li_X \psi)_{1 \oplus 7^{\prime}}) \in \Omega^4_{1 \oplus 7^{\prime}} \oplus \E \ ,
$$ 
by using Lemma \ref{st1} and \eqref{inc1} above. 

\medskip

The first summand in    \eqref{main-s} is preserved by $D$ because of the  second equation in
\eqref{block1} and $\li_X \psi = 0$ for  Killing vector fields $X$.

\medskip

Finally we have to show that the operator $D$ also preserves $\F$, i.e. the third summand of  the decomposition  \eqref{main-s}. Here we take $(\alpha,X) \in \F \times \Kk^{\perp}$ and compute the $L^2$-product
\begin{equation*}
( D\alpha, (\li_X \psi)_{27})= -(\di \star \alpha, (\li_X \psi)_{27})= -(\alpha, \star \di^{\star}
((\li_X \psi)_{27})) =(\alpha, D((\li_X \psi)_{27})).
\end{equation*} 
where we have taken into account that $\alpha \perp (\li_X \psi)_{27}$, which is true since $\F \perp \E$ and  $\alpha\in \F$, $ (\li_X \psi)_{27}\in \E$ by assumption. 
As showed above 
$D \E \subseteq \Omega^4_{1 \oplus 7'} \oplus \E $ which  is orthogonal to $\F$. Thus the equation above shows $D\F \perp \E$, i.e. we already have 
$D\F \subseteq \Omega^4_{1 \oplus 7} \oplus \F$. Next we will show that $D\F$ is orthogonal to $\Omega^4_{7'}$, i.e. to forms $X \w \varphi$ with $X \in \Kk^\perp$.
For a  $\alpha \in \F \subset \Omega^4_{27}$ we have $D\alpha = - \di\! \star \alpha - \tau_0 \alpha $. Hence, with   $X\in \Kk^\perp$ we obtain
$$
(D\alpha, X \w \varphi) = -(\di\! \star  \alpha - \tau_0 \alpha, X \w \varphi) = -(\star \di\! \star \alpha, \star(X \w \varphi)) = (\di^*_7\! \alpha, X \lrcorner \psi ) = 0
$$
by the defining condition of $\F$. But $D\alpha$ is also orthogonal to any $4$-form $X \w \varphi$ for $X\in \Kk$. Indeed
if $X \in \Kk$ we have $\li_X\psi=0$, since the structure is proper, and we obtain for any $\alpha\in \Omega^4_{27}$ that
$$
(D\alpha, X \w \varphi) = - (\di\! \star \alpha,  X \w \varphi ) =  (\di^*\! \alpha, X \lrcorner \psi) =  (\alpha, \di(X \lrcorner \psi)) = (\alpha, \li_X \psi) = 0 \ .
$$
There remains to prove that $D\F$ is orthogonal to $\Omega^4_1$. Letting  $f$ be some function on $M$ and 
$\alpha\in \F$ we  similarly compute
$$
(D\alpha, f\psi) = -(\di \star \alpha, f\psi ) = - (\di^*\! \alpha, f\varphi) = -(\alpha, \di(f\varphi)) = - (\alpha, \di\! f \w \varphi + f\tau_0 \psi) = 0 \ .
$$
 Thus $D\F \subseteq \F$ as claimed.  Since the splitting \eqref{main-s} is $L^2$-orthogonal and preserved by $D$ it must also be preserved by $D^{\star}$.
\medskip 

(ii) follows from $\star \di^{\star}=\di \star$ on $\Omega^4(M)$ and $D\F \subseteq \F$.
\end{proof}

\medskip
The infinitesimal deformation space of the nearly $G_2$ structure is defined according to 
\begin{equation*}
\F_4:=\ker(D) \cap \Omega^4_{27}.
\end{equation*}
Rewriting
\begin{equation*}
\F_4=\{\alpha \in \Omega^4_{27} : \star \,\di\, \star \, \alpha = - \tau_0 \star \alpha \}=\{
\alpha \in \Omega^4_{27} : \di^*\!\alpha = - \tau_0 \star \alpha \}
\end{equation*}
has several consequences. Firstly $\di_7^{\star}$ vanishes automatically on $\F_4$, thus $\F_4 \subseteq \F$; secondly 
$$\F_3 := \star \F_4=\{\beta \in \Omega_{27}^3 : \star \di \beta=-\tau_0 \beta \}$$ is exactly the space of infinitesimal deformations considered in \cite{AlS}. Lastly, $\F_4$ is a subspace
of the eigenspace of the Laplace operator acting on $4$-forms for the eigenvalue $\tau^2_0$. In particular $\F_4$ is finite dimensional.

\medskip

An important first consequence of Proposition \ref{inv-D} is the following 
\begin{coro}We have 
\begin{equation*} 
\ker(D)=\ker(D_{\vert \Omega^4_{1 \oplus 7^{\prime}} \oplus \E}) \oplus \mathcal{F}_4
\end{equation*}
as well as 
\begin{equation*} 
\ker(D^{\star})=\ker(D^{\star}_{\vert \Omega^4_{1 \oplus 7^{\prime}} \oplus \E}) \oplus \mathcal{F}_4.
\end{equation*}
\end{coro}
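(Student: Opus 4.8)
The plan is to read off both equalities directly from Proposition \ref{inv-D} together with the block description of $D$ furnished by \eqref{main-s} and \eqref{block1}. The point is that the splitting
\begin{equation*}
\Omega^4(M) = (\Kk \w \varphi) \oplus (\Omega^4_{1 \oplus 7^{\prime}} \oplus \E) \oplus \F
\end{equation*}
is $L^2$-orthogonal and preserved by both $D$ and $D^{\star}$, so the kernel of either operator is the direct sum of the kernels of its restrictions to the three summands. Hence it suffices to treat the pieces $\Kk \w \varphi$ and $\F$ separately, and to identify $\ker(D) \cap \F$ and $\ker(D^{\star})\cap \F$ with $\F_4$.

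First I would dispose of the summand $\Kk \w \varphi$. For $X \in \Kk$ the second formula in \eqref{block1} together with $\li_X \psi = 0$ (the structure is proper) gives $D(X \w \varphi) = -\tau_0 X \w \varphi$, so $D$ restricted to $\Kk \w \varphi$ is $-\tau_0\,\id$, which is invertible since $\tau_0 \neq 0$; thus this summand contributes nothing to $\ker(D)$, and by the same token nothing to $\ker(D^{\star})$ (it is preserved, and the restriction of $D^{\star}$ is $(-\tau_0\id)^{\star} = -\tau_0\id$). Next, for the summand $\F$ I would invoke part (ii) of Proposition \ref{inv-D}, which says $D^{\star}_{\vert \F} = D_{\vert \F}$, so $\ker(D)\cap \F = \ker(D^{\star})\cap \F$; call this common space $\F'$. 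It remains to check $\F' = \F_4$. The inclusion $\F_4 \subseteq \F'$ is immediate: $\F_4 = \ker(D)\cap \Omega^4_{27}$ and, as observed just after the definition of $\F_4$, $\di^{\star}_7$ vanishes on $\F_4$ so $\F_4 \subseteq \F$. Conversely $\F \subseteq \Omega^4_{27}$ by definition, so $\F' = \ker(D) \cap \F \subseteq \ker(D)\cap \Omega^4_{27} = \F_4$.

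Putting these together, $\ker(D) = \bigl(\ker(D)\cap(\Kk\w\varphi)\bigr) \oplus \ker(D_{\vert \Omega^4_{1\oplus 7^{\prime}}\oplus\E}) \oplus \bigl(\ker(D)\cap\F\bigr) = 0 \oplus \ker(D_{\vert \Omega^4_{1\oplus 7^{\prime}}\oplus\E}) \oplus \F_4$, and identically for $D^{\star}$ using $\ker(D^{\star})\cap\F = \F_4$ and that $D^{\star}$ also annihilates nothing in $\Kk\w\varphi$. I do not anticipate a genuine obstacle here — the corollary is essentially bookkeeping on top of Proposition \ref{inv-D} — but the one point that deserves care is making sure the summand $\Kk\w\varphi$ really drops out of \emph{both} kernels; this is where properness of the structure (hence $\li_X\psi=0$ for $X\in\Kk$) and $\tau_0\neq 0$ are used, and it is worth stating explicitly rather than folding it silently into the $\Omega^4_{1\oplus 7^{\prime}}\oplus\E$ block.
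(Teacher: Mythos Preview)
Your proposal is correct and follows essentially the same approach as the paper: split the kernel along the $D$- and $D^{\star}$-invariant decomposition \eqref{main-s}, kill the $\Kk\wedge\varphi$ block using $D=-\tau_0\,\id$ there, and use self-adjointness of $D$ on $\F$ for the second equality. You are simply more explicit than the paper in verifying $\ker(D)\cap\F=\F_4$ via the two inclusions and in noting that $D^{\star}$ also acts as $-\tau_0\,\id$ on $\Kk\wedge\varphi$; the paper leaves both of these implicit.
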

\begin{proof}
We have already seen that $D=- \tau_0\,\id$ on $\Kk \wedge \varphi$. Thus $D$ has no kernel on the first summand of  \eqref{main-s} and the statement
follows since $D$ preserves
the decomposition  \eqref{main-s}. The second part of the claim follows from having the restriction of $D$ to $\F$ self-adjoint. 
\end{proof}

It is now straightforward to determine the action of $D$ on $\Omega^4_{1 \oplus 7^{\prime}} \oplus \E$ as follows. The main observation here is that 
\begin{pro} We have an identification map 
\begin{equation} \label{id}
(f,X,Y) \in \Cc^{\infty}(M) \oplus \K^{\perp} \oplus \Kk^{\perp} \mapsto f\psi+X \wedge \varphi+\li_Y\! \psi \in \Omega^4_{1 \oplus 7^{\prime}} \oplus \E
\end{equation}
w.r.t. which 
\begin{equation} \label{Ds1}
D(f,X,Y)=(\tau_0 \di^{\star}Y- 2\tau_0 f,\di(\di^{\star}Y-f)-\tau_0X,-X).
\end{equation}
\end{pro}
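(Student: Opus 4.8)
The plan is to first check that \eqref{id} really is a linear isomorphism onto $\Omega^4_{1 \oplus 7^{\prime}} \oplus \E$, and then to compute $D$ separately on each of the three building blocks $f\psi$, $X \w \varphi$ and $\li_Y\psi$ and reassemble the result in the same parametrisation. By Proposition \ref{inv-D}(i) the operator $D$ preserves $\Omega^4_{1 \oplus 7^{\prime}} \oplus \E$, so it makes sense to transport it through \eqref{id}, and by $\mathbb{R}$-linearity it is enough to treat the three blocks one by one.

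For the isomorphism, surjectivity is essentially a matter of definitions: the forms $f\psi$ exhaust $\Omega^4_1$, the forms $X \w \varphi$ with $X \in \Kk^{\perp}$ exhaust $\Omega^4_{7^{\prime}}$, and for $Y \in \Kk^{\perp}$ the inclusion \eqref{inc0} lets us write $\li_Y\psi = (\li_Y\psi)_1 + (\li_Y\psi)_{7^{\prime}} + (\li_Y\psi)_{27}$ with the last summand sweeping out all of $\E$; absorbing the first two summands into $f\psi$ and $X \w \varphi$ then shows $f\psi + X \w \varphi + \li_Y\psi$ ranges over $\Omega^4_{1 \oplus 7^{\prime}} \oplus \E$. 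For injectivity I would argue that if $f\psi + X \w \varphi + \li_Y\psi = 0$, projecting onto $\Omega^4_{27}$ kills the first two terms and gives $(\li_Y\psi)_{27} = 0$; by \eqref{lie0} this is $(S_Y)_{\star}\psi = 0$, hence $S_Y = 0$ because $S \mapsto S_{\star}\psi = -\star i(S)$ is injective by \eqref{idg21} and the invertibility of $i$, and then $Y = 0$ by Proposition \ref{inv-Q}. What remains, $f\psi + X \w \varphi = 0$, forces $f = 0$ and $X = 0$ on projecting to $\Omega^4_1$ and $\Omega^4_7$ (wedging with $\varphi$ being injective on $1$-forms).

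Next I would compute $D$ on the three blocks. The first two are immediate from \eqref{block1}: $D(f\psi) = -\di f \w \varphi - 2\tau_0 f\psi$ and $D(X \w \varphi) = -\li_X\psi - \tau_0 X \w \varphi$. For the third, Lemma \ref{st1} gives $D(\li_Y\psi) = \di\bigl((\di^{\star}Y)\varphi\bigr)$, which by the Leibniz rule together with the structure equation $\di\varphi = \tau_0\psi$ equals $\di(\di^{\star}Y) \w \varphi + \tau_0 (\di^{\star}Y)\psi$. Adding the three contributions and sorting by type yields
\begin{equation*}
D\bigl(f\psi + X \w \varphi + \li_Y\psi\bigr) = \tau_0(\di^{\star}Y - 2f)\,\psi + \bigl(\di(\di^{\star}Y - f) - \tau_0 X\bigr) \w \varphi - \li_X\psi ,
\end{equation*}
which, reading $-\li_X\psi = \li_{-X}\psi$, is exactly \eqref{Ds1}.

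The one point that needs a little care — and the only real obstacle in an otherwise bookkeeping argument — is checking that the output genuinely lands in the parametrised space, i.e. that the new $1$-form component $\di(\di^{\star}Y - f)$ lies in $\Kk^{\perp}$. This holds because Killing fields are co-closed, so exact $1$-forms are $L^2$-orthogonal to $\Kk$; likewise $-X \in \Kk^{\perp}$ is clear. (Incidentally this re-derives, for the block $\Omega^4_{1 \oplus 7^{\prime}} \oplus \E$, the $D$-invariance already recorded in Proposition \ref{inv-D}(i).)
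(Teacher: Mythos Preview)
Your proof is correct and follows essentially the same approach as the paper: injectivity via projection to $\Omega^4_{27}$ and Proposition \ref{inv-Q}, surjectivity via \eqref{inc0} and the definition of $\E$, and the formula \eqref{Ds1} from Lemma \ref{st1} combined with \eqref{block1}. Your write-up is in fact more explicit than the paper's (which simply says the formula ``is now granted by Lemma \ref{st1} and \eqref{block1}''), and your closing check that $\di(\di^{\star}Y - f) \in \Kk^{\perp}$ is a nice extra sanity step the paper leaves implicit via Proposition \ref{inv-D}(i).
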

\begin{proof}
For ease of reference indicate with $\iota$ the map in \eqref{id}. Clearly $\iota(f,X,Y)=0$ forces $(\li_Y \psi)_{27}=0$ thus $S_Y=0$ by \eqref{lie0}. By Proposition \ref{inv-Q} it follows that $Y=0$ which ensures that $f$ and $X$ vanish as well.
That $\iota$ is surjective follows, via the definition of $\E$,  from $(\li_X \! \psi)_{27}=\li_X\! \psi-(\li_X \! \psi)_{1\oplus 7^{\prime}}$ whenever $X \in \Kk^{\perp}$, which allows absorbing the $27$-component of $\li_X\!\psi$ into $\E$. Note that we also use \eqref{inc0} to see that $\li_X\! \psi$ has no component on $\K \w \varphi$.
The claim in \eqref{Ds1} is now granted by Lemma \ref{st1} and \eqref{block1}.
\end{proof}
An easy argument based on \eqref{Ds1} shows that 
\begin{equation} \label{kD}
\ker(D)=\{\li_X \psi : X \in \Kk^{\perp}, \di^{\star}X=0 \} \oplus \mathcal{F}_4.
\end{equation}
In particular,  this decomposition shows that infinitesimal deformations can be  normalised,  up to the action of volume preserving  diffeomorphisms,  to lie in $\F_4$.
This is consistent with the  approach in \cite{AlS}  where the first variation of nearly $G_2$ metrics has been normalised to lie in the Ebin slice for Einstein metrics, thus leading to the identification of the space of infinitesimal deformations with $\F_3=\star \F_4$.

\medskip

%
\subsection{Computation of $\ker(D^{\star})$} \label{kerds}
%
Computing the kernel of $D^{\star}$ on $\Omega^4_{1 \oplus 7} \oplus \E$ requires a bit more work as the map in \eqref{id} is {\it{not}} an isometry w.r.t. the canonical $L^2$-inner product on $\Cc^{\infty}(M) \times \Kk^{\perp} \times \Kk^{\perp}$.
The restriction of $D^{\star}$ to $\Omega^4_{1 \oplus 7^{\prime}} \oplus \E$ can be understood similarly to the restriction of $D$, using a slightly different parametrisation for the latter space. 
\begin{lema} \label{lLl}
The spaces spanned by $\di(\K \w \varphi)$ and $\K^{\perp} \w \psi$ are $L^2$-orthogonal. 
\end{lema}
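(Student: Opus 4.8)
The statement asserts that the $L^2$-span of $\di(\K\w\varphi)$ is orthogonal to $\K^{\perp}\w\psi$. Since $\di$ and $\di^{\star}$ are $L^2$-adjoints, the plan is to move $\di$ across the inner product: for $K\in\K$ and $Z\in\K^{\perp}$, compute
\[
(\di(K\w\varphi),\ Z\w\psi) = (K\w\varphi,\ \di^{\star}(Z\w\psi)).
\]
So the task reduces to showing that $\di^{\star}(Z\w\psi)$ is $L^2$-orthogonal to $\K\w\varphi$ for every $Z\in\K^{\perp}$. Here I would invoke Lemma~\ref{lie2}, specifically the formula \eqref{lie3}, which expresses $\di^{\star}(Z\w\psi)$ as a sum of a multiple of $\psi$ (type $1$), a $4$-form of type $7$ built from $(-\tfrac12 L(Z)-\tfrac{3\tau_0}{4}Z)\w\varphi$, and $(S_Z)_{\star}\psi$ (type $27$). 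Pairing against $K\w\varphi\in\Omega^4_7$, the type-$1$ and type-$27$ components drop out immediately, so the only surviving term is the pairing of $(-\tfrac12 L(Z)-\tfrac{3\tau_0}{4}Z)\w\varphi$ with $K\w\varphi$, which via the algebraic normalisation of the wedge with $\varphi$ is a constant multiple of the $L^2$ inner product $(-\tfrac12 L(Z)-\tfrac{3\tau_0}{4}Z,\ K)$ of vector fields.

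It thus remains to see that this vector-field inner product vanishes. The term $(Z,K)$ vanishes by the very definition of $\K^{\perp}$. For the term $(L(Z),K)$ I would argue that $L$ is, up to a first-order adjustment, self-adjoint-compatible with the splitting $\Gamma(\T M)=\K\oplus\K^{\perp}$; more precisely, since $\di_7 X=\tfrac13 L(X)\lrcorner\varphi$ and $\di_7$ acts within $\Omega^2_7$, one checks that $(L(Z),K)=3(\di_7 Z,\ K\lrcorner\varphi)\cdot c$ for a normalisation constant $c$ coming from $|v\lrcorner\varphi|^2$ being proportional to $|v|^2$, and then $(\di_7 Z,K\lrcorner\varphi)=(\di Z,K\lrcorner\varphi)=(Z,\di^{\star}(K\lrcorner\varphi))$. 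Now $K$ is Killing, hence $\li_K$ annihilates $\varphi$ (as the structure is proper, or directly because Killing fields preserve the metric and $\varphi$ is recovered algebraically from the metric and orientation once we are in the proper case), so $\di^{\star}(K\lrcorner\varphi)=-\star\di(K\w\psi)\,$-type manipulations reduce it to $\li_K\varphi$ up to sign, which is zero; alternatively one uses that Killing fields are coclosed and that $\di_7(\K)=0$ because the $7$-part of $\nabla K$ vanishes for a Killing field on a nearly $G_2$ manifold. Either way $(L(Z),K)=0$, and the proof is complete.

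\textbf{Main obstacle.} The only delicate point is the second paragraph: pinning down precisely why $(L(Z),K)=0$ for $Z\in\K^{\perp}$ and $K\in\K$. The cleanest route is probably to observe that $L(X)$ depends only on $\di_7 X$, that $\di_7$ is (a component of) exterior differentiation followed by an algebraic projection, and that for a Killing field $K$ one has $\di_7 K=0$ — this last fact follows because on a nearly $G_2$ manifold the skew part of $\nabla K$ lies in $\Omega^2_{14}$ (Killing fields correspond to infinitesimal automorphisms of $\varphi$ in the proper case, and $\Omega^2_{14}=\aut(\varphi)$ at the pointwise Lie-algebra level). Granting $\di_7 K=0$, self-adjointness of the relevant operator then transfers the vanishing to the pairing $(L(Z),K)$. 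If one prefers to avoid the representation-theoretic input, an alternative is to expand $(L(Z),K)$ directly using the definition of $L$ and integrate by parts against the Killing equation for $K$; this is more computational but entirely routine. I do not expect any genuine difficulty beyond bookkeeping of the type decomposition and the normalising constants $|v\lrcorner\varphi|^2=c\,|v|^2$.
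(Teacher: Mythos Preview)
Your strategy---move $\di$ across via adjointness, apply \eqref{lie3}, and reduce to $(L(Z),K)=0$---is sound and does lead to a proof, but your justification of the key step contains a genuine error. You assert that $\di_7 K=0$ for a Killing field $K$ on a nearly $G_2$ manifold. This is false: from \eqref{lie0} together with $\li_K\psi=0$, $\di^{\star}K=0$ and $S_K=0$ one reads off $\tfrac12 L(K)-\tfrac{\tau_0}{4}K=0$, i.e.\ $L(K)=\tfrac{\tau_0}{2}K$ and hence $\di_7K=\tfrac{\tau_0}{6}K\lrcorner\varphi\neq 0$ in general. The pointwise claim that ``the skew part of $\nabla K$ lies in $\Omega^2_{14}$'' is therefore incorrect.

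The fix is immediate once you use the correct value of $L(K)$. The operator $L$ is $L^2$-self-adjoint: since $\star(Y\lrcorner\varphi)=Y\w\psi$ and $\di\psi=0$, one has $(L(X),Y)=\int\di X\w Y\w\psi=\int\di Y\w X\w\psi=(L(Y),X)$. Combining self-adjointness with $L(K)=\tfrac{\tau_0}{2}K\in\K$ gives $(L(Z),K)=(Z,L(K))=\tfrac{\tau_0}{2}(Z,K)=0$ for $Z\in\K^{\perp}$, and your argument concludes.

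For comparison, the paper takes the opposite route: rather than moving $\di$ onto $Z\w\psi$, it computes $\di(K\w\varphi)$ directly. Using $\di_7K=\tfrac{\tau_0}{6}K\lrcorner\varphi$ and the identity $\alpha\w\varphi=\star\alpha$ on $\Lambda^2_{14}$, one gets $\di(K\w\varphi)=\star\di_{14}K+\tfrac{\tau_0}{6}(K\lrcorner\varphi)\w\varphi-\tau_0K\w\psi$, and then the \emph{pointwise} inner product $\langle Y\w\psi,\di(K\w\varphi)\rangle=-4\tau_0\,g(K,Y)$ follows from elementary algebra; integrating with $Y\in\K^{\perp}$ finishes. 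Your approach trades this explicit algebra for an integration-by-parts and the formula \eqref{lie3}; both are short, but note that the paper's pointwise identity is slightly stronger than the $L^2$ statement you aim for.
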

\begin{proof}
Pick $K \in \K$; since $\li_K\psi = 0$  we have $\di_7K=\frac{\tau_0}{6}K \lrcorner \varphi$ as guaranteed by \eqref{lie0}. Thus
\bea
\di (K \w \varphi)&=& \di\!K \w \varphi \,-\, \tau_0 K \w \psi \;=\; (\di_{14}K) \w \varphi \,+\, (\di_7K) \w \varphi-\tau_0 K \w \psi \\[.5ex]
&=&
\star \di_{14}K \,+\, \tfrac{\tau_0}{6}(K \lrcorner \varphi) \w \varphi  \,-\, \tau_0 K \w \psi
\eea 
by taking into account that $\Lambda_{14}^2=\{\alpha \in \Lambda^2 : \alpha \w \varphi=\star \alpha\}$. 
Hence the scalar product  reads
\bea
\langle Y \w \psi,\di(K \w \varphi)\rangle &=& \langle \star (Y \i \varphi), \di(K \w \varphi)\rangle \;=\; \tfrac{\tau_0}{6}
\langle \star (Y \i \varphi), (K \lrcorner \varphi) \w \varphi\rangle  \,-\,  \tau_0 \langle Y \lrcorner \varphi, K \lrcorner \varphi \rangle\\[.5ex]
&=&  
 -\tfrac{4\tau_0}{3} \langle Y \i \varphi, K \i  \varphi \rangle  \:=\;
-4\tau_0 g(K,Y)
\eea
with $Y \in \Gamma(\T M)$ and the claim follows by integration, whilst taking $Y \in \K^{\perp}$.
\end{proof}
\begin{pro} \label{kerDs4}
We have
\begin{equation*}
\ker(D^{\star})=\{     \tau_0 Y \w \varphi  +  \di^{\star}(Y \wedge \psi) : Y \in \Kk^{\perp}, \di^{\star}Y=0\} \oplus \F_4.
\end{equation*}
\end{pro}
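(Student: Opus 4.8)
The plan is to use the Corollary above to reduce to the middle summand $V:=\Omega^4_{1\oplus 7'}\oplus\E$ and then to prove $\ker(D^{\star}_{\vert V})=\mathcal{N}$, where $\mathcal{N}:=\{\tau_0 Y\w\varphi+\di^{\star}(Y\w\psi):Y\in\Kk^{\perp},\ \di^{\star}Y=0\}$. I would first check $\mathcal{N}\subseteq V$: for $Y\in\Kk^{\perp}$, equation \eqref{lie3} shows that the $27$-component of $\di^{\star}(Y\w\psi)$ equals $(S_Y)_{\star}\psi=(\li_Y\psi)_{27}\in\E$, while by Lemma \ref{lLl} one has $(K\w\varphi,\di^{\star}(Y\w\psi))=(\di(K\w\varphi),Y\w\psi)=0$ for all $K\in\Kk$, so $\di^{\star}(Y\w\psi)\perp\Kk\w\varphi$ and its $7$-component lies in $\Omega^4_{7'}$. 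Hence $\di^{\star}(Y\w\psi)\in V$, so $\mathcal{N}\subseteq V$; in particular $\mathcal{N}\perp\F_4$ and the asserted sum is orthogonal.

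For the inclusion $\mathcal{N}\subseteq\ker(D^{\star})$ I would establish two formulas. First, for any $\eta\in\Omega^5(M)$, $D^{\star}(\di^{\star}\eta)=-\tau_0\,\di^{\star}\eta$: this follows from $\di D\gamma=\di\di\widehat{\gamma}-\tau_0\di\gamma=-\tau_0\di\gamma$ by passing to $L^2$-adjoints, since $(\di^{\star}\eta,D\gamma)=(\eta,\di D\gamma)=-\tau_0(\di^{\star}\eta,\gamma)$. Secondly, for $Y\in\Kk^{\perp}$, $D^{\star}(Y\w\varphi)=\di^{\star}(Y\w\psi)-(\di^{\star}Y)\psi$: this I would obtain by a direct computation of the formal adjoint of $D$, using \eqref{hat3} (and that $\star$ is an involution in dimension $7$ commuting with the type decomposition), the identity $\di^{\star}(Y\w\varphi)=-\star\li_Y\psi$ (from $\di\psi=0$, Cartan's formula and $\star(Y\w\varphi)=-Y\lrcorner\psi$), and a comparison with \eqref{lie0}–\eqref{lie3}. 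Combining the two with $\eta=Y\w\psi$ gives $D^{\star}(\tau_0 Y\w\varphi+\di^{\star}(Y\w\psi))=-\tau_0(\di^{\star}Y)\psi$, which vanishes precisely when $\di^{\star}Y=0$; since $D^{\star}=D=0$ on $\F_4$ this proves $\mathcal{N}\oplus\F_4\subseteq\ker(D^{\star})$.

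For the reverse inclusion the point, as the text signals, is that \eqref{id} is not an $L^2$-isometry, so rather than transpose \eqref{Ds1} I would describe $V$ by the $D^{\star}$-adapted parametrisation
\[
\lambda:\Cc^{\infty}(M)\oplus\Kk^{\perp}\oplus\Kk^{\perp}\longrightarrow V,\qquad \lambda(f,X,Y)=f\psi+X\w\varphi+\di^{\star}(Y\w\psi),
\]
which is a linear bijection much as \eqref{id} is: its $\E$-component $(S_Y)_{\star}\psi$ recovers $Y$ by Proposition \ref{inv-Q} and then $f,X$ are determined, while surjectivity uses the definition of $\E$ together with the fact (just proved, via Lemma \ref{lLl}) that $\di^{\star}(\Kk^{\perp}\w\psi)\subseteq V$. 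Recording also $D^{\star}(f\psi)=\di f\w\varphi-2\tau_0 f\psi$ (derived the same way from $\di^{\star}(f\psi)=\tau_0 f\varphi-(\di f)\lrcorner\psi$ and $\di\varphi=\tau_0\psi$; note $\di f\in\Kk^{\perp}$ since Killing fields are co-closed), and using $\di^{\star}(X\w\psi)-\tau_0\di^{\star}(Y\w\psi)=\di^{\star}((X-\tau_0 Y)\w\psi)$, a short computation matching the three type components yields
\[
D^{\star}\lambda(f,X,Y)=\lambda\big(-2\tau_0 f-\di^{\star}X,\ \di f,\ X-\tau_0 Y\big).
\]
By injectivity of $\lambda$, $\lambda(f,X,Y)\in\ker(D^{\star})$ forces $\di f=0$ (so $f$ is constant), $X=\tau_0 Y$, and $-2\tau_0 f-\di^{\star}X=0$; the last reads $\di^{\star}Y=-2f$, and integrating over $M$ gives $f=0$, hence $\di^{\star}Y=0$ and $X=\tau_0 Y$. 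Therefore $\ker(D^{\star}_{\vert V})=\{\tau_0 Y\w\varphi+\di^{\star}(Y\w\psi):Y\in\Kk^{\perp},\ \di^{\star}Y=0\}=\mathcal{N}$, which with the Corollary is the assertion.

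The genuine work sits in the sign-careful evaluation of the codifferentials $\di^{\star}(f\psi)$, $\di^{\star}(Y\w\varphi)$, $\di^{\star}(Y\w\psi)$ in dimension $7$ (where $\di^{\star}=\star\di\star$ on $\Omega^4$ but a different sign governs $\Omega^5$), in the bookkeeping of the $\Omega^4_1\oplus\Omega^4_{7'}\oplus\E$ components needed to obtain the displayed formula for $D^{\star}\lambda$, and in verifying carefully that $\lambda$ is genuinely onto $V$; once these are in place, everything else is formal manipulation with the already-established structure of $D$ and the type decompositions.
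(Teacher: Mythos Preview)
Your proof is correct and follows essentially the same route as the paper: the same parametrisation $\lambda(f,X,Y)=f\psi+X\wedge\varphi+\di^{\star}(Y\wedge\psi)$ of $\Omega^4_{1\oplus 7'}\oplus\E$ (justified via \eqref{lie3} and Lemma~\ref{lLl}), the same identity $D^{\star}\di^{\star}=-\tau_0\di^{\star}$ on $\Omega^5$, and the same reduction via the $\Omega^4_{27}$-component and Proposition~\ref{inv-Q} to $X=\tau_0 Y$, then $f=0$ and $\di^{\star}Y=0$. Your intermediate formula $D^{\star}(X\wedge\varphi)=\di^{\star}(X\wedge\psi)-(\di^{\star}X)\psi$ differs from the paper's stated coefficient $+\tfrac{1}{7}$; comparing the type components in \eqref{lie0} and \eqref{lie3} against the paper's own expressions for $D^{\star}_1,D^{\star}_7,D^{\star}_{27}$ actually yields your $-1$, and in any case either constant leads to the same conclusion once one integrates $\di^{\star}X=\mathrm{const}\cdot f$ over $M$.
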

\begin{proof}
Pick $\alpha \in \Omega^4_{1 \oplus 7^{\prime}} \oplus \E$. Since we are ultimately interested in the space spanned by $\di^{\star}\!\alpha$ 
it is convenient to parametrise 
$\alpha=f\psi+X \w \varphi+\di^{\star}(Y \w \psi)$ with $(f,X,Y) \in \Cc^{\infty}(M) \times \Kk^{\perp} \times \Kk^{\perp}$. The existence proof relies on \eqref{lie3} which ensures that $\di_{27}^{\star}(Y \w \psi)=(\li_Y\! \psi)_{27}$ and on having $\di^{\star}(Y \w \psi)$ 
orthogonal to $\K \w \varphi$, as granted by Lemma \ref{lLl}.
Uniqueness is entirely similar to the argument used to establish \ref{id}. Because $\di \circ D=-\tau_0 \di$ we have 
$$D^{\star}\circ \di^{\star}=- \tau_0 \di^{\star} \ \mbox{on} \ \Omega^5(M).
$$ 
On the other hand, using \eqref{hat3} of Lemma \ref{hat} and an easy $L^2$-orthogonality argument we compute the three components
of $D^*(X \wedge \varphi)$  in $\Omega^4(M)$  as
\begin{equation*} 
D^{\star}_{1}(X \w \varphi)=(\li_X \psi)_1, \;  \; D^{\star}_{7}(X \w \varphi)=-(\li_X\psi)_{7}-\tau_0 \,X \w \varphi, \; \;D^{\star}_{27}(X \w \varphi)=(\li_X\psi)_{27}.
\end{equation*}
This leads to 
\begin{equation*}
D^{\star}(X \w \varphi)=\di^{\star}(X \w \psi)+\tfrac17(\di^{\star}\!X)\psi
\end{equation*}
after comparing the type components in $\li_X\!\psi$ and $\di^{\star}(X \w \psi)$ according to \eqref{lie0} and \eqref{lie3}.
On the other hand
\begin{equation*}
D^{\star}(f\psi)=\di\!f \w \varphi-  2\tau_0 f\psi
\end{equation*} 
by direct computation.
Thus having $D^{\star}\alpha=0$ for a  form $\alpha \in \Omega^4_{1 \oplus 7^{\prime}} \oplus \E$ reads, in terms of the  parametrisation above,
$$ 
\di^{\star}\!((X-\tau_0 Y) \w \psi)+\di\! f\w \varphi+(\tfrac17\di^{\star}X  - 2\tau_0 f) \psi=0.
$$
Projecting the last equation onto $\Omega^4_{27}$ shows, by using \eqref{lie3}, that $(S_{X-\tau_0 Y})_\ast \psi=0$, i.e. $S_{X-\tau_0Y}=0$. 
Since  $X, Y \in \Kk^\perp$ by assumption, it follows that $X=\tau_0Y$ by Proposition \ref{inv-Q}. Clearly this entails $f=0$ and $\di^{\star}X=0$ and the claim is proved.
\end{proof}
In particular,  we obtain 
\begin{coro} \label{kdlast}
\begin{equation} \label{kD11}
\di^{\star}\ker(D^{\star})=\{\di^{\star}(Y \wedge \varphi) : Y \in \Kk^{\perp}, \di^{\star}Y=0\} \oplus \F_3.
\end{equation}
\end{coro}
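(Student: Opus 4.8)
The plan is to obtain \eqref{kD11} by applying $\di^{\star}$ to the explicit description of $\ker(D^{\star})$ furnished by Proposition~\ref{kerDs4} and simplifying. Write that description as $\ker(D^{\star})=\V\oplus\F_4$ with
\[
\V=\{\tau_0 Y\w\varphi+\di^{\star}(Y\w\psi)\ :\ Y\in\Kk^{\perp},\ \di^{\star}Y=0\}.
\]

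On the summand $\V$ the key point is that $\di^{\star}\circ\di^{\star}=0$, which annihilates the auxiliary term $\di^{\star}(Y\w\psi)$ and leaves
\[
\di^{\star}\bigl(\tau_0 Y\w\varphi+\di^{\star}(Y\w\psi)\bigr)=\tau_0\,\di^{\star}(Y\w\varphi)
\]
for every admissible $Y$. Since $\tau_0\in\mathbb{R}^{\times}$, this yields $\di^{\star}\V=\{\di^{\star}(Y\w\varphi):Y\in\Kk^{\perp},\ \di^{\star}Y=0\}$. On the summand $\F_4$ I would invoke the reformulation $\F_4=\{\alpha\in\Omega^4_{27}:\di^{\star}\alpha=-\tau_0\star\alpha\}$ noted earlier, which gives $\di^{\star}\F_4=-\tau_0\star\F_4=-\tau_0\F_3=\F_3$, using that $\F_3=\star\F_4$ is a linear subspace. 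Thus $\di^{\star}\ker(D^{\star})$ is the sum of $\{\di^{\star}(Y\w\varphi):\dots\}$ and $\F_3$, and it remains to check that the sum is direct.

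For this I would verify that the two summands are $L^2$-orthogonal. Let $\beta\in\F_3$; then $\star\di\beta=-\tau_0\beta$, so $\di\beta=\star(\star\di\beta)=-\tau_0\star\beta$, which lies in $\Omega^4_{27}$ because $\beta\in\Omega^3_{27}$, whereas $Y\w\varphi\in\Omega^4_7$. Consequently, for any $Y\in\Kk^{\perp}$ with $\di^{\star}Y=0$,
\[
\bigl(\beta,\,\di^{\star}(Y\w\varphi)\bigr)=\bigl(\di\beta,\,Y\w\varphi\bigr)=-\tau_0\bigl(\star\beta,\,Y\w\varphi\bigr)=0,
\]
since $\Omega^4_{27}$ and $\Omega^4_7$ are $L^2$-orthogonal. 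Combining the three computations gives \eqref{kD11}.

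Given the preceding results there is no genuine difficulty; the only two points that require attention are that $\di^{\star}$ kills the term $\di^{\star}(Y\w\psi)$ inside $\ker(D^{\star})$, and that the directness of the final decomposition has to be proved separately, since $\di^{\star}$ need not preserve the orthogonal splitting \eqref{main-s} on which Proposition~\ref{kerDs4} rests.
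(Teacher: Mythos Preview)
Your proposal is correct and follows the same approach as the paper: apply $\di^{\star}$ to the description of $\ker(D^{\star})$ in Proposition~\ref{kerDs4}, use $\di^{\star}\circ\di^{\star}=0$ on the first summand, and use $\di^{\star}\alpha=-\tau_0\star\alpha$ on $\F_4$. The paper's own proof is terser---it records only the computation $\di^{\star}\F_4=\F_3$ and leaves the image of the first summand and the directness of the resulting sum implicit---so your explicit verification of $L^2$-orthogonality between $\F_3$ and $\{\di^{\star}(Y\w\varphi)\}$ is a welcome clarification rather than a departure.
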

\begin{proof}
Because of Proposition \ref{kerDs4} we only need to check that  $\di^{\star}\mathcal{F}_4=\F_3$ which follows from 
having $\di^{\star}\alpha = -  \tau_0 \star \alpha$ whenever $\alpha$ belongs to $\F_4$. 
\end{proof}
\begin{rema}
For the non-proper nearly $G_2$ structures supported by $3$-Sasaki and Sasaki Einstein metrics
parts of the infinitesimal deformation space 
$\mathcal{F}_4$ have been explicitly computed in cohomological terms by C.van Coevering \cite{coev2}.
\end{rema}

%
%
\section{Proof of Theorem \ref{main}} \label{def2}
%
%

To capture explicitly the properties of the subset of $\ker(D)$ consisting of second order unobstructed  deformations, 
we consider again the Kuranishi-type map 
$\mathbb{K}:\F_4 \to \F^*_4$ as  introduced in \eqref{K1}.
Recall that the map $\mathbb{K}$ depends quadratically on its first argument. We see that a $4$-form $\alpha$ belongs to the zero locus $\mathbb{K}^{-1}(0)$ if and only if
the corresponding $3$-form $Q_2(\alpha)$ is orthogonal to the space $\F_3$. 

\medskip

We first establish the following preliminary 
\begin{lema} \label{dQ2}
Whenever $\alpha \in \F_4$ we have 
\begin{equation*}
\di_7(Q_2(\alpha))=\tfrac{1}{4}\di \vert \alpha \vert^2 \w \varphi.
\end{equation*}
\end{lema}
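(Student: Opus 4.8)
The plan is to relate the divergence-type operator $\di_7$ applied to the $3$-form $Q_2(\alpha)$ to the algebraic data defining $Q_2$, exploiting that $\alpha \in \F_4$ satisfies the strong first-order identity $\di^{\star}\alpha = -\tau_0 \star\alpha$ (equivalently $\di\star\alpha = -\tau_0\,\widehat{\alpha}$ up to sign, since $\star\alpha = -\widehat{\alpha}$ on $\Omega^4_{27}$). Recall from Proposition~\ref{pol} that $Q_2(\alpha) = -i(q_0(\alpha,\alpha)) + \tfrac27|\alpha|^2\varphi$, so $Q_2(\alpha)$ has a $\Lambda^3_1$-part proportional to $|\alpha|^2\varphi$ and a $\Lambda^3_{27}$-part $-i(q_0(\alpha,\alpha))$. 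The $\Lambda^3_7$-component of $\di(Q_2(\alpha))$ — which is what $\di_7$ extracts after applying $\di$ — will be shown to have the stated form. First I would differentiate the defining relation \eqref{q2} for $b_2$, i.e. the pointwise identity $Q_2(\alpha)\w(v\lrcorner\psi) + 2\,\widehat{\alpha}\w(v\lrcorner\alpha)=0$, applying $\di$ and using the nearly $G_2$ structure equation together with $\nabla_U\psi = -\tfrac{\tau_0}{4}U\w\varphi$. This turns the differentiated identity into an algebraic constraint on $\di(Q_2(\alpha))$ in terms of $\di\widehat{\alpha}$, $\di\alpha$, and lower-order terms.

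The key computational step: apply $\di$ to \eqref{q2} with $\alpha_1=\alpha_2=\alpha$. Using the Leibniz rule for $\di$ on wedge products and the covariant-constancy-type formula for $\psi$, the term $\di\big(Q_2(\alpha)\w(v\lrcorner\psi)\big)$ produces $\di(Q_2(\alpha))\w(v\lrcorner\psi)$ plus a correction from $\di(v\lrcorner\psi)$; similarly for $\widehat{\alpha}\w(v\lrcorner\alpha)$. Crucially, on $\F_4$ one has $\di\widehat{\alpha} = \di(-\star\alpha) = \tau_0\alpha$ (this is exactly $D\alpha = 0$), which eliminates the highest-order unknown and leaves a relation of the schematic form
\begin{equation*}
\di(Q_2(\alpha))\w(v\lrcorner\psi) = (\text{explicit terms in } \alpha, v, \tau_0).
\end{equation*}
Then I would invoke Lemma~\ref{a1}: any $4$-form $\eta = \lambda\psi + V\w\varphi + \eta_{27}$ satisfies $\eta\w(v\lrcorner\psi) = -4g(V,v)\vol$, so wedging $\di(Q_2(\alpha))\w(v\lrcorner\psi)$ detects precisely the $\Omega^4_{7}$-component of $\di(Q_2(\alpha))$, namely $\di_7(Q_2(\alpha))\w\varphi$ up to the identification $\Omega^4_{7'} \ni V\w\varphi$. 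Matching this against the right-hand side of the differentiated identity should produce $V = \tfrac14\grad|\alpha|^2$, giving the claim. Alternatively — and perhaps more cleanly — I would use the formula $Q(\alpha) = -2\langle q(\alpha,\alpha), i^{-1}(\star\alpha)\rangle$ from Proposition~\ref{pol}(ii) together with the identity $\di(Q_2(\alpha)\w\alpha) = \di(Q(\alpha)\vol) = 0$ (it is a top form on a compact... no, pointwise it's $\di$ of a $7$-form composed with lower terms), so I would instead compute $\di(Q_2(\alpha))\w\alpha + Q_2(\alpha)\w\di\alpha$ and use $\di\alpha$'s structure on $\F_4$.

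The main obstacle I anticipate is bookkeeping the correction terms arising from $\di(v\lrcorner\psi)$ and $\di(v\lrcorner\alpha)$ when $v$ is a local frame vector rather than a parallel one: one must either work with a suitably contracted/frame-independent version of \eqref{q2} (contracting against $A_\star$ as in Lemma~\ref{injective}'s proof, turning the $v$-dependence into an $\End\T$-dependence that behaves better under $\di$) or carefully track the $\nabla v$ terms and show they reorganize into the $\tfrac14\di|\alpha|^2\w\varphi$ term plus pieces that vanish by type or by the $\F_4$-condition $\di^{\star}_7\alpha = 0$ (which holds since $\F_4 \subseteq \F$, established in the excerpt). A secondary subtlety is that $\di_7$ as used in the statement is $(\di(\cdot))_7$ after applying $\di$, so I must confirm which projection is meant and that the $\Lambda^3 \to \Lambda^4$ step $\beta \mapsto (\di\beta)_7$ is indeed computed by wedging with $v\lrcorner\psi$ as in Lemma~\ref{a1}; this is the natural reading and the identity $\varphi\w(v\lrcorner\psi) = -4\,v\lrcorner\vol$ makes it precise. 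Once the frame-dependence is handled, the rest should be a direct consequence of $D\alpha = 0$ and the algebraic identities \eqref{idg21}, \eqref{s-star}.
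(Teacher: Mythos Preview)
Your approach is essentially the paper's own: differentiate the defining relation \eqref{q2}, use $D\alpha=0$ and $\nabla_U\psi=-\tfrac{\tau_0}{4}U\w\varphi$, and then read off the $\Omega^4_7$-component via Lemma~\ref{a1}. The bookkeeping obstacle you anticipate is resolved in the paper exactly as you suggest, by covariantly differentiating along a frame direction $e_i$ and then wedging with $e_i$ (so that $e_i\w(X\lrcorner\nabla_{e_i}\alpha)=\nabla_X\alpha-X\lrcorner\di\alpha=\nabla_X\alpha$ and similarly for $\psi$); the two remaining nontrivial observations are that $\di\widehat{\alpha}\w(X\lrcorner\alpha)=\tau_0\,\alpha\w(X\lrcorner\alpha)=0$ and that the term $Q_2(\alpha)\w X\w\varphi$ vanishes by Proposition~\ref{pol}(i).
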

\begin{proof}
 $X \in \Gamma(\T M)$. Differentiate the defining equation
\begin{equation} \label{Q2} 
Q_2(\alpha) \w (X \lrcorner \psi) \,+\,
2\widehat{\alpha} \w (X \lrcorner \alpha)=0
\end{equation}
 in direction of $e_i$ to obtain 
\begin{equation*}
\nabla_{e_i}(Q_2(\alpha)) \w (X \lrcorner \psi) \,+\, Q_2(\alpha) \w (X \lrcorner \nabla_{e_i}\psi) \,+\,
2\nabla_{e_i}\widehat{\alpha} \w (X \lrcorner \alpha) \,+\, 2\widehat{\alpha} \w (X \lrcorner \nabla_{e_i}\alpha)
\,=\,0.
\end{equation*} 
Note  that $e_i \w (X \lrcorner \nabla_{e_i}\alpha)=\nabla_X\alpha-X \lrcorner \di \alpha = \nabla_X\alpha$ since $\di \widehat{\alpha}=\tau_0 \alpha$ forces $\di \alpha=0$. Similarly we have 
$e_i \w (X \lrcorner \nabla_{e_i}\psi)=\nabla_X\psi$ since $\psi$ is closed. Taking the exterior product with $e_i$ in the displayed equation 
above whilst taking into account that $\nabla_X \psi=-\frac{\tau_0}{4}X \w \varphi$ we arrive at 
$$ 
\di(Q_2(\alpha)) \w (X \lrcorner \psi) \,+\, \tfrac{\tau_0}{4}Q_2(\alpha) \w X \w \varphi
\,+\,
2\di \widehat{\alpha} \w (X \lrcorner \alpha) \,-\, 2\widehat{\alpha} \w \nabla_X\alpha\,=\,0.
$$
As $D\alpha=0$ we have $\di \widehat{\alpha} \w (X \lrcorner \alpha)=\tau_0 \alpha \w (X \lrcorner \alpha)=0$ since $\alpha \w \alpha \in \Omega^8(M)=0$ and $\alpha$ has even degree. Having $\alpha \in \Omega^4_{27}$ ensures that 
$\widehat{\alpha}=-\star \alpha$ thus 
$$\widehat{\alpha} \w \nabla_X\alpha=-g(\alpha, \nabla_X\alpha)\vol=-\tfrac{1}{2}\di \vert \alpha \vert^2(X)\vol.$$ Summarising we obtain
\begin{equation} \label{prell}
\di(Q_2(\alpha)) \w (X \lrcorner \psi) \,+\, \tfrac{\tau_0}{4}Q_2(\alpha) \w X \w \varphi \,+\, (\di \vert \alpha \vert^2)(X)\vol\,=\,0.
\end{equation}
The second (algebraic) summand vanishes since $Q_2(\alpha)$ is orthogonal to $\Lambda^3_7$ due to Proposition \ref{pol},(i).
The claim finally follows from \eqref{prell} and Lemma \ref{a1}.
\end{proof}

The full description of normalised infinitesimal deformations which are unobstructed to second order is contained below. 
\begin{teo} \label{main12}
An element $\psi_1 \in \F_4$ is unobstructed to second order if and only if
\begin{equation}
\psi_1 \in \mathbb{K}^{-1}(0).
\end{equation}
\end{teo}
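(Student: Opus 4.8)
The plan is to unwind Definition \ref{un2} using the Hodge decomposition for the sub-elliptic operator $D$. Recall that $\psi_1 \in \F_4 \subseteq \ker(D)$ is unobstructed to second order precisely when the equation $D\psi_2 = -\di Q_2(\psi_1)$ is solvable for some $\psi_2 \in \Omega^4(M)$; by the orthogonal splitting $\Omega^4(M) = \ker(D^{\star}) \oplus \im(D)$ this happens if and only if $\di Q_2(\psi_1) \perp \ker(D^{\star})$, equivalently $Q_2(\psi_1) \perp \di^{\star}\ker(D^{\star})$ in the $L^2$-inner product, which is the content of \eqref{cond}. So the whole proof reduces to checking that this orthogonality condition is equivalent to $\psi_1 \in \mathbb{K}^{-1}(0)$, i.e. to $\int_M Q_2(\psi_1) \w \gamma = 0$ for all $\gamma \in \F_4$, equivalently to $Q_2(\psi_1) \perp \F_3$.

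The key input is Corollary \ref{kdlast}, which identifies $\di^{\star}\ker(D^{\star}) = \{\di^{\star}(Y \w \varphi) : Y \in \Kk^{\perp}, \di^{\star}Y = 0\} \oplus \F_3$. Thus I need to show two things: first, that $Q_2(\psi_1)$ is automatically $L^2$-orthogonal to every $\di^{\star}(Y \w \varphi)$ with $Y$ co-closed and in $\Kk^{\perp}$; and second, that orthogonality of $Q_2(\psi_1)$ to $\F_3$ is the same as $\psi_1 \in \mathbb{K}^{-1}(0)$. The second point is essentially the definition of $\mathbb{K}$: by \eqref{K1}, $\mathbb{K}(\psi_1)\gamma = \int_M Q_2(\psi_1) \w \gamma$ for $\gamma \in \F_4$, and since $\F_3 = \star \F_4$, pairing $Q_2(\psi_1) \in \Lambda^3_{1 \oplus 27}$ against $\F_3$ via the $L^2$-inner product is the same as the wedge pairing against $\F_4$ up to the Hodge star; one must note that the $\Lambda^3_1$-component of $Q_2(\psi_1)$ contributes nothing since $\F_3 \subseteq \Omega^3_{27}$, which is clean.

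The first point is where Lemma \ref{dQ2} does the work. Integrating by parts, $(Q_2(\psi_1), \di^{\star}(Y \w \varphi)) = (\di\, Q_2(\psi_1), Y \w \varphi)$; since $Y \w \varphi \in \Omega^4_{7'}$, only the $\Omega^4_7$-component of $\di\, Q_2(\psi_1)$ can pair nontrivially with it, and by the type decomposition this is governed by $\di_7(Q_2(\psi_1))$ together with the $\Lambda^3_7$-component of $Q_2(\psi_1)$ — but the latter vanishes by Proposition \ref{pol}(i). Lemma \ref{dQ2} then gives $\di_7(Q_2(\psi_1)) = \tfrac14 \di|\psi_1|^2 \w \varphi$. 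Hence $(Q_2(\psi_1), \di^{\star}(Y \w \varphi))$ reduces, after passing through the algebraic identities relating $(\cdot) \w \varphi$ to the $7$-component, to a multiple of $(\di|\psi_1|^2 \w \varphi, Y \w \varphi)$, which by a further integration by parts equals a multiple of $(|\psi_1|^2, \di^{\star}Y) = 0$ since $Y$ is co-closed. The main obstacle I anticipate is bookkeeping: getting the constants and the precise algebraic identity relating the wedge-with-$\varphi$ map on $1$-forms to the $L^2$-pairing on $\Omega^4_7$ exactly right, and making sure the contribution of the $\Lambda^3_1$-part of $Q_2(\psi_1)$ is correctly discarded against $\F_3 \subseteq \Omega^3_{27}$. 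Once those are pinned down, the theorem follows by combining the two orthogonality facts with Corollary \ref{kdlast}.
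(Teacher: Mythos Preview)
Your proposal is correct and follows essentially the same route as the paper: reduce unobstructedness to \eqref{cond}, invoke Corollary \ref{kdlast} to split $\di^{\star}\ker(D^{\star})$, identify orthogonality to $\F_3$ with $\psi_1 \in \mathbb{K}^{-1}(0)$ by the definition of $\mathbb{K}$, and kill the remaining summand via Lemma \ref{dQ2} together with $\di^{\star}Y=0$. One minor point: the notation $\di_7(Q_2(\psi_1))$ already denotes the full $\Omega^4_7$-component of $\di Q_2(\psi_1)$, so there is no separate contribution from a hypothetical $\Lambda^3_7$-part of $Q_2(\psi_1)$ to track---your extra caution there is unnecessary (though harmless, since that part vanishes anyway by Proposition \ref{pol}(i)).
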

\begin{proof}
As we already have remarked in \eqref{cond} an infinitesimal deformation described by  $\psi_1$ is unobstructed  to second order if and only if
$Q_2(\psi_1) \perp \di^{\star}\ker(D^{\star})$.
The computation of the latter space in \eqref{kD11} leads to 
$Q_2(\psi_1) \perp \{\di^{\star}(X \w \varphi): X \in \Kk^{\perp}, \di^{\star}X=0\} \oplus \F_3$, i.e.  to
\begin{equation*}
Q_2(\psi_1) \perp  \{\di^{\star}(X \w \varphi): X \in \Kk^{\perp}, \di^{\star}X=0\}
\quad \mbox{and} \quad
Q_2(\psi_1) \perp  \F_3 .
\end{equation*}
The second requirement is by the definition of $\mathbb{K}$ equivalent to $\psi_1 \in \mathbb{K}^{-1}(0)$, whereas the 
first is trivially satisfied by Lemma \ref{dQ2}. Indeed, since $\di^*X=0$ we have
$$
(Q_2(\psi_1), \di^*(X\wedge \varphi)) = (\di_7(Q_2(\psi_1)), X\wedge \varphi) = \tfrac14 (\di|\alpha|^2 \w \varphi, X\w \varphi)= (\di|\alpha|^2, X) = 0  .
$$ 
\end{proof}
This proves Theorem \ref{main1} in the Introduction.
%
%
\section{The Aloff-Wallach space $N(1,1)$} \label{A-W}
%
%
The Aloff-Wallach space $N(1,1) $ can be  described as the normal homogeneous space 
$
G/H =  (\SU(3) \times \SU(2))/(\U(1)\times \SU(2))
$,
where the metric is a multiple of the Killing form of $ \,  \g = \su(3) \oplus \su(2)$. The Lie algebra $\g$ can be written as $\g = \h \oplus \m$. Here
$\h$ is the Lie algebra of $H$ and $\m = \h^\perp$ is its orthogonal complement which can be identified with the tangent space at the origin.
The space $\m$ splits further as $\m = \m_3 \oplus \m_4$ with 
$
\m_3= \spann \{e_1, e_2, e_3\}
$
and
$
\m_4 =  \spann \{e_4, e_5, e_6, e_7\}
$
where $e_1, \ldots, e_7$ is an  orthonormal basis of $\m$ explicitly given in matrix form in \cite{AlS}. Note that $\m_3 = \su(2)_o$ in the notation of that paper. With respect to this basis the nearly $G_2$ structure
on $N(1,1)$ is induced by $\varphi \in \Lambda^3\m$ in the standard form of section \ref{alg}. 

By our previous work in \cite{AlS} the space $\F_3=\star \F_4$ of infinitesimal deformations of this nearly $G_2$ structure is canonically identified to the 
Lie algebra $\su(3)$ by means of an explicit homomorphism $A \in \Hom_\h(\su(3), \Lambda^3_{27} \m)$ as follows. Map 
 $\xi \in \su(3)$ to a $\Lambda^3_{27} \m$ valued function on $G$ via $g\mapsto A(g^{-1} \xi g)$. Since the  $\SU(2)$-part of $G$ acts trivially on $\su(3)$ this actually lives on $SU(3)$. 
The well-known identification 
$
\Omega^3_{27}(G/H) = \Cc^\infty(G, \Lambda^3_{27} \m)^H
$ produces then an isomorphism 
$$\xi \in \su(3) \mapsto \beta_{\xi} \in \F_3.$$
W.r.t. this parametrisation the obstruction map $\mathbb{K}:\F_4 \to \F_4^{\star}$ from Theorem \ref{main12} reads 
\begin{equation} \label{intf}
 \mathbb{K}(\star \beta_{\xi})\star \beta_{\xi}=\frac{1}{\vol(U(1))}\int_{SU(3)}P(g\xi g^{-1}) \vol
\end{equation}
where the  polynomial $P:\su(3) \to \mathbb{R}$ is given by 
$
P(\xi ) = \la p(A(\xi), A(\xi)), i^{-1}(A(\xi))\ra
$
and the symmetric bilinear map $p:\Lambda^3_{27}\m \times \Lambda^3_{27}\m \to \Sym^2(\m)$ is defined according to 
$$ 
p(\gamma_1,\gamma_2)(v_1,v_2):=g(v_1 \lrcorner \gamma_1, v_2 \lrcorner \gamma_2).
$$
This follows in an essentially algebraic way from Proposition \ref{pol} and Remark \ref{rmks1},(ii).

The aim here is to show that all infinitesimal deformations in $\F_4$
are obstructed to second order. By an elementary harmonic analysis observation \cite{Fos} only the $SU(3)$-invariant piece in the polynomial 
$P$ can contribute to the integral \eqref{intf}. In turn this is determined by the scalar product of $P$ with the unique invariant  cubic polynomial on $\su(3)$ given by $\xi \mapsto i\det \xi $. 

In the next section we will show that this scalar product is non-vanishing by first computing the polynomial $P$. Equivalently 
$\mathbb{K}^{-1}(0)= \{0 \}$ thus completing the proof of Theorem \ref{main1}.

\medskip
\subsection{Computation of the obstruction polynomial} \label{obsp}
In this section all computations are performed on $\m$. It is convenient to write the $G_2$ form  $\varphi$ and its Hodge dual $\psi$ on $\m$ as
\begin{equation*}
\varphi=\vol_3 \,+\,  e^a \w \omega_a, \quad \psi=\vol_4  - \, (e^{12} \w \omega_3+e^{23} \w \omega_1+e^{31} \w \omega_2)
\end{equation*}
where we use the notation $\vol=e^{1234567}, \;   \vol_3 = e^{123}, \; \vol_4 = e^{4567}$, as well as 
$$
\omega_1=e^{45}-e^{67}, \ \omega_2=e^{46}+e^{57}, \ \omega_3=e^{47}-e^{56} \ .
$$
For $a=1,2,3$ we denote with $I_a$ the skew-symmetric endomorphisms  on $\m_4$ associated to the $2$-form $\omega_a$ via  $\omega_a(\cdot, \cdot ) = g(I_a \cdot, \cdot)$.
On basis elements these are determined from 
$I_1e_4=e_5, $ $\ I_1e_6=-e_7,$ $\ I_2e_4=e_6,$ $\ I_2e_5=e_7, 
I_3e_4=e_7$\, and $ I_3e_5=-e_6
$
and satisfy the relations $I_1I_2=-I_2I_1=-I_3$. Moreover the forms $\omega_a$ are anti-selfdual with $\omega_a\w \omega_b = - 2\delta_{ab} \vol_4$. 

\medskip

To outline how the map $A:\su(3) \to \Lambda^3_{27}\m $ is explicitly build pick $\xi \in \su(3)$ and write 
$$
\xi 
\,=\, 
\left (
\begin{array}{ccc} iv_1 &  x_1+ix_2& x_3+ix_4\\
-x_1+ix_2 & iv_2 & x_5+ix_6 \\
-x_3+ix_4 & -x_5+ix_6 & iv_3 \end{array} 
\right )
\,=\,
\left (
\begin{array}{ccc} iv_1 &- \bar z_3 & z_2\\
z_3 & iv_2 & -\bar z_1 \\
-\bar z_2 & z_1 & iv_3 \end{array} 
\right )
$$ 
with $v_1+v_2+v_3=0$ and $\, z_1= -x_5+ix_6, \,  z_2= x_3+ix_4, \,  z_3=-x_1+ix_2$. Following the detailed description in \cite{AlS} and using the matrix form of the basis 
elements $\{e_k, 1 \leq i \leq 7\}$ therein it follows that the map $A$
then breaks into three pieces 
\begin{equation*}
A(\xi) \,=s \tilde{\varphi} \; -\; \tfrac{5}{3}y \w \Omega \; +\, \tfrac{\sqrt{5}}{3\sqrt{2}}C(x).
\end{equation*}
Here $ \;\tilde{\varphi}:=\varphi-7\vol_3 \in  (\Lambda^1\m_3 \otimes \Lambda^2\m_4) \oplus \Lambda^3\m_3$, \; $\Omega:=e^{45}+e^{67} \in \Lambda^2\m_4$ and 
\begin{equation*}
\begin{split}
&y \,:=\,  \tfrac{v_1-v_2}{2}e_1-x_1e_2+x_2e_3\:   \in \, \m_3 \quad \\
&x \,:=\, x_3e_5-x_4e_4+x_5e_7-x_6e_6  \;\in\, \m_4 \\
&s:=\frac{v_1+v_2}{2}.
\end{split}
\end{equation*}
The last component in $A(\xi)$ belongs to $\Lambda^3\m_4\, \oplus \,(\Lambda^2\m_3 \otimes \Lambda^2 \m_4)$ and reads 
\begin{equation} \label{par1}
C(x) \,=\, 3x \lrcorner \vol_4 \,+\,e^{12} \w (x \lrcorner \omega_3)+e^{23} \w (x \lrcorner \omega_1)+e^{31} \wedge (x \lrcorner \omega_2)
\,=\, x \lrcorner (4\vol_4-\psi).
\end{equation}

\medskip

In order to streamline the computations below we will write $I_y=y_aI_a$ whenever $y=y_a e_a \in \m_3$. The symmetric tensor product on vectors is defined according to the convention   
$v \odot w=\tfrac12(v \otimes w+w \otimes v)$. Finally we denote with $J$  the complex structure on $\m_4$ 
defined via $g(J \cdot , \cdot ) = \Omega(\cdot , \cdot )$.  The restriction of the quadratic map $p$ to the subspace 
$\spann \{A(\xi) : \xi \in \su(3) \} \subseteq \Lambda^3_{27}\m$ 
is fully determined as follows. 
\begin{lema} \label{Lp}
The components of  $p(A(\xi),A(\xi))$ are given by\\[-.5ex]
\begin{equation*}
\begin{split}
&p(\tilde{\varphi},C(x)) = -4I_a x    \odot e_a \;\in\, \m_3 \odot \m_4  \\
&p(C(x),C(x)) = 2\vert x \vert^2\id_3 \;+\; 10(\,\vert x \vert^2\id_4 - x \otimes x)  \; \in\, \mathbb{R}\id_3 \oplus \Sym^2(\m_4) \\
&p(y \w \Omega,C(x)) = 6 y \odot Jx \in \m_3 \odot \m_4 \\
&p(\widetilde{\varphi},y \w \Omega) =   -JI_y    \;  \in\, \Sym^2_0\m_4 \\
&p(\widetilde{\varphi},\widetilde{\varphi}) =  38\id_3+3\id_{4}.
\end{split}
\end{equation*}
\end{lema}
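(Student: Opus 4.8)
The statement is a list of five identities for the components of the quadratic map $p$ evaluated on the image of $A$, so the plan is essentially a bookkeeping exercise organised around the block decomposition $\m = \m_3 \oplus \m_4$ together with the three-piece structure $A(\xi) = s\widetilde\varphi - \tfrac53 y\w\Omega + \tfrac{\sqrt5}{3\sqrt2}C(x)$. Recall that $p(\gamma_1,\gamma_2)(v_1,v_2) = g(v_1\lrcorner\gamma_1, v_2\lrcorner\gamma_2)$, so the natural strategy is to compute, once and for all, the contractions $e_a\lrcorner \widetilde\varphi$, $e_a\lrcorner(y\w\Omega)$, $e_a\lrcorner C(x)$ for $a=1,2,3$ and likewise $e_i\lrcorner(\cdot)$ for $i=4,\dots,7$, using the explicit forms $\widetilde\varphi = \varphi - 7\vol_3 = e^a\w\omega_a + (\vol_3 - 7\vol_3)$, $\Omega = e^{45}+e^{67}$, and $C(x) = x\lrcorner(4\vol_4 - \psi)$. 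Once these contractions are in hand, each of the five pairings is a finite sum of inner products of explicit $2$-forms on $\m_4$ (or $1$-forms), and the answers fall out by repeatedly using $\omega_a\w\omega_b = -2\delta_{ab}\vol_4$, $\Omega\w\Omega = 2\vol_4$, $\omega_a\w\Omega = 0$, and the endomorphism relations $I_1I_2 = -I_3$ etc.\ together with $JI_a = I_aJ$ (or its sign, to be checked against the convention $g(J\cdot,\cdot) = \Omega(\cdot,\cdot)$).

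First I would dispose of the pure pieces. For $p(\widetilde\varphi,\widetilde\varphi)$: contract $\widetilde\varphi$ with $e_a\in\m_3$ to get $\omega_a + (\text{a }2\text{-form on }\m_3\text{ from }{-6}\vol_3)$, and with $e_i\in\m_4$ to get a sum $e^a\w(e_i\lrcorner\omega_a)$; squaring and tracing, the $\m_3$-contributions give $|\omega_a|^2$ plus the $\vol_3$-term contribution, which should assemble to $38$, and the $\m_4$-contributions to $3$. For $p(C(x),C(x))$, using $C(x) = x\lrcorner(4\vol_4 - \psi)$ and $\psi = \vol_4 - (e^{12}\w\omega_3 + e^{23}\w\omega_1 + e^{31}\w\omega_2)$, one gets from \eqref{par1} that $C(x) = 3\,x\lrcorner\vol_4 + e^{12}\w(x\lrcorner\omega_3) + \cdots$; contracting with $e_a\in\m_3$ picks out $\pm e^b\w(x\lrcorner\omega_c)$ terms whose squares sum to $2|x|^2$ on each of $e_1,e_2,e_3$, giving the $2|x|^2\id_3$ block, while contracting with $e_i\in\m_4$ and using $\omega_a(x,\cdot)$ identities yields the familiar rank-one correction $10(|x|^2\id_4 - x\otimes x)$ on $\m_4$. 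The cross term $p(\widetilde\varphi,y\w\Omega)$ lives in $\Sym^2\m_4$: contracting both with $e_i\in\m_4$ gives $g(e_i\lrcorner(e^a\w\omega_a),\, e_j\lrcorner(y\w\Omega))$, which reduces to $-\omega_a(y_a\,\cdot,\cdot)$ paired appropriately with $\Omega$, i.e.\ $-\Omega\circ I_y$ read as a symmetric tensor, $= -JI_y$; one must check the trace-free assertion, which holds because $\tr(JI_a) = -\omega_a(J\cdot,\cdot)$ traced $=0$ for each $a$.

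The two mixed-block terms $p(\widetilde\varphi,C(x))$ and $p(y\w\Omega,C(x))$ are the ones landing in $\m_3\odot\m_4$, and these I expect to be the main obstacle, since the symmetrisation means one must carefully match $g(e_a\lrcorner\gamma_1, e_i\lrcorner\gamma_2)$ against $g(e_i\lrcorner\gamma_1, e_a\lrcorner\gamma_2)$ and track signs through the $I_a$-multiplication table. For $p(\widetilde\varphi,C(x))$: $e_a\lrcorner\widetilde\varphi = \omega_a + \cdots$ and $e_i\lrcorner C(x)$ is a combination of $e^{bc}$ times scalars $\omega_d(x,e_i)$; pairing gives something proportional to $(I_ax)$ in the $e_a$-slot, and chasing the numerical coefficient — the $3x\lrcorner\vol_4$ part of $C(x)$ contributes nothing to this block while the $e^{12}\w(x\lrcorner\omega_3)$ etc.\ parts do — should produce $-4I_ax\odot e_a$. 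For $p(y\w\Omega,C(x))$: $e_a\lrcorner(y\w\Omega) = y_a\Omega$ and $e_i\lrcorner(y\w\Omega) = -y\w(e_i\lrcorner\Omega) = \pm y\,\Omega(e_i,\cdot)$-type terms; pairing with the corresponding contractions of $C(x)$ and using $\Omega$-orthogonality among the $\omega_a$ collapses everything to a multiple of $y\odot Jx$, and the coefficient $6$ comes from the $3\,x\lrcorner\vol_4$ piece paired with $\Omega$ via $\vol_4$. Throughout, the only real risks are sign conventions (orientation, the sign in $g(J\cdot,\cdot)=\Omega(\cdot,\cdot)$, and Bryant-vs-ours for $i$), so I would fix a single adapted frame at the outset and verify each identity on one or two basis inputs rather than in full generality, citing the linear-algebra identities of Section~\ref{alg} for the structural facts.
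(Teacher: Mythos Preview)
Your proposal is correct and follows exactly the approach the paper indicates: the paper's own proof is a single sentence stating that the lemma ``is by direct calculation using only the behaviour of the $3$-forms of type $\widetilde{\varphi}, y \wedge \Omega$ and $C(x)$ with respect to $\m=\m_3 \oplus \m_4$'', which is precisely the block-by-block contraction scheme you outline in detail. Your sample checks (e.g.\ $|e_a\lrcorner\widetilde\varphi|^2 = |\omega_a|^2 + 36 = 38$ and $|e_i\lrcorner\widetilde\varphi|^2 = 3$, and the $2|x|^2\id_3$ block from $|x\lrcorner\omega_b|^2 + |x\lrcorner\omega_c|^2$) are on target, and your caution about fixing sign conventions once and verifying on basis inputs is exactly the right discipline for this sort of computation.
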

The proof is by direct calculation using only the behaviour of the $3$-forms of type $\widetilde{\varphi}, y \w \Omega$ and 
$C(x)$ with respect to $\m=\m_3 \oplus \m_4$. The last piece of information needed in order to conclude is 
\begin{lema} \label{Lj}
The three endomorphisms corresponding to the summands of $A(\xi)$ are\\[-0.5ex]
\begin{equation*}
\begin{split}
&i^{-1}(C(x))  \;=\;  - \tfrac12   e_a \odot I_a x  \quad \in \m_3 \odot \m_4 \\
& i^{-1}(\tilde{\varphi})  \;=\;  -2\id_3+\tfrac{3}{2}\id_4\\    
& i^{-1}(y \w \Omega) \;=\; -\tfrac{1}{2}JI_y \quad \in \Sym_0^2\m_4 .
\end{split}
\end{equation*}
\end{lema}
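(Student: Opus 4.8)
\textbf{Proof plan for Lemma \ref{Lj}.}

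The plan is to compute each of the three endomorphisms by exploiting the uniqueness built into the isomorphism $i:\Sym^2_0\m\to\Lambda^3_{27}\m$ together with the defining relation \eqref{s-star}: for $S\in\Sym^2_0\m$ one has $i(S)\w(v_1\lrcorner\psi)\w v_2=2g(Sv_1,v_2)\vol$. Rather than inverting $i$ directly, I would produce a candidate symmetric endomorphism $S$ in each case, verify it is trace-free, and then check that $i(S)$ matches the given $3$-form; by the injectivity in Lemma \ref{injective} (applied as in Proposition \ref{pol}) this pins down $i^{-1}$ uniquely. Throughout I would work in the adapted frame and use the algebraic facts collected in Section \ref{obsp}: the endomorphisms $I_a$ and $J$ on $\m_4$, the relations $I_1I_2=-I_3$ etc., $I_aI_b=-\delta_{ab}\id_4+\varepsilon_{abc}I_c$, and the fact that $\omega_a,\Omega$ are anti-self-dual/self-dual with the stated wedge products.

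First I would treat $i^{-1}(\widetilde\varphi)$. Since $\widetilde\varphi=\varphi-7\vol_3=e^a\w\omega_a+(\vol_3-7\vol_3)$, it lies in $(\Lambda^1\m_3\otimes\Lambda^2\m_4)\oplus\Lambda^3\m_3$, which are both fixed by the obvious block-scalar endomorphisms. So I expect $i^{-1}(\widetilde\varphi)=c_3\id_3+c_4\id_4$ for constants $c_3,c_4$ with $3c_3+4c_4=0$ (trace-free). To fix the ratio I would apply \eqref{s-star} with $v_1,v_2$ both in $\m_3$ and then both in $\m_4$: the component $e^a\w\omega_a$ contributes only through the $\m_4$ directions and $\vol_3$ only through $\m_3$, giving two linear conditions that force $c_3=-2$, $c_4=\tfrac32$. (As a sanity check, $i(\varphi)$ should be $0$ since $\varphi\in\Lambda^3_1$, and indeed $i^{-1}$ of $\varphi=\widetilde\varphi+7\vol_3$ would have to be the identity up to scale, consistent with $\vol_3$ corresponding to $\id_3-\tfrac{\text{something}}{}$; I would use this to catch sign errors.)

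Next, for $i^{-1}(y\w\Omega)$ with $y\in\m_3$, the form $y\w\Omega\in\Lambda^1\m_3\otimes\Lambda^2\m_4$ suggests an endomorphism living in $\Sym^2_0\m_4$, built from $J$ and $I_y=y_aI_a$. Since $J$ and $I_a$ are skew while $JI_y$ is the composition of two anticommuting (for generic $y$ one checks $JI_a=I_aJ$ or $=-I_aJ$ depending on conventions — I would verify $JI_a=-I_aJ$ so that $JI_y$ is symmetric) skew maps, $JI_y$ is symmetric and trace-free on $\m_4$. I would then evaluate $i(-\tfrac12 JI_y)$ against $(v_1\lrcorner\psi)\w v_2$ using the explicit expansion $\psi=\vol_4-(e^{12}\w\omega_3+\cdots)$ and the multiplication table of the $I_a$, matching it to $y\w\Omega$. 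The coefficient $-\tfrac12$ is then forced.

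Finally, $i^{-1}(C(x))$ with $x\in\m_4$: from \eqref{par1}, $C(x)=x\lrcorner(4\vol_4-\psi)$, which lies in $\Lambda^3\m_4\oplus(\Lambda^2\m_3\otimes\Lambda^2\m_4)$ and should correspond to a mixed endomorphism in $\m_3\odot\m_4$ of the form $\sum_a e_a\odot(L_a x)$ for some operators $L_a$ on $\m_4$; symmetry of the map $x\mapsto C(x)$ and equivariance under the residual $\SU(2)$ acting on $\m_3$ point to $L_a=cI_a$. Plugging $S=-\tfrac12 e_a\odot I_a x$ into \eqref{s-star} with $v_1\in\m_3,v_2\in\m_4$ (and vice versa) and comparing with $C(x)\w(v_1\lrcorner\psi)\w v_2$ computed termwise from \eqref{par1} determines the constant $-\tfrac12$.

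The main obstacle I anticipate is bookkeeping: getting every sign right in the contractions $x\lrcorner\omega_a$, $x\lrcorner\vol_4$, and in the products $I_aI_b$, especially since the orientation/sign conventions here differ from Bryant's. I would mitigate this by cross-checking against Lemma \ref{Lp} — for instance the entries $p(\widetilde\varphi,y\w\Omega)=-JI_y$ and $i^{-1}(y\w\Omega)=-\tfrac12 JI_y$ should be mutually consistent under the polarization identity $\langle b_2(\star\beta_1,\star\beta_2),i(S_3)\rangle=\langle(S_3)_\ast\beta_1,\beta_2\rangle+\langle(S_3)_\ast\beta_2,\beta_1\rangle$ from Remark \ref{rmks1}(iii) — and by verifying the trace-free condition numerically on each candidate before invoking injectivity.
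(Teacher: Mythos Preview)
Your plan is correct and is exactly what the paper does: it says the proof is ``routine, though lengthy'' and ``based on the transformation formula \eqref{s-star}'', with the alternative of computing $i$ forward on decomposable elements of $\Sym^2_0\m$ --- precisely your guess-and-verify strategy. One small caution: since $\Omega$ is self-dual and the $\omega_a$ are anti-self-dual, $J$ and $I_a$ \emph{commute} (not anticommute), and it is commutation of two skew maps that makes $JI_y$ symmetric; you will catch this immediately when you write out $JI_1e_4$ etc., but be aware so the sign check goes smoothly.
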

The routine, though lengthy proof is based on the transformation formula \eqref{s-star} which allows computing $i^{-1}$. Alternatively these facts can be checked by writing down the action of $i$ on decomposable elements in $\Sym^2_0\m$. 

Further on, we write $A(\underline{\xi})=s\widetilde{\varphi}+y \w \Omega+C(x)$ and compute all terms present in the polynomial $P(\underline{\xi})$
by taking the scalar product of the quantities computed in Lemma \ref{Lj} and \ref{Lp}. The scalar product on symmetric tensors in  $\Sym^2(\m)$  is defined here by $\langle S_1,S_2\rangle=\tr(S_1S_2)$. Pure type considerations 
w.r.t. the splitting $\Sym^2\m=\Sym^2 \m_3 \oplus \Sym^2 \m_4 \oplus (\m_3 \odot \m_4)$ show that 
\begin{equation*}
\begin{split}
&\langle p(y \wedge \Omega, y \wedge \Omega), i^{-1}(A(\underline{\xi}))\rangle  \;=\; 2s\vert y\vert^2  \\
&\langle p(y \wedge \Omega, C(x)),  i^{-1}(A(\underline{\xi}))\rangle \;=\;- \tfrac32g(Jx,I_yx)  \\
& \langle p(C(x), C(x)),  i^{-1}(A(\underline{\xi}))\rangle \;=\; 33s\vert x\vert^2-5g(Jx,I_yx) \\
&\langle p(\widetilde{\varphi}, \widetilde{\varphi}),  i^{-1}(A(\underline{\xi}))\rangle \;=\; -210 s   \\
&\langle p(\widetilde{\varphi},y \wedge \Omega),  i^{-1}(A(\underline{\xi}))\rangle  \; =\; \quad 2\vert y \vert^2  \\
& \langle p(\widetilde{\varphi}, C(x)),  i^{-1}(A(\underline{\xi}))\rangle \;=  \quad 3 \vert x \vert^2.
\end{split}
\end{equation*}
\medskip
Plugging these relations into the expression for $P(\underline{\xi})$ leads to 
\begin{equation*}
\langle p(A(\underline{\xi}), A(\underline{\xi})), i^{-1}(A(\underline{\xi}))\rangle \;=\; -210s^3  \,+\, s(39\vert x \vert^2  \,+\,   6\vert y \vert^2 ) \,-\, 8 R(\xi)
\end{equation*}
where  $R(\xi) := g(Jx,I_yx)$. Reverting to the original parametrisation of $\xi$ via the transformations $y \mapsto -\frac{5y}{3}$ and $x \mapsto \frac{\sqrt{5}}{3\sqrt{2}}x$ yields 
\begin{equation*}
P(\xi) \;=\; 210s^3  \,+\,   \tfrac{65}{6} s \vert x \vert^2  \,+\,   \tfrac{50}{3} s \vert y \vert^2 \,+\,
 \tfrac{100}{27}R(\xi).
\end{equation*}

This expression can be refined as follows in order to facilitate the computation of $\langle P, i\det \rangle$. In the current complex notation further calculation shows  that $R(\xi)$ reads
\bea
R(\xi ) &=&  \tfrac{v_1-v_2}{2}(x_3^2 + x_4^2 - x_5^2 - x_6^2) - 2x_1( -x_3x_6 +x_4x_5)  + 2x_2( x_3x_5 +x_4x_6) \\[.5ex]
&=&  \tfrac{v_1-v_2}{2} (|z_2|^2 - |z_1|^2) \,+\,\, 2 i \Re(z_1z_2z_3) .
\eea

Similarly, we also  have
$$
|y|^2 \;=\; \tfrac{(v_1 - v_2)^2}{4} \,+\, |z_3|^2  \qquad\mbox{and}\qquad |x|^2 \;=\; |z_1|^2 \,+\, |z_2|^2 \ 
$$
as well as 
$$
i\det(\xi)\; =\; v_1v_2v_3 \,+\, 2i\Re(z_1z_2z_3) \,-\,  (v_1|z_1|^2 + v_2|z_2|^2 \,+\, v_3|z_3|^2) .
$$

\medskip

As explained above there remains to compute the scalar product of the two cubic polynomials $P$ and $i\det$ considered as elements in $ \Sym^3(\su(3))$.
We interpret the coordinates $v_1, v_2 , v_3=-v_1-v_2, z_1,z_2,z_3$ as linear forms on $\su(3)$. The scalar product in use on $\su(3)^*$ is induced by the trace form $b(\xi, \xi) = -\tfrac12 \tr(\xi^2)$. It is completely determined from
$
\la v_a, v_a \ra = \tfrac43, \la v_a, v_b \ra = -\tfrac23
$
for $a\neq b$ and
$
\la z_j, \bar z_k\ra = 2 \delta_{jk}
$.
Then the scalar product on monomials, i.e. elements of $ \Sym^3(\su(3)^\ast)$, is computed with the help of  permanents.
Recall that the permanent of a matrix $A=(a_{ij})$ is defined similarly to the determinant by the formula 
$\, \mathrm{perm}\,A = \sum_{\sigma \in S_n} \prod_{i} a_{i,\sigma(i)}$. Permanents help make explicit the scalar product on 
$\Sym^k(\su(3)^\ast)$ induced from the scalar product $\la \cdot, \cdot \ra$ on $\su(3)^\ast$ via 
$
\la a_1 \ldots  a_k, b_1 \ldots b_k \ra  = \mathrm{perm}\, (\la a_i, b_j\ra)
$, 
where $a_1 \ldots  a_k$ and $b_1 \ldots b_k$ are monomials in the coordinate functions. 
A straightforward  calculations then gives
$$
\la s^3, i \det  \ra = -\tfrac49, \;  \; \la s |x|^2 , i \det  \ra = -\tfrac83 , \;      \;    \la s|y|^2, i \det  \ra = 4   , \; \;\la R, i \det \ra = 24 \ .
$$
Collecting all summands we finally obtain
$$
\la P,  i \det \ra \;=\; 210 (-\tfrac49)  \,+\, \tfrac{65}{6} (-\tfrac83)  \, +  \, \tfrac{50}{3} 4  \, + \, \tfrac{100}{27} 24 \;=\;  \tfrac{100}{3} \:\neq\; 0 .
 $$
This proves Theorem \ref{main1}.

\medskip
{\it{Acknowledgements.}} It is a pleasure to thank G.Weingart for his input on some of the computations in the last section.

\end{document}